\newcommand{\QQ}{\mathbb{Q}}
\newcommand{\Qp}{\QQ_p}
\newcommand{\Zp}{\mathbb{Z}_p}
\newcommand{\ZZ}{\mathbb{Z}}
\newcommand{\WW}{\mathcal{W}}
\newcommand{\defeq}{\colonequals}
\newcommand{\Iw}{\mathrm{Iw}}
\DeclareMathOperator{\Gal}{Gal}
\numberwithin{equation}{subsection}
\newtheorem{prop}[equation]{Proposition}
\newtheorem{theorem}[equation]{Theorem}
\newtheorem{proposition}[equation]{Proposition}
\newtheorem{lemma}[equation]{Lemma}
\newtheorem{corollary}[equation]{Corollary}
\newtheorem*{expectation*}{Expectation}
\newtheorem*{conjecture*}{Conjecture}
\newtheorem{theoremx}{Theorem}
\theoremstyle{definition}
\newtheorem{defn}[equation]{Definition}
\newtheorem{definition}[equation]{Definition}
\theoremstyle{remark}
\newtheorem{remark}[equation]{Remark}
\DeclareFontFamily{U}{wncy}{}
\DeclareFontShape{U}{wncy}{m}{n}{<->wncyr10}{}
\DeclareSymbolFont{mcy}{U}{wncy}{m}{n}
\DeclareMathSymbol{\sha}{\mathord}{mcy}{"58}
\newcommand{\ide}[1]{\mathfrak{#1}}
\newcommand{\mbb}[1]{\mathbb{#1}}
\newcommand{\exactseq}[3]{0 \longrightarrow #1 \longrightarrow #2 \longrightarrow #3 \longrightarrow 0}
\newcommand{\cohom}[3]{\mathrm{H}^{#1}(#2, #3)}
\newcommand{\ordd}{\mathcal{O}}
\newcommand{\rcont}[2]{\mathbf{R}\Gamma_{\mathrm{cont}}(#1, #2)}
\newcommand{\selcom}[3]{\mathbf{R}\widetilde{\Gamma}(#1, #2; #3)}
\newcommand{\rcontl}[1]{\rcont{G_v}{#1}}
\newcommand{\selcomo}[1]{\selcom{G_{\Sigma}}{#1}{\Delta}}
\newcommand{\selcomof}[1]{\mathbf{R}\widetilde{\Gamma}_f(G_{\Sigma}, #1)}
\newcommand{\invs}[1]{\mathcal{#1}}
\newcommand{\esel}[2]{\widetilde{H}^{#1}(G_{\Sigma}, #2; \Delta)}
\newcommand{\eseldel}[3]{\widetilde{H}^{#1}(G_{\Sigma}, #2; #3)}
\newcommand{\selmergrp}[2]{\widetilde{\opn{H}}_f^{#1}(\mbb{Q}, #2)}
\newcommand{\selmergrprel}[2]{\widetilde{\opn{H}}_{\mathrm{rel}}^{#1}(\mbb{Q}, #2)}
\newcommand{\selmergrpstr}[2]{\widetilde{\opn{H}}_{\mathrm{str}}^{#1}(\mbb{Q}, #2)}
\newcommand{\rankiw}[2]{_c \invs{RI}_{#1, #2, 1}^{[j]}}
\newcommand{\beilf}[1]{_c \invs{BF}_{#1, 1}^{[\invs{U}_1, \invs{U}_2, j]}}
\newcommand{\beilfj}[2]{_c \invs{BF}_{#1, 1}^{[\invs{F}, #2, j]}}
\newcommand{\beilfd}[2]{_c \invs{BF}_{#1, 1}^{[\invs{F}, #2]}}
\newcommand{\opn}[1]{\operatorname{#1}}
\newcommand{\hatot}{\hat{\otimes}}
\newcommand{\bdrig}{\mathbf{B}^{\dagger}_{\mathrm{rig}, \mbb{Q}_p}}
\newcommand{\bdrigE}{\mathbf{B}^{\dagger}_{\mathrm{rig}, \mbb{Q}_p} \hat{\otimes} E}
\newcommand{\bdrigA}[1]{\mathbf{B}^{\dagger}_{\mathrm{rig}, \mbb{Q}_p} \hat{\otimes} #1}
\newcommand{\bprig}{\mathbf{B}^{+}_{\mathrm{rig}, \mbb{Q}_p}}
\newcommand{\bdr}{\mathbf{B}_{\mathrm{dR}}}
\newcommand{\bcris}{\mathbf{B}_{\mathrm{cris}}}
\newcommand{\ddrig}[1]{\mathbf{D}^{\dagger}_{\mathrm{rig}}(#1)}
\newcommand{\nrig}[1]{\mathbf{N}_{\mathrm{rig}}(#1)}
\newcommand{\Cris}[1]{#1_{\mathrm{cris}}}
\newcommand{\derham}[1]{#1_{\mathrm{dR}}}
\newcommand{\F}[2]{\mathscr{F}^{#1}#2}
\newcommand{\Ff}[3]{\mathscr{F}^{#1 #2}#3}
\newcommand{\mbf}[1]{\mathbf{#1}}
\newcommand{\tbyt}[4]{\left( \begin{array}{cc} #1 & #2 \\ #3 & #4 \end{array} \right)}
\newcommand{\Addresses}{{
  \bigskip
  \footnotesize

  (Graham) \textsc{Department of Mathematics, South Kensington Campus, Imperial College London, London SW7 2AZ, UK}\par\nopagebreak
  \textit{E-mail address}: \texttt{a.graham17@imperial.ac.uk}

  \medskip

  (Gulotta) \textsc{Mathematical Institute, University of Oxford, Oxford OX2 6GG, UK}\par\nopagebreak
  \textit{E-mail address}: \texttt{Daniel.Gulotta@maths.ox.ac.uk}

  \medskip

  (Xu) \textsc{Department of Mathematics, Harvard University, 1 Oxford Street, Cambridge, MA 02138, U.S.A.}\par\nopagebreak
  \textit{E-mail address}: \texttt{yujiex@math.harvard.edu}

}}
\DeclareMathOperator{\Hom}{Hom}
\DeclareMathOperator{\TSym}{TSym}
\newcommand{\db}[1]{\left\llbracket #1 \right\rrbracket}
\newcommand{\FF}{\mathcal{F}}
\newcommand{\GG}{\mathcal{G}}
\newcommand{\OO}{\mathcal{O}}
\newcommand{\GL}{\mathrm{GL}}
\newcommand{\SH}{\mathscr{H}}
\newcommand{\UU}{\mathcal{U}}
\newcommand{\DD}{\mathcal{D}}
\newcommand{\et}{\mathrm{\acute{e}t}}
\newcommand{\pgp}{$(\varphi,\Gamma)$}
\newcommand{\EE}{\mathcal{E}}
\newcommand{\RR}{\mathbb{R}}
\newcommand{\sep}{\mathrm{sep}}
\title{Bounding Selmer groups for the Rankin--Selberg convolution of Coleman families}
\author{Andrew Graham, Daniel R. Gulotta, and Yujie Xu}
\date{}
\begin{document}
\begin{abstract}
Let $f$ and $g$ be two cuspidal modular forms and let $\FF$ be a Coleman family passing through $f$, defined over an open affinoid subdomain $V$ of weight space $\WW$. Using ideas of Pottharst, under certain hypotheses on $f$ and $g$ we construct a coherent sheaf over $V \times \WW$ which interpolates the Bloch-Kato Selmer group of the Rankin-Selberg convolution of two modular forms in the critical range (i.e the range where the $p$-adic $L$-function $L_p$ interpolates critical values of the global $L$-function). We show that the support of this sheaf is contained in the vanishing locus of $L_p$. 
\end{abstract}
\maketitle

\tableofcontents


\section{Introduction}

In \cite{BeilinsonFlach} (and more generally \cite{KLZ-modular}) Kings, Lei, Loeffler and Zerbes construct an Euler system for the Galois representation attached to the convolution of two modular forms. This Euler system is constructed from \emph{Beilinson--Flach classes}, which are norm-compatible classes in the (absolute) \'{e}tale cohomology of the fibre product of two modular curves. It turns out that these Euler system classes exist in families, in the sense that there exist classes 
\[
{_c\invs{BF}_{m, 1}^{[\mathcal{F}, \mathcal{G}]}} \in \opn{H}^1\left( \mbb{Q}(\mu_m), D^{\mathrm{la}}(\Gamma, M(\mathcal{F})^* \hatot M(\mathcal{G})^* ) \right)
\]
which specialise to the Beilinson--Flach Euler system at classical points. Here $\mathcal{F}$ and $\mathcal{G}$ are Coleman families with associated Galois representations $M(\mathcal{F})$ and $M(\mathcal{G})$ respectively, and $D^{\mathrm{la}}(\Gamma, -)$ denotes the space of locally analytic distributions on $\Gamma \cong \mbb{Z}_p^{\times}$.

The above classes are constructed in \cite{lz-coleman} and shown to satisfy an ``explicit reciprocity law'' relating the bottom class ($m=1$) to the three variable $p$-adic $L$-function constructed by Urban \cite{Urban}. This relation can then be used to prove instances of the Bloch--Kato conjecture for the Galois representation attached to the convolution of two modular forms (including the case of an elliptic curve twisted by an Artin representation).

Building on the work of Nekov\'{a}\v{r}, Pottharst \cite{pottharst-selmer} describes how one can put the Bloch--Kato Selmer group of a Galois representation into a family. More precisely, given a family of $G_{\mbb{Q}}$-representations over a rigid analytic space $X$, he constructs a coherent sheaf $\invs{S}$ on $X$ which specialises to the Bloch-Kato Selmer group at certain ``crystalline'' points of $X$, i.e. points where the Galois representation is crystalline at $p$. This gives rise to the natural question:
\begin{itemize}
\item Do the Beilinson--Flach classes ${_c\invs{BF}_{m, 1}^{[\mathcal{F}, \mathcal{G}]}}$ (and hence the three-variable $p$-adic $L$-function) control the behaviour of $\invs{S}$?
\end{itemize}
In this paper, we provide a partial answer to this question.

\subsection{Summary of results} \label{SummaryOfResults}

Fix an odd prime $p \geq 5$. To explain the results we introduce the following notation. Let $N \geq 1$ be an integer prime to $p$ and let $f$ and $g$ be two normalised cuspidal newforms of levels $\Gamma_1(N_1)$ and $\Gamma_1(N_2)$ and weights $k+2$ and $k'+2$ respectively, such that $N_1, N_2$ both divide $N$ and $k', k \geq 0$. 

We assume that $k \neq k'$, and that one of the $p$-stabilisations of $f$ and \emph{both} of the $p$-stabilisations of $g$ are noble (Definition \ref{DefinitionOfNoble}). This implies that all three of these modular forms can be put into Coleman families. We denote the weight space parameterising all continuous characters $\mbb{Z}_p^{\times} \to \mbb{C}_p^{\times}$ by $\invs{W}$ and for an integer $i$, we denote the character $x \mapsto x^i$ simply by $i$.

Let $E$ be a $p$-adic field and $\invs{F}$ and $\invs{G}$ two Coleman families over affinoid domains $V_1 \subset \invs{W}_E$ and $V_2 \subset \invs{W}_E$ passing through $p$-stabilisations of $f$ and $g$ respectively (see Definition \ref{WW_E} for the definition of $\WW_E$). We impose the following hypotheses on $f$ and $g$:
\begin{enumerate}
    \item The image of the Galois representation attached to the convolution of $f$ and $g$ is \emph{big} (see (BI) \S \ref{AVanishingResult}).
    \item The inertia invariants at ramified primes of the Galois representation attached to the convolution of $f$ and $g$ is free (flatness of inertia, see \S \ref{SelmerSheafAssumptions}).
    \item $f$ and $g$ are not congruent modulo $p$ to forms of a lower level (minimally ramified, see \S \ref{SelmerSheafAssumptions}).
    \item The $p$-adic $L$-function attached to the convolution of $f$ and $g$ does not have a trivial zero, which is a condition on the Fourier coefficients of $f$ and $g$ (condition (NLZ) for the point $\mathbf{x}$ corresponding to $f$ and $g$ in \S \ref{AVanishingResult}).  
\end{enumerate}
Under these hypotheses, there exists a coherent analytic sheaf $\invs{S}$ on $X:= V_1 \times V_2 \times \invs{W}$, such that for all $\mathbf{x} = (k_1, k_2, j) \in X$ with $k_1, k_2, j$ integers and $1 \leq k_2+1 \leq j \leq k_1$, the specialisation of $\invs{S}$ satisfies 
\[
\invs{S}_{\mathbf{x}} \cong \opn{H}^1_f(\mbb{Q}, [M(\invs{F}_{k_1}) \otimes M(\invs{G}_{k_2})](1+j))^*
\]
where $M(-)$ denotes the Galois representation attached to a modular form (in the sense of Deligne) and the right-hand side is the (dual of the) Bloch--Kato Selmer group. We recall the construction of this sheaf in section \ref{selmer sheaf} following \cite[\S 3.4]{pottharst-selmer}; the construction relies on the machinery of Selmer complexes developed by Nekov\'{a}\v{r} \cite{nekovar-selmer} and Pottharst \cite{pottharst-selmer}.

To be more precise, one can construct a family $\overline{D}$ of overconvergent $(\varphi, \Gamma)$-modules corresponding to the representation $\overline{M} := [M(\invs{F})^* \hatot M(\invs{G})^*](-\mathbf{j})$, where $-\mathbf{j}$ denotes the twist by the inverse of the universal character of $\invs{W}$, and it is shown in \cite{liu} that this family has a canonical triangulation (provided that $V_1$ and $V_2$ are small enough). In section \ref{selmer sheaf}, we define a Selmer complex with unramified local conditions away from $p$, and at $p$ we choose local conditions defined by the cohomology of a family $\overline{D}^+$ of two-dimensional sub $(\varphi, \Gamma)$-modules appearing in the triangulation of $\overline{D}$; at classical weights, the local condition at $p$ specialises to a so-called \emph{Panchishkin submodule}, i.e. the Hodge-Tate weights for $\overline{D}_{\mathbf{x}}^+$ (resp. $\overline{D}_{\mathbf{x}}/\overline{D}_{\mathbf{x}}^+$) are positive (resp. non-positive). Then, under some very mild conditions, this local condition corresponds to the Bloch--Kato local condition for the specialisation of the representation $\overline{M}$. We define $\invs{S}$ to be $\opn{H}^2$ of this Selmer complex.

In \cite{Urban} (and \cite[Appendix II]{AIFamilies}) Urban constructs a three variable $p$-adic $L$-function, denoted $L_p$, associated to $\invs{F}$ and $\invs{G}$ over $X:= V_1 \times V_2 \times \invs{W}$. This $p$-adic $L$-function is constructed via the theory of families of nearly overconvergent modular forms and interpolates the critical values of the Rankin--Selberg $L$-function at classical specialisations. We recall the interpolation property of $L_p$ in section \ref{ThreeVarSection}. In analogy with the Bloch--Kato conjecture -- which predicts that the Bloch--Kato Selmer group is controlled by the $L$-function for the corresponding representation -- we expect that the sheaf $\invs{S}$ is controlled by the three variable $p$-adic $L$-function. 

More precisely, the ring $\mathcal{O}(X)$ is a disjoint union of $p-1$ integral domains indexed by characters $\eta$ of the group $\left( \mbb{Z}/p\mbb{Z} \right)^{\times}$ (each of which correspond to an irreducible component of $\mathcal{W}$). For each character $\eta$, let $e_{\eta}$ denote the corresponding idempotent of $\ordd(X)$ projecting to the domain indexed by $\eta$. Since $X$ is quasi-Stein, a coherent sheaf on $X$ is determined by its global sections, so we will pass between these two perspectives freely. We expect the analogue of \cite[Theorem 11.6.4]{KLZ2015} to hold in our situation, namely
\begin{conjecture*}
Suppose that $e_{\eta} \cdot L_p \neq 0$. Under the hypotheses on $f$ and $g$ above, we expect that:
\begin{itemize}
    \item $e_{\eta} \cdot \mathcal{S}$ is a torsion $e_{\eta} \cdot \mathcal{O}(X)$-module.
    \item The $0$th Fitting ideal 
    \[
    \opn{Fitt}_0(e_{\eta} \cdot \invs{S})
    \]
    divides the ideal sheaf generated by the $p$-adic $L$-function $e_{\eta} \cdot L_p$.
\end{itemize}
\end{conjecture*}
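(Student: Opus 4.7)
The plan is to adapt the Euler system machinery of Kato and Rubin (in the families formulation of \cite{KLZ2015,lz-coleman}) to the three-variable setting considered here. The key input is the family of Beilinson--Flach classes ${_c\invs{BF}_{m, 1}^{[\invs{F}, \invs{G}]}}$ for squarefree $m$ coprime to $Np$, together with the explicit reciprocity law of Loeffler--Zerbes which identifies the image of the $m=1$ class under the Perrin--Riou-type regulator map (obtained from the Coleman map associated with the canonical triangulation $\overline{D}^+$ of $\overline{D}$) with the three-variable $p$-adic $L$-function $L_p$, up to explicit Euler and $\Gamma$-factors.

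To establish the first bullet (torsion of $e_\eta \cdot \invs{S}$), I would first use the Selmer complex formalism to identify $\invs{S}$ with $\opn{H}^2$ of the Selmer complex whose local condition at $p$ is cut out by $\overline{D}^+$. Poitou--Tate global duality, in the derived-category formulation of Pottharst \cite{pottharst-selmer}, relates this (up to torsion-controlled contributions from local cohomology) to the $\opn{H}^1$ of the dually-linked Selmer complex with local conditions interchanged. Under the hypothesis $e_\eta \cdot L_p \neq 0$, the reciprocity law implies that the localisation of $e_\eta \cdot {_c\invs{BF}_{1,1}^{[\invs{F},\invs{G}]}}$ at $p$ modulo $\overline{D}^+$ is non-zero as a section of the appropriate local cohomology sheaf. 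Combined with the Euler system norm relations for varying $m$, a standard Kolyvagin-type argument then bounds the relaxed-at-$p$ Selmer group along the support of $e_\eta \cdot L_p$, which via duality forces $\opn{supp}(e_\eta \cdot \invs{S}) \subset V(e_\eta \cdot L_p)$. Since $V(e_\eta \cdot L_p)$ is a proper Zariski-closed subset of the irreducible component $\opn{Spec}(e_\eta \cdot \ordd(X))$, this gives torsion.

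For the second bullet (Fitting ideal divisibility) one needs a genuinely quantitative Euler system argument. The plan is to construct Kolyvagin derivative classes attached to squarefree products of Kolyvagin primes $\ell$ at which the residual representation of $M(\invs{F}) \hatot M(\invs{G})$ satisfies the usual good conditions -- this is where big image (BI) and flatness of inertia are used crucially, to guarantee the existence of enough such primes and to kill spurious local contributions. One then runs the derivative argument over the three-dimensional base $X$, obtaining a bound on $\opn{Fitt}_0$ of the dual Selmer group in terms of the divisor generated by the image of the Euler system under the Perrin--Riou regulator; via the reciprocity law this divisor is $e_\eta \cdot L_p$. Condition (NLZ) enters at this step to ensure the Euler factor at $p$ does not create a trivial zero that would degrade the divisibility.

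The main obstacle is that $e_\eta \cdot \ordd(X)$ is a Noetherian regular domain of dimension three but neither a PID nor generally a UFD, so the classical Rubin/Kolyvagin machinery (which works cleanly over DVRs or two-variable Iwasawa algebras) must be extended to higher-dimensional bases. Worse, the natural coefficient module $D^{\la}(\Gamma, M(\invs{F})^* \hatot M(\invs{G})^*)$ is a Fr\'{e}chet-type distribution module rather than a finitely generated $\ordd(X)$-module, so genuine control of integral structures on the Beilinson--Flach classes requires the slope-filtration and density-of-classical-points techniques of \cite{lz-coleman} to be pushed significantly further. For this reason I expect only the support/torsion statement (the first bullet) to be within reach by the methods of the present paper, while the sharper Fitting ideal divisibility remains genuinely conjectural.
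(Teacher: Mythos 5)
First, a point of framing: the statement you are proving is stated in the paper only as a conjecture (modelled on Theorem 11.6.4 of \cite{KLZ2015}); the paper offers no proof of it, and explicitly proves only a partial result in its direction, namely Theorem \ref{SecondMainTheorem}, which controls the support of the slice $\invs{S}_{k'}$ obtained by specialising the \emph{second} variable at the classical weight $k'$. So there is no ``paper's proof'' for your proposal to match, and your own closing assessment that the Fitting-ideal divisibility remains genuinely conjectural is consistent with the paper. The problem is that your intermediate claim --- that the first bullet (torsionness of $e_{\eta}\cdot\invs{S}$ over the full three-variable base) ``is within reach by the methods of the present paper'' --- is exactly what the authors say they cannot do, and your sketch glosses over the step where the argument breaks.

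The gap is at the passage from a bound on the relaxed (or strict) Selmer group to a bound on $\invs{S}$ itself, whose local condition at $p$ is the Panchishkin condition cut out by $\overline{D}^+$. In the paper this passage (Propositions \ref{PropositionPT} and \ref{PropositionUnramified}, used in Theorem \ref{TheoremVanishing}) requires the map $\xi$ from the relaxed $\widetilde{\opn{H}}^1$ to $\opn{H}^1(\mbb{Q}_p, D^-_{\mathbf{x}})$ to be \emph{surjective}, and $\opn{H}^1(\mbb{Q}_p, D^-_{\mathbf{x}})$ is two-dimensional; a single Euler system supplies at most one non-trivial class there, so one needs \emph{two} linearly independent families of Beilinson--Flach classes, coming from the two noble $p$-stabilisations $\invs{G}_{\alpha}$, $\invs{G}_{\beta}$ of $g$, whose localisations are separated by the Perrin-Riou logarithm (Proposition \ref{proplinearindependence}). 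This device exists only when the second variable is specialised at a classical weight, since for non-classical $k_2$ the specialisation $\invs{G}_{k_2}$ is the unique point of the eigencurve with that Galois representation; this is precisely why the paper cannot prove a three-variable statement and restricts to the slice at $k'$. Your duality step (``via duality forces $\opn{supp}(e_{\eta}\cdot\invs{S})\subset V(e_{\eta}\cdot L_p)$'') silently assumes this surjectivity over the whole of $X$, so the torsion claim does not follow from the argument as written. A secondary issue: the paper runs the Euler system machine (Rubin's theorem) only after specialising at a point $\mathbf{x}$ and choosing a Galois-stable lattice over a discrete valuation ring; your plan to run Kolyvagin derivatives directly over the three-dimensional base $e_{\eta}\cdot\ordd(X)$, or even ``along the support of $e_{\eta}\cdot L_p$'', is not something the existing technology provides, as you partly acknowledge for the second bullet but would equally need for the first.
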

Note that the factor $\Omega$ appearing in \emph{loc.cit.} is unnecessary for our formulation since it is invertible in $\mathcal{O}(X)$. A particular case of this conjecture is that the support of the sheaf is contained in the vanishing locus of the $p$-adic $L$-function. We prove a partial result in this direction.

\begin{theoremx} \label{SecondMainTheorem}
Let $\invs{S}_{k'}$ denote the specialisation of the above sheaf at $k'$ in the second variable. If $V_1$ is small enough and the above hypotheses hold for $f$ and $g$, then 
\[
\opn{supp}\invs{S}_{k'} \subset \{\mathbf{x} \in V_1 \times \{k'\} \times \invs{W} : L_p(\mathbf{x}) = 0 \}
\] 
where ``$\, \opn{supp}$'' denotes the support of a sheaf. 
\end{theoremx}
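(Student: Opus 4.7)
The plan is to bound $\mathcal{S}_{k'}$ by running an Euler system argument in families, using the Beilinson--Flach classes $_c\mathcal{BF}_{m,1}^{[\mathcal{F},g]}$ obtained from the two-variable family classes of \cite{lz-coleman} by specialising the second Coleman family $\mathcal{G}$ at $k'$, and using the explicit reciprocity law to relate the bottom class ($m=1$) to the restriction of $L_p$ to $V_1 \times \{k'\} \times \mathcal{W}$.

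First, I would restrict the Selmer complex of Section \ref{selmer sheaf} to $V_1 \times \{k'\} \times \mathcal{W}$, obtaining an Iwasawa-theoretic Selmer complex with local conditions at $p$ cut out by the Panchishkin sub $(\varphi,\Gamma)$-module $\overline{D}^+$ coming from the canonical triangulation in \cite{liu}. By construction, $\mathcal{S}_{k'}$ is $H^2$ of this complex, and at classical arithmetic points $\mathbf{x} = (k_1, k', j)$ in the critical range the stalks recover the dual Bloch--Kato Selmer groups. It suffices to show that at every point $\mathbf{x}$ where $L_p(\mathbf{x}) \neq 0$ this $H^2$ has zero stalk; equivalently, that the image of the ideal generated by $L_p$ in the stalk annihilates $\mathcal{S}_{k'}$.

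Second, I would invoke the explicit reciprocity law of \cite{lz-coleman} to identify the image of $_c\mathcal{BF}_{1,1}^{[\mathcal{F},g]}$ under the Perrin-Riou big logarithm attached to the quotient $\overline{D}/\overline{D}^+$ with $L_p|_{V_1 \times \{k'\} \times \mathcal{W}}$, up to the $c$-factor and interpolation factors that are units or controlled non-zero quantities under hypothesis (NLZ). Then, using big image (BI), flatness of inertia, and the minimally ramified hypothesis, I would apply the Euler system machine — in the rigid-analytic family formulation following Pottharst \cite{pottharst-selmer} and the strategy of \cite{KLZ2015} — to the full norm-compatible system $\{_c\mathcal{BF}_{m,1}^{[\mathcal{F},g]}\}$ to conclude that the characteristic ideal of $\mathcal{S}_{k'}$ (or at least its support, which is all Theorem \ref{SecondMainTheorem} asserts) is contained in the ideal generated by the image of the bottom class under the relevant local map at $p$. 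Combined with the reciprocity law this yields the claimed containment of supports; shrinking $V_1$ further if necessary ensures that Liu's triangulation and the Euler system estimates are uniform.

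The main obstacle is executing the Euler system bound uniformly over the rigid-analytic base $V_1 \times \{k'\} \times \mathcal{W}$ rather than a single crystalline point. In particular, one must verify that the big image and minimally ramified hypotheses, formulated at the point $\mathbf{x}$ attached to $(f,g)$, propagate to an open neighbourhood and interact correctly with the Panchishkin local condition $\overline{D}^+$, and that the hypothesis (NLZ) guarantees that the interpolation factors relating the bottom class to $L_p$ do not vanish generically and thus do not artificially shrink the support bound. The local-global comparison between the Bloch--Kato condition at classical points and the Panchishkin condition in the family, already used in defining $\mathcal{S}$, is what allows the pointwise conclusion to be read off from the family-level statement.
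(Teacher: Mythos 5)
There is a genuine gap, and it is precisely the point the paper flags as the crux of its argument. Your plan runs a \emph{single} Euler system $\{{_c\invs{BF}}_{m,1}^{[\invs{F},g]}\}$ (obtained by specialising one family $\invs{G}$ at $k'$) and feeds it into the Euler system machine to bound $\invs{S}_{k'}$. But an Euler system by itself only bounds the \emph{strict} (dually, relaxed) Selmer group: Rubin's theorem gives finiteness of $\widetilde{\opn{H}}^1_{\mathrm{str}}$, hence vanishing of $\selmergrprel{2}{\overline{M}_{\mathbf{x}}}$, and to transfer this to the Panchishkin group $\selmergrp{2}{\overline{M}_{\mathbf{x}}}$ (which is what $\invs{S}_{k'}$ interpolates) one must show that the comparison map $\xi\colon \selmergrprel{1}{\overline{M}_{\mathbf{x}}} \to \opn{H}^1(\mbb{Q}_p, D^-_{\mathbf{x}})$ is \emph{surjective} onto a target that is two-dimensional under (NLZ). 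A single bottom class, even when the reciprocity law and $L_p(\mathbf{x})\neq 0$ make its image non-zero, only produces a one-dimensional piece of the image. The paper's proof gets surjectivity by using \emph{both} noble $p$-stabilisations $g_\alpha, g_\beta$ of $g$: putting each into a Coleman family $\invs{G}_\alpha, \invs{G}_\beta$ yields two Beilinson--Flach classes whose images $\bar{c}_1^{\alpha}, \bar{c}_1^{\beta}$ in $\opn{H}^1(\mbb{Q}_p, D^-_{\mathbf{x}})$ are linearly independent when $L_p(\mathbf{x})\neq 0$ (Propositions \ref{proplinearindependence} and \ref{PropositionUnramified}), because they land in the two distinct $\varphi$-eigenlines of $\Cris{D}^-$ (here $\alpha_g\neq\beta_g$ and the classicality of $k'$ are essential — this is exactly why the result is only one-variable in the second weight). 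Without this second family your argument does not bound the Panchishkin condition and the theorem does not follow.

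A secondary issue: the ``Euler system machine in the rigid-analytic family formulation'' you invoke over $V_1\times\{k'\}\times\invs{W}$ is not available here. As noted after Proposition \ref{ExistenceOfESClasses}, the classes ${_c\invs{Z}}_m^{[\invs{F},\invs{G}]}$ cannot in general be placed in a Galois-stable lattice inside $D^{\mathrm{la}}(\Gamma, M)$, so there is no Euler system for the family itself; a family-level divisibility of the kind in \cite[Theorem 11.6.4]{KLZ2015} is exactly the Conjecture of the introduction, not the theorem. The actual proof is pointwise: for each $\mathbf{x}$ with $L_p(\mathbf{x})\neq 0$ one specialises, rescales by the constant of \cite[Prop.~2.4.7]{lz-coleman} to land in a lattice $\overline{T}_{\mathbf{x}}$, applies Rubin's theorem together with the $\xi$-surjectivity above to get $\selmergrp{2}{\overline{M}_{\mathbf{x}}} = 0$, and then uses the base-change statement (Theorem \ref{SheafTheorem}, via a Tor spectral sequence and vanishing of $\widetilde{H}^3_f$) plus Nakayama to conclude $\mathbf{x}\notin\opn{supp}\invs{S}_{k'}$. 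Your first paragraph does reduce correctly to a pointwise stalk statement, but the mechanism you then propose for proving it is the family-level one, which fails for the reasons above.
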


\begin{remark}
By Krull's principal ideal theorem, the vanishing locus of the three-variable $p$-adic $L$-function has codimension $\leq 1$ in $X$. Furthermore, since $L_p \neq 0$, there exists a character $\eta$ such that $e_{\eta} \cdot L_p \neq 0$. In this case, the vanishing locus of $e_{\eta} \cdot L_p$ has codimension one in $V_1 \times V_2 \times \mathcal{W}_{\eta}$, where $\mathcal{W}_{\eta}$ denotes the component of weight space indexed by $\eta$.
\end{remark}

To prove Theorem \ref{SecondMainTheorem}, we actually show that if $\mathbf{x}$ is a point in $V_1 \times \{k'\} \times \invs{W}$ and $L_p(\mathbf{x}) \neq 0$ then the group $\selmergrp{2}{\overline{M}_{\mathbf{x}}}$ vanishes. Here $\selmergrp{2}{\overline{M}_{\mathbf{x}}}$ is the cohomology in degree $2$ of a certain Selmer complex attached to the representation $\overline{M}_{\mathbf{x}}$ (see \S \ref{ConvolutionOfTwo}), which will be shown to coincide with the specialisation of $\invs{S}$ at $\mathbf{x}$. To show that this group vanishes, we generalise the proof of Theorem 8.2.1 in \cite{lz-coleman} to non-classical specialisations; this relies heavily on the theory of $(\varphi, \Gamma)$-modules and involves a careful analysis of the Perrin-Riou logarithm (see section \ref{prlog}).

Unfortunately, with the current methods we were unable to prove a three-variable version of this result. Indeed, a crucial step in the proof relies on the fact that $\invs{G}_{k'}$ is the $p$-stabilisation of a classical modular form of level $N_2$, whose other $p$-stabilisation is noble. By putting the other $p$-stabilisation into a Coleman family, we obtain two linearly independent Euler systems which can be used to bound the Bloch-Kato Selmer group, rather than just the strict Selmer group (this is also the technique used in the proof of \cite[Theorem 8.2.1]{lz-coleman}). For a general (non-classical) weight $k_2$, the specialisation $\invs{G}_{k_2}$ will be the unique point on the eigencurve with associated Galois representation $M(\invs{G}_{k_2})$, so the above strategy will not work.

\subsection{Notation}

Throughout the paper fix a prime $p \geq 5$. If $K$ is a field then we often denote its absolute Galois group by $G_K = \Gal(K^{\sep}/K)$, where $K^{\sep}$ denotes a fixed maximal separable closure of $K$. 

Let $R$ be a topological ring and $G$ a topological group. We say $M$ is a $G$-module over $R$ (or an $R[G]$-module) if $M$ is a continuous $R$-module equipped with a continuous homomorphism $\rho: G \to \opn{Aut}_RM$. We will often work within the category of $R[G]$-modules. This is not an abelian category in general, but it is additive and has kernels and cokernels, so we can still talk about its derived category. If $M$ is an $R[G]$-module and the action is commutative (i.e. the map $\rho$ factors through $G^{\opn{ab}}$), then we write $M^{\iota}$ to mean the module $M$ with the action given by $g \cdot m = \rho(g^{-1})m$ for all $g \in G$ and $m \in M$.

We will often take $R$ to be a $\mbb{Q}_p$-Banach algebra (or more generally, the global sections of a rigid analytic space). In this case we write $R^\circ$ for the subring of power-bounded elements. When $R$ is a reduced affinoid algebra, this coincides with the unit ball with respect to the supremum norm. 

For an $R[G]$-module $M$, let $M^* \defeq \opn{Hom}_{\text{cont}}(M, R)$ denote the dual representation of $M$ and, where appropriate, we write $M(n)$ to mean the representation $M$ tensored with the $n$-th Tate twist. We fix a compatible system of $p$-th power roots of unity in $\bar{\mbb{Q}}_p$, so in the case where $M$ is a Galois representation, $M(1)$ is just $M$ twisted by the cyclotomic character $\chi_{\text{cycl}}$. In this paper, the cyclotomic character will always have Hodge-Tate weight $1$. 

If $M$ is an $R[G]$-module, then we denote its $i$-th group cohomology by $\opn{H}^i(G, M)$. If $G_K$ is the absolute Galois group of a field $K$ then we will also sometimes write $\cohom{i}{K}{M}$ for $\opn{H}^i(G_K, M)$. 

When talking about left (resp. right) exact functors $F$, we write $\mathbf{R}F$ (resp. $\mathbf{L}F$) for the right (resp. left) derived functors of $F$. In particular, if $M$ is a $R[G]$-module then we write $\rcont{G}{M}$ for the image of the complex of continuous cochains of $M$ in the derived category of $R$-modules. 

If $X$ is an object defined over a ring $R$ and we have a homomorphism $R \to R'$, then we denote the base change of $X$ to $R'$ by $X_{R'}$.

For a positive integer $m$, we let $\mu_m^\circ$ denote the group scheme (over $\mbb{Q}$) of $m$-th roots of unity. 

Finally, we note that throughout the paper, any \'{e}tale cohomology group refers to continuous \'etale cohomology in the sense of Jannsen \cite{Jannsen1988}.

\subsection{Acknowledgements}
This paper grew out of our group project at the 2018 Arizona Winter School. We would like to thank David Loeffler and Sarah Zerbes for suggesting the project that led to the present article and for offering us their invaluable advice and guidance. We are grateful for their continued encouragements during the preparation of this paper. We also thank Rodolfo Venerucci for many helpful conversations and suggestions. Special thanks are also due to the organizers of the Arizona Winter School for making the winter school an enjoyable and fruitful experience. AG would like to thank Ashwin Iyengar and Pol van Hoften for their helpful comments and suggestions. The authors would like to thank the anonymous referees for their helpful suggestions. 

The Winter School was supported by NSF grant DMS-1504537 and by the Clay Mathematics Institute. AG was supported by the Engineering and Physical Sciences Research Council [EP/L015234/1], the EPSRC Centre for Doctoral Training in Geometry and Number Theory (The London School of Geometry and Number Theory), University College London and Imperial College London. DG was supported in part by Royal Society grant RP\textbackslash{}EA\textbackslash{}180020. YX was supported by Harvard University scholarships.


\section{Modular curves} \label{ModularCurves}

In this section we will define the modular curves that will be used throughout the paper. Let $\invs{H}^{\pm} \defeq \mbb{C} - \mbb{R}$ denote the upper and lower complex half-space and denote the finite adeles of $\mbb{Q}$ by $\mbb{A}_f$.  For a compact open subgroup $K \subset \opn{GL}_2(\mbb{A}_f)$, we let 
\begin{equation} \label{ShimuraVariety}
Y_K \defeq \opn{GL}_2(\mbb{Q}) \backslash \left[ \invs{H}^{\pm} \times \opn{GL}_2(\mbb{A}_f)/K \right] \,.
\end{equation}
We assume that $K$ is sufficiently small so that $Y_K$ has the structure of a
Shimura variety. This Shimura variety has a canonical model over $\mbb{Q}$ (which we will also denote by $Y_K$) and we will refer to this as the modular curve of level $K$. In this paper, we are interested in the following choices of $K$. 

Let $m, N$ be two positive integers such that $m(N+1) \geq 5$. Then the subgroup 
\[
K_{m, N} \defeq \left\{ \left( \begin{array}{cc} a & b \\ c & d \end{array} \right) \in \opn{GL}_2(\widehat{\mbb{Z}}) \left| \begin{array}{ccc} a \equiv 1, & b \equiv 0 & \mod{m\widehat{\mbb{Z}}} \\ c \equiv 0, & d \equiv 1 & \mod{mN\widehat{\mbb{Z}}} \end{array} \right. \right\}
\]
is sufficiently small and we denote the corresponding modular curve by $Y(m, mN) \defeq Y_{K_{m, N}}$. If $m=1$ we simply denote this curve by $Y_1(N)$. 
The modular curve $Y(m, mN)$ represents the contravariant functor taking a $\mbb{Q}$-scheme $S$ to the set of isomorphism classes of triples $(E, P, Q)$, where $E/S$ is an elliptic scheme, $P$ is a torsion section of order $m$ and $Q$ is a torsion section of order $mN$, such that $P$ and $Q$ are linearly independent, in the sense that the map $\mbb{Z}/m\mbb{Z} \times \mbb{Z}/mN\mbb{Z} \to E(S)$ given by $(a, b) \mapsto aP +bQ$ is injective. There is a natural morphism $Y(m, mN) \to \mu_m^\circ$ given by the Weil pairing on the points $P, NQ$, and the fibres of this map are smooth, geometrically connected curves.

For an integer $N \geq 1$ not divisible by $p$, we also set 
\[
K_1(N(p)) \defeq \left\{ \left( \begin{array}{cc} a & b \\ c & d \end{array} \right) \in \opn{GL}_2(\widehat{\mbb{Z}}) \left| c \equiv 0 \mod{pN \widehat{\mbb{Z}}}, \; \; \; d \equiv 1 \mod{N \widehat{\mbb{Z}}} \right. \right\}.
\] 
This is a sufficiently small subgroup and we denote the corresponding modular curve by $Y_1(N(p))$. This has a moduli interpretation as the contravariant functor taking a $\mbb{Q}$-scheme $S$ to the set of isomorphism classes of triples $(E, P, C)$, where $E/S$ is an elliptic scheme, $P$ is a torsion section of order $N$ and $C$ is a finite flat subgroup scheme of $E[p]$ (the $p$-torsion of $E$) of order $p$. 

It will also be useful to introduce several maps between these modular curves.
\begin{itemize}
\item For a positive integer $d$, we define the following map
\begin{equation} \label{alphad}
\Xi_d \colon Y(dm, dmN) \rightarrow Y(m, m N)
\end{equation}
as the morphism which sends a triple $(E, P, Q)$ to the triple $(E/\langle mP \rangle, P \! \! \mod{mP}, dQ \! \! \mod{mP})$, where $\langle mP \rangle$ denotes the cyclic subgroup generated by $mP$.
\item Recall $\mu_m^\circ$ denotes the group scheme (over $\mbb{Q}$) of $m$'th roots of unity. We define the following map 
\begin{equation} \label{teem}
t_m \colon Y(m, mN) \rightarrow Y_1(N) \times_{\mbb{Q}} \mu_m^\circ
\end{equation}
as the morphism given by $(E, P, Q) \mapsto ((E/\langle P \rangle, mQ \! \! \mod{P}), \langle P, NQ \rangle)$, where $\langle -, - \rangle$ denotes the Weil pairing on $E[m]$ and $\langle P \rangle$ is the subgroup generated by $P$. 
\item Let $N'$ be a positive integer dividing $N$. We define the following map 
\begin{equation} \label{esseN}
s_{N'} \colon Y_1(Np) \rightarrow Y_1(N'(p))
\end{equation}
to be the morphism sending $(E, Q)$ to $\left(E, \frac{Np}{N'} \cdot Q, \langle N \cdot Q \rangle \right)$, where $\langle N \cdot Q \rangle$ denotes the cyclic group scheme generated by the $p$-torsion section $N \cdot Q$. 
\end{itemize}
The first two maps are compatible in the following sense.

\begin{lemma} \label{CoresCompatibility}
Let $m, N$ be two positive integers with $m(N+1) \geq 5$ and let $d$ be a positive integer. Then we have the following commutative diagram
\[
\begin{tikzcd}
{Y(dm, dmN)} \arrow[d, "t_{dm}"'] \arrow[r, "\Xi_d"] & {Y(m, mN)} \arrow[d, "t_{m}"] \\
Y_1(N) \times \mu_{dm}^\circ \arrow[r] & Y_1(N) \times \mu_{m}^\circ
\end{tikzcd}
\]
where the bottom map is induced from the $d$-th power map $\mu_{dm}^\circ \to \mu_{m}^\circ$.
\end{lemma}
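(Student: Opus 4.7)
The plan is to trace a test triple $(E, P, Q)$ representing an $S$-point of $Y(dm, dmN)$ through both compositions. Applying $\Xi_d$ gives $(E', P', Q') = (E/\langle mP\rangle, \phi(P), \phi(dQ))$, where $\phi \colon E \twoheadrightarrow E'$ is the quotient isogeny of degree $d$, and then $t_m$ outputs $\bigl((E'/\langle P'\rangle,\, mQ' \bmod P'),\, \langle P', NQ'\rangle_{E'}\bigr)$. Going the other way, $t_{dm}$ produces $\bigl((E/\langle P\rangle,\, dmQ \bmod P),\, \langle P, NQ\rangle_E\bigr)$, which the bottom arrow sends to $\bigl((E/\langle P\rangle,\, dmQ \bmod P),\, \langle P, NQ\rangle_E^{d}\bigr)$.

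Agreement of the $Y_1(N)$-components is immediate from the moduli descriptions: since $\ker\phi = \langle mP\rangle \subset \langle P\rangle$, the preimage in $E$ of $\langle P'\rangle = \phi(\langle P\rangle)$ is $\langle P\rangle$, so $E'/\langle P'\rangle = E/\langle P\rangle$; and $mQ' = m\phi(dQ) = \phi(dmQ)$ reduces to $dmQ \bmod P$ in this quotient. The content of the lemma therefore reduces to the Weil-pairing identity
\[
\langle P', NQ'\rangle_{E'} = \langle P, NQ\rangle_E^{d} \qquad \text{in } \mu_m^\circ.
\]

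For this last identity the plan is to argue via geometrically connected components. Both sides define morphisms $Y(dm, dmN) \to \mu_m^\circ$ over $\mbb{Q}$; since $\mu_m^\circ$ is $0$-dimensional they are locally constant, and hence determined by their values on the geometric components of the source. The linear independence of $P$ and $Q$ forces $P$ and $NQ$ to form a $(\mbb{Z}/dm)$-basis of $E[dm]$, so by perfectness of the Weil pairing $\langle P, NQ\rangle_E$ is always a primitive $dm$-th root of unity; combined with the fact that $Y(dm, dmN) \to \mu_{dm}^\circ$ has geometrically connected fibres, this shows the geometric components of $Y(dm, dmN)$ are in bijection with primitive $dm$-th roots of unity, which form a single $\Gal(\overline{\mbb{Q}}/\mbb{Q})$-orbit. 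Since both sides of the desired identity are Galois-equivariant, it suffices to verify equality at one geometric point. I will do this on the Tate curve $E = \mbb{G}_m/q^{\mbb{Z}}$ with $P = \zeta_{dm}$ and $Q = q^{1/(dmN)}$: the quotient $E' = E/\langle \zeta_d\rangle$ identifies with $\mbb{G}_m/q^{d\mbb{Z}}$ via $\phi(x) = x^d$, and the explicit Tate-curve formula $e_n(\zeta_n^a q^{b/n}, \zeta_n^c q^{e/n}) = \zeta_n^{ae-bc}$ reduces the identity to a direct calculation. I expect the main subtlety to be pinning down the $d$-th-root-of-unity ambiguity left by the naive isogeny compatibility $e_n^{E'}(\phi x, \phi y) = e_n^E(x,y)^{\deg\phi}$; the explicit computation on the Tate curve resolves this ambiguity, and the Galois-orbit argument above then propagates the identity globally.
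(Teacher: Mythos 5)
Your proposal is correct, and for the key step it takes a genuinely different route from the paper, whose entire proof is the direct unwinding of the moduli descriptions (``immediate from the definitions''). Your treatment of the $Y_1(N)$-component is exactly that unwinding, and it is right: $\ker\phi=\langle mP\rangle\subset\langle P\rangle$ gives $E'/\langle P'\rangle=E/\langle P\rangle$ and $mQ'=\phi(dmQ)$. For the remaining identity $\langle P',NQ'\rangle_{E'}=\langle P,NQ\rangle_E^{d}$, the paper's implicit argument is a short computation with the standard functoriality of the Weil pairing (compatibility with the degree-$d$ quotient isogeny $\phi$, using $\hat{\phi}\circ\phi=[d]$, together with the compatibility between the pairings on $E[dm]$ and $E[m]$), whereas you replace that bookkeeping by rigidity: both sides are $\mbb{Q}$-morphisms to the zero-dimensional scheme $\mu_m^\circ$, hence locally constant; the geometric components of $Y(dm,dmN)$ are the fibres of the Weil-pairing map over the primitive $dm$-th roots of unity (nonempty fibres only over primitive roots, since $P$ and $NQ$ are a basis of $E[dm]$), these form one $\Gal(\overline{\mbb{Q}}/\mbb{Q})$-orbit, and both composites are defined over $\mbb{Q}$, so one geometric point per orbit suffices. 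All ingredients you invoke are available (geometric connectedness of the fibres is stated in \S 2), and the Tate-curve check does close: with $q'=q^d$ one gets $P'=\zeta_{dm}^{d}$ and $NQ'=(q')^{1/m}$, so the left-hand side is $\zeta_{dm}^{d}$, matching $e_{dm}(\zeta_{dm},q^{1/(dm)})^{d}$ on the right, and since the same normalisation of the Tate-curve formula is used on both sides the sign convention is irrelevant. A minor point worth recording if you write this up: your test point lives over a larger algebraically closed field (e.g. $\mbb{C}$ with $0<|q|<1$), which is harmless because the geometric components remain connected after base change, and equality of the two maps on geometric points suffices since the source is reduced. In terms of trade-offs, the paper's route is a two-line verification if one is willing to quote the pairing compatibilities; yours is longer but avoids having to recall their precise form and pins down all normalisation ambiguities by one explicit computation, at the cost of invoking the component structure of $Y(dm,dmN)$ and the Galois action on it.
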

\begin{proof}
Immediate from the definitions. 
\end{proof}


\section{Families of modular forms and Galois representations}
\label{families}

\subsection{Weight space}
\begin{defn}
Let $\Lambda \defeq \Zp \db{\Zp^{\times}}$. The weight space $\WW$ is defined to be the rigid generic fibre of the formal spectrum $\opn{Spf}\Lambda$. It represents the functor taking a rigid analytic space $X$ over $\opn{Sp}\mbb{Q}_p$ to the set $\opn{Hom}_{\text{cont}}(\mbb{Z}_p^{\times}, \mathcal{O}_X(X)^{\times})$. Let  
\[ \kappa \colon \Zp^{\times} \to \Lambda^{\times} \subset \OO_\WW(\WW)^{\times} \]
denote the tautological character.
\end{defn}
The space $\WW$ is isomorphic to a union of $p-1$ wide open discs (recall that we have assumed $p>2$).

\begin{defn}\label{WW_E}
Let $E$ be a finite extension of $\Qp$ with ring of integers
$\OO_E$, and let
$\UU$ be a wide open disc in $\WW_E \defeq \WW \times_{\opn{Sp}(\Qp)} \opn{Sp}(E)$. Define $\Lambda_{\invs{U}} \defeq \OO_{\WW}(\UU)^{\circ}$, the subring of power-bounded elements of $\OO_{\WW}(\UU)$ (so $\Lambda$ is non-canonically isomorphic to $\ordd_E \db{t}$), and write 
\[ \kappa_\UU \colon \Zp^{\times} \to \Lambda_\UU^{\times} \]
for the map induced by $\kappa$.
\end{defn}

\begin{defn} \label{accessible}
The \emph{$m$-accessible} part of the weight space,
denoted $\WW_m$, is
the union of wide open discs defined by the inequality
\[ \left| \kappa(1+p^{m+1}) - 1 \right|^{p-1} < |p| \,. \]
\end{defn}
We will eventually restrict our attention to $\WW_0$.

\begin{defn}
A \emph{classical point} of $\WW$ is a point corresponding to the character $z \mapsto z^k$ for some nonnegative integer $k$.
\end{defn}
\begin{remark}
The previous definition is an abuse of notation, since the weight $z \mapsto \chi(z) z^k$, for $\chi$ a finite order character, can be the weight of a point on the eigencurve corresponding to a classical modular form. But we will not need to consider this more general class of weights.
\end{remark}

\subsection{Families of overconvergent modular forms}
\begin{defn}
Let $E$ be a finite extension of $\Qp$ with ring of integers
$\OO_E$, and let
$\UU \subseteq (\WW_0)_E$
be a wide open disc containing a classical point.
A \emph{Coleman family} $\FF$ over $\UU$ (of tame level $N$) is formal power
series $\sum_{n=1}^{\infty} a_n(\FF) q^n \in q \Lambda_\UU \db{q}$ satisfying the following properties:
\begin{enumerate}
\item $a_1(\FF) = 1$ and $a_p(\FF) \in \Lambda_\UU[\frac{1}{p}]^{\times}$.
\item For all but finitely many classical weights
$k$ contained in $\UU$, the restriction of $\FF$ to
$k$ is the $q$-expansion of a classical modular form of weight
$k+2$ and level $\Gamma_1(N) \cap \Gamma_0(p)$
that is a normalized eigenform for the Hecke operators (away from $Np$).
\end{enumerate}
We denote the character associated to $\invs{F}$ by $\varepsilon_{\invs{F}}$, so that for all but finitely many classical weights $k$ in $\invs{U}$ the specialisation of $\varepsilon_{\invs{F}}$ at $k$ coincides with the nebentypus of $\invs{F}_k$. 
\end{defn}

The following definition gives a criterion for when a modular form lies in a Coleman family. 

\begin{definition} \label{DefinitionOfNoble} 
We say that a cuspidal eigenform $f$ of level $\Gamma_1(N) \cap \Gamma_0(p)$ and weight $k+2$ is \emph{noble} if the following two conditions are satisfied.
\begin{itemize}
\item $f$ is the $p$-stabilisation of normalised cuspidal newform $f'$ of level $\Gamma_1(N)$ such that the roots $\{\alpha_{f'}, \beta_{f'}\}$ of the Hecke polynomial 
\[
X^2 - a_p(f')X + p^{k+1}\varepsilon_{f'}(p)
\]
are distinct. Here $a_p(f')$ is the $p$-th Fourier coefficient of $f'$ and $\varepsilon_{f'}$ is the nebentypus. 
\item If the $U_p$-eigenvalue of $f$ has $p$-adic valuation $k+1$ then the local Galois representation attached to $f'$ at $p$ is not the direct sum of two characters. 
\end{itemize}  
\end{definition}

\begin{lemma}
Let $f$ be a noble eigenform of weight $k+2$. Then for any sufficiently small $\UU \owns k$ in $\WW$,
there is a unique Coleman family $\FF$ over $\UU$
such that $\FF_k = f$.
\end{lemma}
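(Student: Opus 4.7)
The plan is to deduce this from the Coleman--Mazur eigencurve $\mathcal{C}_N$ of tame level $N$, together with the étaleness of the weight map at noble points.

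First, I would recall that there is a rigid analytic eigencurve $\mathcal{C}_N$ equipped with a finite flat morphism $w \colon \mathcal{C}_N \to \mathcal{W}$, whose points parametrize systems of Hecke eigenvalues of overconvergent finite-slope cuspidal eigenforms of tame level $N$. The noble eigenform $f$ of weight $k+2$ corresponds to a point $x_f \in \mathcal{C}_N$ with $w(x_f) = k$. A Coleman family $\mathcal{F}$ over $\mathcal{U} \owns k$ with $\mathcal{F}_k = f$ is the same data as a section $s \colon \mathcal{U} \to \mathcal{C}_N$ of $w$ (after shrinking) with $s(k) = x_f$, because the universal eigensystem on $\mathcal{C}_N$ pulls back to a power series $\sum a_n q^n \in \Lambda_{\mathcal{U}}[1/p]\db{q}$ satisfying the Coleman family axioms, and conversely any Coleman family determines such a section by its Hecke eigenvalues.

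Hence both existence and uniqueness reduce to showing that $w$ is étale at $x_f$: if $w$ is étale at $x_f$, then there is an admissible open neighborhood $V$ of $x_f$ such that $w|_V \colon V \xrightarrow{\sim} \mathcal{U}$ for some wide open disc $\mathcal{U} \owns k$, and the inverse is the desired unique section. The étaleness is exactly where the two conditions in Definition \ref{DefinitionOfNoble} enter. When the slope $v_p(a_p(f))$ is strictly less than $k+1$, étaleness at $x_f$ is Coleman's classicality theorem together with the fact that the two roots $\alpha_{f'}, \beta_{f'}$ are distinct, which guarantees that the two $p$-stabilisations of $f'$ give distinct smooth points of the eigencurve above $k$. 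When the slope equals the critical value $k+1$, étaleness is a theorem of Bellaïche (building on work of Bellaïche--Chenevier and Coleman), which shows that smoothness of $\mathcal{C}_N$ and étaleness of $w$ at a critical-slope refinement are equivalent to the local Galois representation of $f'$ at $p$ being non-split, which is precisely the second condition.

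The main obstacle is the critical-slope case: this requires genuinely deep input, namely Bellaïche's analysis of the local geometry of the eigencurve at evil Eisenstein-like points via Galois deformation theory and the vanishing of a certain $\mathrm{Ext}^1$. In the non-critical case the argument is comparatively soft, using only Coleman's control theorem and the distinctness of the Hecke roots. Once étaleness is in hand, the construction of $\mathcal{F}$ as a $q$-expansion via the universal eigenvalues is formal, and uniqueness follows because two sections of an étale map agreeing at a point agree in a neighborhood.
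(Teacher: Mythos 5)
Your proposal follows essentially the same route as the paper: both reduce the lemma to the local structure of the eigencurve over weight space, the paper via Bella\"iche's Lemma 2.8 (the weight map is finite flat near $x$ of degree $\dim M^{\dagger}_{k+2,(x)}$, which equals one by Coleman's control theorem and one-dimensionality for newforms), you via the equivalent statement that the weight map is \'etale at the noble point, with the family obtained as the section of the weight map. The only real difference is that you make the critical-slope case explicit (\'etaleness via non-splitness of the local Galois representation at $p$, due to Bella\"iche), whereas the paper's sketch argues only with slope $< k+1$ and leaves that case buried in the citation.
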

\begin{proof}
This is essentially proved in \cite[Lemma 2.8]{bellpadic}.
In particular, the lemma shows that if
$x$ is an $E$-point on the eigencurve of
weight $k+2$,
then there is a neighborhood $\mathcal{V} \owns x$
and an open disc $\UU \subseteq \WW$
such that $\mathcal{V} \to \UU$ is finite flat of
degree $\dim M^{\dagger}_{k+2,(x)}$,
where $M^{\dagger}_{k+2,(x)}$ is the space
of overconvergent modular forms
of weight $k+2$ that are Hecke eigenforms
with eigenvalues given by $x$.
Suppose $x$ is noble; then it has slope less than $k+1$, so Coleman's control
theorem \cite[Thm.~6.1]{coleman-overconvergent} implies that $M^{\dagger}_{k+2,(x)}$
consists of classical modular forms.  Moreover, $x$ corresponds
to a newform, so the subspace
of classical modular forms is one-dimensional (see for example
\cite[Thm.~VIII.3.3]{lang-modular}).
\end{proof}

\subsection{Locally analytic distribution modules}

Now we begin defining a family of Galois representations on $\WW$
as in \cite[\S 4]{lz-coleman}.

Let $Y = Y_1(N(p))$ be the modular curve at level $\Gamma_1(N) \cap \Gamma_0(p)$,
and let $\pi \colon \EE \to Y$ be the universal elliptic curve over
$Y$.
Let
\[ \SH \defeq \mbf{R}^1 \pi_* \Zp(1) \]
be the relative Tate module of $\EE$.
We will define several pro-sheaves of ``functions and distributions
on $\SH$.''  By this, we mean the following. Let $Y(p^{\infty}, p^{\infty}N)$ denote the pro-scheme $\varprojlim_n Y(p^n, p^nN)$ and $t \colon Y(p^{\infty}, p^{\infty}N) \to Y$ the natural projection; it is a Galois covering, and its Galois group can be identified with the Iwahori subgroup $U_0(p) \subset \GL_2(\Zp)$ (with respect to the standard Borel). 

The pro-sheaf $t^* \SH$ is canonically isomorphic to the constant
pro-sheaf $\underline{H}$, where $H=\Zp^2$. We will define several spaces of functions and distributions
on subsets of $H$ that are equipped with actions of $U_0(p)$. Since a $U_0(p)$-module determines a 
$U_0(p)$-equivariant pro-sheaf
on $Y(p^{\infty},p^{\infty}N)$, these spaces will descend to pro-sheaves on $Y$.

Firstly, we recall the definition of two locally analytic distribution modules following \cite[\S 4.2]{lz-coleman}.

\begin{defn}
Let $T_0$, $T_0'$ be the subsets of $H$
defined by
\[ T_0 \defeq \Zp^{\times} \times \Zp, \quad T_0' \defeq p\Zp \times \Zp^{\times} \, \]
and let $\Sigma_0(p)$, $\Sigma_0'(p)$ be the submonoids
of $M_2(\Zp)$
defined by
\[ \Sigma_0(p) \defeq \begin{pmatrix} \Zp^{\times} & \Zp \\ p\Zp & \Zp \end{pmatrix}, \quad
\Sigma_0'(p) \defeq \begin{pmatrix} \Zp & \Zp \\ p\Zp & \Zp^{\times} \end{pmatrix}. \]
The monoids $\Sigma_0(p)$ and $\Sigma_0'(p)$ act on the right on $T_0$ and $T_0'$, respectively.
\end{defn}

Let $R$ be a complete topological $\Zp$-algebra, and let
$w \colon \Zp^{\times} \to R^{\times}$ be a continuous homomorphism.
Suppose there exists an integer $m \geq 0$
such that the restriction of $w$ to $1+p^{m+1} \Zp$ is analytic. We are primarily interested in the following cases:

\begin{enumerate}
\item $R = \Lambda_\UU$, $w=\kappa_\UU$ for some finite extension
$E/\Qp$ and some $\UU \subset (\WW_m)_E$. \label{R family}
\item $R = \OO_E$, $w(z) = z^k$ for some finite extension $E/\Qp$ and some nonnegative integer $k$. \label{R classical pt}
\end{enumerate}

\begin{defn} \label{locally analytic}
Let $T$ be either $T_0$ or $T_0'$.
Let $A^{\circ}_{w,m}(T)$ denote the space of functions $f \colon T \to R$
satisfying the following properties:
\begin{enumerate}
\item The function $f$ is homogeneous of weight $w$, i.e. $f(\lambda v) = w(\lambda) f(v)$ for any $v \in T$, $\lambda \in \Zp^{\times}$.
\item The function $f$ is analytic on discs of radius $p^{-m}$, i.e. for any $v \in T$, the restriction of $f$ to $v+p^m T$ is given by a power
series with coefficients in $R$.
\end{enumerate}
Let
\[ D^{\circ}_{w,m}(T) \defeq \Hom_{R, \mathrm{cont}}(A^{\circ}_{w,m}(T), R) \]
\[ D_{w,m}(T) \defeq D^{\circ}_{w,m}(T)\left[ 1/p \right] \,. \]
When $R=\Lambda_{\UU}$, $w=\kappa_\UU$,
we will denote the modules by
\begin{center}
$A^{\circ}_{\UU,m}$, $D^{\circ}_{\UU,m}$, $D_{\UU,m}$.
\end{center}
When $R = \OO_E$, $w(z)=z^k$, we will denote the modules by
\begin{center}
$A^{\circ}_{k,m}$, $D^{\circ}_{k,m}$, $D_{k,m}$.
\end{center}
\end{defn}
The modules $A^{\circ}_{w,m}(T)$, $D^{\circ}_{w,m}(T)$,
$D_{w,m}(T)$ inherit an action of $\Sigma_0(p)$ or $\Sigma_0'(p)$ from the action on $T$. If the disc $\UU$ contains the point corresponding to the homomorphism
$z \mapsto z^k$, then the specialization map $\Lambda_{\UU} \to \Zp$
induces a homomorphism
\[ D^{\circ}_{\UU,m} \to D^{\circ}_{k,m} \]
and similarly there are specialization maps with $D^{\circ}$ replaced by
$A^{\circ}$ or $D$.

As mentioned at the beginning of this subsection,
each of the modules defined above determines a pro-sheaf
on $Y$.  We let
$\DD^{\circ}_{w,m}(\SH_0)$, $\DD^{\circ}_{w,m}(\SH_0')$,
be the pro-sheaves corresponding
to $D^{\circ}_{w,m}(T_0)$, $D^{\circ}_{w,m}(T_0')$, respectively.

\subsection{Galois representations}
Now we define families of Galois representations
coming from the cohomology of the sheaves defined above. For a wide open disc $\UU \subset (\WW_0)_E$ we set $B_{\UU} \defeq \Lambda_{\UU}[\frac{1}{p}]$.
\begin{defn} As before, let $Y = Y_1(N(p))$ denote the modular curve of level $\Gamma_1(N) \cap \Gamma_0(p)$. Set 
\begin{IEEEeqnarray*}{rCl}
M^{\circ}_{w,m}(\SH_0) & \defeq & \opn{H}^1_{\et}(Y_{\overline{\mbb{Q}}}, \DD^{\circ}_{w,m}(\SH_0))(-\kappa_{\UU}) \\
M^{\circ}_{w,m}(\SH_0') & \defeq  & \opn{H}^1_{\et}(Y_{\overline{\mbb{Q}}},
\DD^{\circ}_{w,m}(\SH_0'))(1).
\end{IEEEeqnarray*}
\end{defn}

\begin{prop}[{\cite[Thm.~4.6.6]{lz-coleman}}]
Let $f_0$ be a noble eigenform of weight $k_0 + 2$, and let $\FF$ be the Coleman family passing through $f_0$.  If the disc $\UU \owns k_0$ is sufficiently small,
then:
\begin{enumerate}
\item The modules
\[ M_{\UU}(\FF) \defeq M_{\UU,0}(\SH_0) \left[ T_n = a_n(\FF) \; \;  \forall n \ge 1 \right] \]
\[ M_{\UU}(\FF)^* \defeq M_{\UU,0}(\SH_0') \left[ T_n' = a_n(\FF) \; \;  \forall n \ge 1 \right] \]
are direct summands (as $B_{\UU}$-modules) of $M_{\UU,0}(\SH_0)$
and $M_{\UU,0}(\SH_0')$ respectively, where $[-]$ stands for isotypic component and $T_n'$ is the transpose of the usual Hecke operator. Each is free of rank $2$ over $B_{\UU}$.
\item The Ohta pairing (see \cite[\S 4.3]{lz-coleman}) induces an isomorphism of $B_{\UU}[G_{\QQ}]$-modules
\[ M_{\UU}(\FF)^* \cong \Hom_{B_{\UU}}(M_{\UU}(\FF),B_{\UU}) \,. \]
\end{enumerate}
\end{prop}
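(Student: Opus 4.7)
The plan is to obtain this as a consequence of the structure theory of the Coleman--Mazur eigencurve near noble points, combined with a duality on the relevant distribution cohomology. First, I would identify $M_{\UU,0}(\SH_0)$ and $M_{\UU,0}(\SH_0')$ as families of overconvergent modular symbols (or rather their \'etale analogue), equipped with a natural action of the Hecke algebra away from $Np$ and of $U_p$. For any classical point $k \in \UU$, Stevens' control theorem (combined with a slope decomposition) produces specialization maps from the $\leq h$-slope parts of $M_{\UU,0}(\SH_0)$ to the corresponding subspaces of $\opn{H}^1_\et(Y_{\overline{\QQ}}, \TSym^k \SH)$, and similarly for $M_{\UU,0}(\SH_0')$. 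These specialization maps are surjective on slope-$\leq h$ parts and become isomorphisms on sufficiently small slope.

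Next, I would exploit the hypothesis that $f_0$ is noble. Nobility ensures (cf.\ \cite[Lemma~2.8]{bellpadic}) that the corresponding point $x_0$ on the eigencurve is smooth and that the eigencurve is \'etale over $\WW$ at $x_0$: the first noble condition forces the Hecke polynomial of the associated newform to have distinct roots (so $f_0$ is one of two distinct $p$-stabilisations), and the second excludes the pathological CM-type splitting that would obstruct \'etaleness. Shrinking $\UU$ so that \'etaleness persists over all of $\UU$, one gets an idempotent $e_{\FF}$ in the completed Hecke algebra acting on $M_{\UU,0}(\SH_0)$ whose image is $M_{\UU}(\FF)$; this gives the direct summand claim. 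For freeness of rank $2$, one argues fibrewise: at each classical $k \in \UU$ the specialisation is identified with the $\FF_k$-isotypic component in $\opn{H}^1_\et(Y_{\overline{\QQ}}, \TSym^k \SH)$, which has dimension $2$ by Eichler--Shimura applied to a newform. Combined with the fact that $B_{\UU}$ is a PID (it is essentially $E \db{t}[1/p]$), this fibrewise rank computation promotes to freeness of rank $2$ over $B_{\UU}$. The argument for $M_{\UU}(\FF)^*$ is entirely analogous.

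For part (2), the Ohta pairing is obtained by combining a natural pairing of distribution sheaves $\DD^{\circ}_{\kappa_\UU,0}(\SH_0) \times \DD^{\circ}_{\kappa_\UU,0}(\SH_0') \to B_{\UU}$ (constructed from the tautological pairing on $H$) with Poincar\'e duality on the modular curve $Y$. The resulting Galois- and Hecke-equivariant pairing $M_{\UU,0}(\SH_0) \times M_{\UU,0}(\SH_0') \to B_{\UU}$ restricts, by the Hecke-equivariance, to a pairing on the $\FF$-isotypic components, giving a $B_{\UU}[G_\QQ]$-linear map $M_{\UU}(\FF)^* \to \Hom_{B_{\UU}}(M_{\UU}(\FF), B_{\UU})$. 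Both source and target are free of rank $2$ over $B_{\UU}$, so to conclude it is an isomorphism it suffices to check non-degeneracy at a single classical point $k \in \UU$, where the pairing recovers classical Poincar\'e duality restricted to the $\FF_k$-isotypic components; non-degeneracy there follows because newforms pair non-trivially with themselves under Poincar\'e duality.

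The main obstacle is the control of the overconvergent cohomology at non-classical weights: the free-of-rank-$2$ statement must hold over the whole base $B_{\UU}$, not just at classical specialisations, and this requires both the \'etaleness of the eigencurve at $x_0$ (which truly uses the full force of the noble hypothesis) and a careful argument that the slope decomposition of $M_{\UU,0}(\SH_0)$ is well-behaved under base change along $B_{\UU} \to \OO_E$. In practice these technicalities are exactly what is carried out in the proof of \cite[Thm.~4.6.6]{lz-coleman}, which we invoke as a black box.
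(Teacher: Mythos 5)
The paper offers no proof of this proposition beyond the citation to \cite[Thm.~4.6.6]{lz-coleman}, and your proposal---after a broadly accurate outline of how that theorem is actually proved (\'etaleness of the eigencurve over weight space at noble points via Bella\"iche, control theorems for overconvergent \'etale cohomology, and the Ohta pairing)---likewise invokes it as a black box, so the two treatments essentially coincide. The one step to tighten is in part (2): non-degeneracy at a single classical point only makes the determinant of the map $M_{\UU}(\FF)^* \to \Hom_{B_{\UU}}(M_{\UU}(\FF),B_{\UU})$ non-vanishing at that point, and one must then shrink $\UU$ (as the statement permits) and use Weierstrass preparation for $B_{\UU} \cong \OO_E\db{t}[1/p]$ to conclude that a bounded function without zeros on the smaller disc is a unit, hence that the map is an isomorphism.
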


Given two Coleman families $\FF$ and $\GG$ defined over $\UU_1, \UU_2 \subset \WW_E$, respectively, we will write $M := M_{\UU_1}(\FF)^* \hatot M_{\UU_2}(\GG)^*$ for the family of Galois representations on $\UU_1 \times \UU_2$ given by the $B_{\UU_1} \hatot B_{\UU_2}$-module 
\[
M_{\UU_1}(\FF)^* \hat{\otimes}_E M_{\UU_2}(\GG)^*
\]
and to ease notation, we will often omit the subscripts when the spaces $\UU_1$ and $\UU_2$ are clear. Furthermore we will often restrict this representation to open affinoids $V_1 \subset \UU_1$ and $V_2 \subset \UU_2$; in this case $M$ is a Banach module over the affinoid algebra $\ordd_{\WW}(V_1 \times V_2)$ that is free of rank four.

\begin{definition}
Let $A$ be a $\mbb{Q}_p$-affinoid algebra and $M$ an $A$-valued (continuous) representation of $G_{\mbb{Q}}$. Let $D^{\mathrm{la}}(\Gamma, A)$ denote the space of locally analytic distributions with values in $A$; this comes equipped with an action of $G_{\mbb{Q}}$ given by
\begin{equation} \label{actionofdist}
\int_{\Gamma} f \; d(g \cdot \mu) := \int_{\Gamma} f([g]^{-1}x) \;  d\mu(x)
\end{equation}
where $[g]$ denotes the image of $g \in G_{\mbb{Q}}$ in $\Gamma = \Gal(\mbb{Q}(\mu_{p^{\infty}})/\mbb{Q})$, and is isomorphic (as $A[G_{\mbb{Q}}]$-modules) to $\ordd_{\WW}(\WW)^{\iota} \hatot A$. The \emph{cyclotomic deformation} of $M$ is defined to be 
\[
M(-\kappa) \defeq D^{\mathrm{la}}(\Gamma, M) \defeq M \hatot_{\mbb{Q}_p} D^{\mathrm{la}}(\Gamma, A).
\]
with the diagonal Galois action.

Similarly, for any $\lambda \in \RR_{\ge 0}$,
let $D_{\lambda}(\Gamma,\Qp)$ be the space of $\Qp$-valued distributions
on $\Gamma$ of order $\lambda$ as in \cite[\S II.3]{colmez-fonctions},
with Galois action given by the same formula in (\ref{actionofdist}).
Define $D_{\lambda}(\Gamma,M) \defeq D_{\lambda}(\Gamma,\Qp) \hat{\otimes}_{\Qp} M$.
\end{definition}

\subsection{Some properties of locally analytic distribution modules}
We mention some properties of the modules defined above that
will be useful in section \ref{beilinson-flach}.

\begin{defn} \label{CGdef}
Define $\Lambda(H)$ to be the space of continuous $\Zp$-valued
distributions on $H$, and let $\Lambda(\SH)$ be the corresponding
pro-sheaf on $Y$. This coincides with the sheaf of Iwasawa modules for $\SH$, i.e. $\Lambda(\SH)$ is the pro-system of \'{e}tale sheaves corresonding to the inverse system $\left( \mbb{Z}/p^n \mbb{Z} [\SH /p^n \SH ] \right)_{n \geq 1}$ with the natural transition maps.

For any nonnegative integer $k$, let $\TSym^k H$ be the
space of degree $k$ symmetric tensors over $H$, i.e.~it is the subgroup of $H^{\otimes k}$ that
is invariant under the action of the symmetric group $S_k$.
Let $\TSym^k \SH$ be the corresponding pro-sheaf on $Y$.

For $j \geq 0$, set $\Lambda^{[j]}(\SH) \defeq \Lambda(\SH) \otimes \opn{TSym}^j\SH$ and $\Lambda^{[j, j]} = \Lambda^{[j]}(\SH) \boxtimes \Lambda^{[j]}(\SH)$. Then there is a Clebsch-Gordon map (see \cite[\S 3.2]{lz-coleman})
\[
\opn{CG}^{[j]} \colon \Lambda(\SH) \rightarrow \left( \Lambda^{[j]}(\SH) \hatot \Lambda^{[j]}(\SH) \right) (-j).
\] 

\end{defn}
For any $\UU$, $m$, there is a natural restriction map
\[ \Lambda(\SH) \to \DD^{\circ}_{U,m}(T) \, \]
and for any nonnegative integer $k$, $\TSym^k H$ can be
identified with the space of distributions on
homogeneous degree $k$ polynomial functions on $H$.
Hence there is a natural surjection
\[ \DD^{\circ}_{k,m}(T) \to \TSym^k \SH \,. \]

\subsection{The three-variable $p$-adic $L$-function} \label{ThreeVarSection}

Let $f$ and $g$ be two normalised cuspidal eigenforms of weights $k +2, k'+2$ and levels $\Gamma_1(N_1)$ and $\Gamma_1(N_2)$ respectively, where $k > k' \geq 0$. Let $\chi$ be a Dirichlet character of conductor $N_{\chi}$ and suppose that $p$ does not divide $N_1 \cdot N_2$. To this data one has the associated (imprimitive) Rankin--Selberg $L$-function, defined as 
\[
L(f, g, \chi, s) = L_{(N_1 N_2 N_{\chi})}(\varepsilon_f \varepsilon_g \chi^2, 2s - 2 - k - k') \cdot \sum_{\substack{n \geq 1 \\ (n, N_{\chi}) = 1}} a_n(f) a_n(g) \chi(n) n^{-s}
\]
for $\opn{Re}(s)$ sufficiently large. Here the subscript $(N_1 N_2 N_{\chi})$ denotes the omission of the Euler factors at primes dividing $N_1 N_2 N_{\chi}$. This $L$-function differs from the automorphic $L$-function attached to the representation $\pi_f \otimes \pi_g \otimes \chi$ by only finitely many Euler factors. Since we have assumed $k \neq k'$, the function $L(f, g, \chi, -)$ has analytic continuation to all of $\mbb{C}$ (see \cite[\S 2.1]{LoefflerRS}). 

Using the theory of nearly overconvergent families of modular forms as described in \cite{AIFamilies}, Urban has constructed a three-variable $p$-adic $L$-function which interpolates critical values of the above Rankin--Selberg $L$-function. More precisely, suppose that there exist noble $p$-stabilisations of $f$ and $g$ (as in Definition \ref{DefinitionOfNoble}) and let $\invs{F}$ and $\invs{G}$ be Coleman families over affinoid domains $V_1$ and $V_2$, passing through these $p$-stabilisations. We can shrink $V_1$ and $V_2$ to ensure that all classical specialisations of $\invs{F}$ and $\invs{G}$ are noble (see Remark \ref{NobleRemark}) -- if $\invs{F}_{k_1}$ denotes such a specialisation then we let $\invs{F}^\circ_{k_1}$ denote the associated newform, and similarly for $\invs{G}$. 

\begin{theorem}[Urban]
There exists an element $L_p(\invs{F}, \invs{G}, 1 + \mathbf{j}) \in \ordd(V_1 \times V_2 \times \invs{W})$ satisfying the following interpolation property:
\begin{itemize}
    \item For all integers $k_1, k_2, j$ satisfying $k_i \in V_i$ and $0 \leq k_2 + 1 \leq j \leq k_1$, and all Dirichlet characters $\chi$ of $p$-power conductor, we have 
    \[
    L_p(\invs{F}_{k_1}, \invs{G}_{k_2}, 1 + j + \chi) = C(\invs{F}_{k_1}, \invs{G}_{k_2}, 1 + j + \chi ) \cdot \frac{j! (j-k_2 - 1)! i^{k_1 - k_2}}{\pi^{2j + 1 - k_2}2^{2j + 2 + k_1 - k_2} \langle \invs{F}_{k_1}^{\circ}, \invs{F}_{k_1}^{\circ} \rangle_{N_1}} L(\invs{F}_{k_1}^{\circ}, \invs{G}_{k_2}^{\circ}, \chi^{-1}, 1+j)
    \]
    where $C(\invs{F}_{k_1}, \invs{G}_{k_2}, 1 + j + \chi )$ is an explicit product of Euler factors and Gauss sums. 
\end{itemize}
\end{theorem}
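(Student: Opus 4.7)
The plan is to follow Urban's strategy \cite{Urban} (as refined in \cite{AIFamilies}), which ultimately reduces the interpolation formula to Shimura's integral representation of critical Rankin--Selberg values. My starting point would be Shimura's classical formula, which expresses $L(f^\circ, g^\circ, \chi^{-1}, 1+j)$ (at a critical integer $j$ with $k' < j \leq k$) as a Petersson inner product
\[
\langle f^\circ, \, g^\circ \cdot E_{j-k',\chi}^{\ast} \rangle_{N_1}
\]
up to an explicit ratio of $\Gamma$-factors, powers of $\pi$, $i$, and $2$, together with local Euler correction factors at primes in $N_1 N_2 N_\chi$. Here $E_{j-k',\chi}^{\ast}$ is a suitable nearly holomorphic Eisenstein series whose holomorphic projection, paired against the cusp form $g^\circ$, is nearly holomorphic of the correct weight. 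The normalising denominator $\langle \invs{F}_{k_1}^\circ, \invs{F}_{k_1}^\circ \rangle_{N_1}$ in the statement arises from projecting $g^\circ \cdot E^\ast$ onto the $f^\circ$-isotypic component.

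The next step is to $p$-adically interpolate the right-hand side. Using the theory of nearly overconvergent families of \cite{AIFamilies}, I would construct a family $\mathcal{E}(1+\mathbf{j})$ of nearly overconvergent Eisenstein series over $V_2 \times \invs{W}$ (the second factor providing the cyclotomic variable) whose specialisation at $(k_2, j + \chi)$ recovers $E_{j-k_2,\chi}^{\ast}$ on the nose after the standard normalisations. Multiplying by $\invs{G}$ produces a nearly overconvergent family over $V_2 \times \invs{W}$ of weight $\kappa_{V_2} + 2 + (1+\mathbf{j})$, but taking values in forms of a larger weight; applying the overconvergent projector of Urban (bounded on slope $< h$ subspaces) yields an overconvergent family. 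Pairing with $\invs{F}$ via the family Petersson inner product / family $f$-isotypic projector (constructed using the trace map and the fact that $\invs{F}$ has finite slope) produces a rigid analytic function $L_p(\invs{F}, \invs{G}, 1+\mathbf{j})$ on $V_1 \times V_2 \times \invs{W}$.

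To verify the interpolation property, I would specialise each ingredient at a classical critical point $(k_1, k_2, j)$ with $0 \leq k_2 + 1 \leq j \leq k_1$, check that the overconvergent projector specialises to Shimura's holomorphic projection on classical nearly holomorphic forms of the right weight (this is where the critical range hypothesis $j \leq k_1$ ensures a small-slope condition), and that the family Petersson pairing becomes the classical one divided by $\langle \invs{F}_{k_1}^\circ,\invs{F}_{k_1}^\circ\rangle_{N_1}$. The explicit factor $C(\invs{F}_{k_1}, \invs{G}_{k_2}, 1 + j + \chi)$ then collects the local Euler corrections at bad primes and at $p$ (the latter reflecting the passage from a newform to its chosen $p$-stabilisation, producing the expected ratios of $a_p$'s and $\alpha_{\invs{F}}, \alpha_{\invs{G}}$), together with the Gauss sums arising from twisting by $\chi$.

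The main obstacle is the construction and interpolation of the overconvergent projector in families: one must show it preserves finite-slope subspaces of nearly overconvergent forms and specialises compatibly with classical holomorphic projection. This is the technical heart of \cite{Urban} and \cite{AIFamilies}, and is where the control theorem for slope-bounded families is used in an essential way. Once this is in place, the remaining work is bookkeeping: tracking test vectors at bad primes to identify $C(\invs{F}_{k_1}, \invs{G}_{k_2}, 1+j+\chi)$ precisely, and verifying that the resulting element lies in $\ordd(V_1 \times V_2 \times \invs{W})$ rather than only in a quotient (which follows from the uniqueness of the $\invs{F}$-isotypic projection once $V_1$ is small).
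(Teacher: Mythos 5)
Your outline is essentially a reconstruction of Urban's own argument (Shimura's integral representation, a family of nearly overconvergent Eisenstein series, Urban's overconvergent projector on finite-slope subspaces, and the $\mathcal{F}$-isotypic Petersson projection producing the $\langle \mathcal{F}_{k_1}^{\circ}, \mathcal{F}_{k_1}^{\circ}\rangle_{N_1}$ normalisation), which is exactly what the paper invokes: its proof is a citation of Urban's Theorem 4.4.7, made valid by the corrections in Andreatta--Iovita's Appendix II, together with the remark that the $N=1$ computation generalises immediately and a pointer to Loeffler's computation of the Rankin--Selberg period. So the approach is essentially the same, the only difference being that the paper defers the technical heart (the projector and its interpolation in families) to the cited references rather than sketching it.
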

\begin{proof}
This follows from the interpolation property in \cite[Theorem 4.4.7]{Urban} (which is valid by the results of \cite[Appendix II]{AIFamilies}). The calculation is only for $N=1$ but its generalisation is immediate. See also the computation of the Rankin--Selberg period in \cite[Prop 2.10]{LoefflerRS}.
\end{proof}

In the following section we will recall the construction of the Beilinson--Flach classes in families. It turns out that this $p$-adic $L$-function $L_p$ is closely related to the images of these Beilinson--Flach classes under Perrin-Riou's ``big logarithm''. This will be important in Proposition \ref{proplinearindependence} later on. 


\section{Beilinson--Flach classes} \label{beilinson-flach}

In this section we recall the construction of classes 
\[
_c \invs{BF}_{m, 1}^{[\invs{F}, \invs{G}]} \in \opn{H}^1(\mbb{Q}(\mu_m), D^{\mathrm{la}}(\Gamma, M) )
\]
where $M = M_{V_1}(\invs{F})^* \hatot M_{V_2}(\invs{G})^*$ following \cite{lz-coleman}. These classes are obtained from so-called Rankin--Iwasawa classes under the pushforward of a certain sequence of morphisms. In particular we show that these classes satisfy certain norm relations which interpolate the (tame) Euler system relations at classical weights.

None of the results in this section are new, apart from perhaps Proposition \ref{normthm} and \S \ref{ESinFamilies}, although we suspect this is already known to the experts.

\subsection{Rankin--Iwasawa classes}

Let $(\invs{E}, P, Q)$ denote the universal triple over the curve $Y\defeq Y(m, mN)$ as defined in section \ref{ModularCurves}, and recall that
\[
\SH = \SH_{\mbb{Z}_p} \defeq \mbf{R}^1\pi_* \mbb{Z}_p(1) 
\] 
denotes the relative $p$-adic Tate module of $\invs{E}/Y$. Here $\pi\colon \invs{E} \to Y$ denotes the structure map and $\mbb{Z}_p(1)$ is the Tate twist by the cyclotomic character. This is a locally free \'{e}tale pro-sheaf on $Y$ of rank $2$.

Let $c \geq 1$ be an integer prime to $6mN$. In \cite{Kings2015}, Kings constructs Eisenstein classes ${_c\opn{Eis}^k_{\mbb{Q}_p}}$ arising from motivic classes whose de Rham realisations recover the usual Eisenstein series of weight $k+2$ (see also \cite[\S 4]{KLZ-modular}). In addition to this, he constructs so-called \emph{Eisenstein--Iwasawa} classes 
\[
{_c \invs{EI}_{m, mN}} \in \opn{H}^1_{\et}(Y(m, mN), \Lambda(\SH)(1) )
\]  
which interpolate ${ _c\opn{Eis}^k_{\mbb{Q}_p}}$ via the ``moment maps''
\[
\opn{mom}^k \colon \Lambda(\SH) \to \opn{TSym}^k \SH.
\]
From these classes one obtains Rankin--Iwasawa classes in the following way.

\begin{definition}
Let $c \geq 1$ be an integer that is coprime to $6mN$. We define the \emph{Rankin--Iwasawa} class to be 
\[
_c \invs{RI}_{m, mN, 1}^{[j]} \defeq \left[ (u_1)_* \circ \Delta_* \circ \opn{CG}^{[j]} \right](_c\invs{EI}_{m, mN})
\]
which lies in the cohomology group $\opn{H}^3_{\et}(Y(m, mN)^2, \Lambda^{[j, j]}(2-j) )$. Here  
\begin{itemize}
    \item $\opn{CG}^{[j]}$ is the Clebsch--Gordon map described in Definition \ref{CGdef}.
    \item $\Delta\colon Y(m, mN) \rightarrow Y(m, mN)^2$ denotes the diagonal embedding where, by abuse of notation, we write $Y(m, mN)^2$ for the fibre product 
    \[
    Y(m, mN) \times_{\mu_m^{\circ}} Y(m, mN).
    \]
    \item $u_1 \colon Y(m, mN)^2 \to Y(m, mN)^2$ denotes the automorphism which is the identity on the first factor and acts on the moduli interpretation as
    \[
    (E, P, Q) \mapsto (E, P + NQ, Q)
    \]
    on the second factor.
\end{itemize}
\end{definition}

The Rankin--Iwasawa classes satisfy the following norm compatibility relations.

\begin{proposition} \label{RankinIwasawaNorm}
Let $c \geq 1$ be an integer prime to $6Np$ and let $m$ be an integer prime to $6cN$. Let $l$ be a prime not dividing $6cNp$ and recall that we have defined the following morphism $\Xi_l\colon Y(lm, lmN) \to Y(m, mN)$ in (\ref{alphad}). 
\begin{enumerate}
\item If $l$ divides $m$ then the Rankin--Iwasawa classes satisfy the following norm compatibility relation
\[
(\Xi_l \times \Xi_l)_*(\rankiw{lm}{lmN}) = (U_l', U_l') \cdot {\rankiw{m}{mN}}.
\]
\item If $l$ does not divide $m$ then the Rankin--Iwasawa classes satisfy the following norm compatibility relation
\[
(\Xi_l \times \Xi_l)_*(\rankiw{lm}{lmN}) = \widetilde{Q}_l \cdot  {\rankiw{m}{mN}}
\]
where $\widetilde{Q}_l$ is the operator 
\begin{equation*}
\begin{aligned}
-l^j \sigma_l + (T_l', T_l') + ((l+1)l^j(\langle l \rangle^{-1} [l]_*, \langle l \rangle^{-1} [l]_*) - (\langle l \rangle^{-1} [l]_*, T_l'^2) - (T_l'^2, \langle l \rangle^{-1} [l]_*) )\sigma_l^{-1}  \\ \\ 
+ (\langle l^{-1} \rangle [l]_* T_l', \langle l^{-1} \rangle [l]_* T_l') \sigma_l^{-2} - l^{1+j}([l^2]_* \langle l^{-2} \rangle, [l^2]_* \langle l^{-2} \rangle)\sigma_l^{-3}  
\end{aligned}
\end{equation*}
and
\begin{itemize}
\item $T_l'$ (resp. $U_l'$) is the transpose of the usual Hecke operator $T_l$ (resp. $U_l$) on $Y(m, mN)$.
\item $[a]_* \colon \Lambda^{[j]}(\SH) \to \Lambda^{[j]}(\SH)$ is the map induced from multiplication by $a$ on the first factor and the identity on the second.
\item $\langle b \rangle$ is the diamond operator on $Y(m, mN)$ which acts on the moduli interpretation as $(E, P, Q) \mapsto (E, b^{-1}P, bQ)$.
\item $\sigma_l$ is the automorphism of $Y(m ,mN)$ which acts on the moduli interpretation as $(E, P, Q) \mapsto (E, lP, Q)$.
\end{itemize}
\end{enumerate}

\end{proposition}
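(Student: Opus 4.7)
The strategy is to reduce the statement to known norm-compatibility relations for the Eisenstein--Iwasawa classes $_c\invs{EI}_{m,mN}$ on a single modular curve, by carefully analysing how the pushforward $(\Xi_l \times \Xi_l)_*$ commutes with the three operations used to build the Rankin--Iwasawa class from $_c\invs{EI}$, namely $\opn{CG}^{[j]}$, $\Delta_*$, and $(u_1)_*$. The Clebsch--Gordon map is defined at the level of $U_0(p)$-modules and is therefore compatible with \emph{any} étale pushforward on the base, so it can be immediately pulled out. The automorphism $u_1$ is defined by a formula on the moduli interpretation that is compatible with the level-lowering map $\Xi_l$, so $(\Xi_l \times \Xi_l)_*$ and $(u_1)_*$ also commute, up to a relabelling of level structures that will contribute the translation operators appearing in the final formula.

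The crux is then to compute $(\Xi_l \times \Xi_l)_* \circ \Delta_*$ in terms of $\Delta_* \circ (\Xi_l)_*$. By proper base change applied to the Cartesian square obtained by pulling back the diagonal along $\Xi_l \times \Xi_l$, this reduces to identifying the components of the fibre product $Y(lm, lmN) \times_{Y(m,mN)^2} Y(m,mN)$. For case (1), when $l \mid m$, the map $\Xi_l \times \Xi_l$ is étale at level $l$ in a compatible way on both factors, so the only component is the diagonal $\Delta(Y(lm,lmN))$, and the projection formula together with the Eisenstein--Iwasawa norm relation $(\Xi_l)_*(_c\invs{EI}_{lm,lmN}) = U_l' \cdot {_c\invs{EI}_{m,mN}}$ (here one factor of $U_l'$ comes from the Eisenstein norm, and the second from the pushforward on the second factor inside $\Delta_*$) gives the claimed formula.

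For case (2), when $l \nmid m$, the fibre of $\Xi_l \times \Xi_l$ above the diagonal decomposes into several components indexed by the relative positions of the newly introduced level-$l$ structures, each isomorphic to $Y(lm,lmN)$ but differing from the usual diagonal by a twist $u_a$ for $a \in \mbb{Z}/l\mbb{Z}$, together with a "degenerate" component at infinity. A direct bookkeeping of these twisted diagonals, combined with the (more involved) Eisenstein--Iwasawa norm relation in the $l \nmid m$ case, which itself involves the Hecke operator $T_l'$, the cyclic isogeny $[l]_*$, and the Galois element $\sigma_l$, produces the four terms of $\widetilde Q_l$ after collecting according to the power of $\sigma_l$ (with $-l^j \sigma_l$ coming from the trivialisation of the $j$-th twist, and the $\sigma_l^{-3}$ term arising from the extra twist picked up by the Clebsch--Gordon map when $l$ is inverted).

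The main obstacle I anticipate is purely combinatorial: correctly identifying the contribution of each twisted diagonal to the various Hecke and diamond operators, and verifying that the coefficients collect into precisely the operator $\widetilde Q_l$ as written. In particular, the mixed terms $(l+1) l^j (\langle l \rangle^{-1}[l]_*, \langle l \rangle^{-1}[l]_*) - (\langle l \rangle^{-1}[l]_*, T_l'^2) - (T_l'^2, \langle l \rangle^{-1}[l]_*)$ sitting in front of $\sigma_l^{-1}$ look combinatorially delicate and will require expanding $T_l'$ on each factor via its double-coset decomposition at $l$ and then carefully matching cross-terms, essentially mirroring the calculation in \cite[\S 5]{lz-coleman}.
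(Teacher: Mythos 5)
Your outline attacks the wrong step and asserts two commutations that fail, and the failure is exactly where the content of the proposition lies. First, the "crux" you identify — analysing the fibre product of the diagonal under $\Xi_l \times \Xi_l$ — is actually trivial: since the same map is applied to both factors, one has $(\Xi_l \times \Xi_l)\circ \Delta = \Delta \circ \Xi_l$ on the nose, so $(\Xi_l\times\Xi_l)_*\Delta_* = \Delta_*(\Xi_l)_*$ with no component decomposition needed. Second, and more seriously, $(u_1)_*$ does \emph{not} commute with $(\Xi_l\times\Xi_l)_*$ "up to relabelling": on the second factor $u_1$ replaces $P$ by $P+NQ$, so the order-$l$ subgroup $\langle m(P+NQ)\rangle$ killed by $\Xi_l$ is different from $\langle mP\rangle$, and this non-commutation is precisely the source of the Hecke operators at $l$. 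Note also that the Eisenstein--Iwasawa norm relation in the $l\nmid m$ case involves only $\sigma_l$, $[l]_*$ and diamond-type operators, not $T_l'$; so if your three claimed compatibilities (CG, $u_1$, diagonal) held as stated, the result would be $\Delta_*$ of a single-curve operator applied to ${_c\invs{EI}_{m,mN}}$, which can never produce the asymmetric terms $(T_l'^2, \langle l\rangle^{-1}[l]_*)$ and $(\langle l\rangle^{-1}[l]_*, T_l'^2)$ in $\widetilde{Q}_l$. The "combinatorial bookkeeping" you defer is therefore not a residual check but the entire proof, and your framework as set up cannot yield it.

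For comparison, the paper does not redo this computation at all: it factors $(\Xi_l\times\Xi_l)_*$ as $(\opn{pr}_1\times\opn{pr}_1)_*(\hat{\opn{pr}}_2\times\hat{\opn{pr}}_2)_*$ in the notation of \cite[\S 5]{KLZ2015}, checks in case (1) that $(\opn{pr}_1\times\opn{pr}_1)_*$ commutes with $(U_l',U_l')$, and then quotes Theorems 5.3.1 and 5.4.1 (for $l\mid m$) and Proposition 5.6.1 (for $l\nmid m$) of \emph{op.~cit.}, where the double-coset/moduli analysis of the interaction between the diagonal, the twist $u_1$ and the level-$l$ structure is carried out. Even your case (1) heuristic ("one $U_l'$ from the Eisenstein norm, one from the second factor") glosses over the fact that $\Delta_*$ does not intertwine a single-curve operator with the pair operator $(U_l',U_l')$; that compatibility is a genuine check, not a formality. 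If you want a self-contained proof rather than a citation, you would need to reproduce the argument of \cite[Thm.~3.5.1]{BeilinsonFlach} / \cite[\S 5]{KLZ2015}, i.e.\ expand the pushforward of the $u_1$-twisted diagonal at level $lm$ into Hecke correspondences on $Y(m,mN)^2$ and only then collect terms by powers of $\sigma_l$.
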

\begin{proof}
In the notation of \cite[\S 5]{KLZ2015}, the map $(\Xi_l \times \Xi_l)_*$ is the composition $(\opn{pr}_1 \times \opn{pr}_1)_* (\hat{\opn{pr}}_2 \times \hat{\opn{pr}}_2)_*$ and if $l | m$ one can check that $(\opn{pr}_1 \times \opn{pr}_1)_*$ commutes with $(U_l', U_l')$. The first part then follows by combining the norm relations in Theorem 5.3.1 and Theorem 5.4.1 in \emph{op.~cit.} For the second part, this is just Proposition 5.6.1 in \emph{op.~cit.}
\end{proof}

\subsection{Beilinson--Flach classes in families} \label{bfclasses}
Let $E$ be a finite extension of $\mbb{Q}_p$ with ring of integers $\ordd_E$, and $\invs{U}_1, \invs{U}_2 \subset (\invs{W}_0)_E$ two wide open discs, where $\invs{W}_0 \subset \invs{W}$ is the wide open subspace of $0$-accessible weights (see Definition \ref{accessible}). Recall that $\Lambda(\SH'_0)$ and $\DD_{\UU_i}^{\circ}(\SH'_0) := \DD_{\UU_i, 0}^{\circ}(\SH'_0)$ are the sheaves of continuous (resp. locally analytic) $\mbb{Z}_p$-valued (resp. $\Lambda_{\UU_i}$-valued) distributions on $\SH'_0$, the subsheaf of $\SH$ which is locally isomorphic to $T'_0$.

Consider the map of sheaves 
\begin{equation} \label{sheafmap}
\Lambda^{[j, j]}(\SH_0') \rightarrow \invs{D}_{[\invs{U}_1, \invs{U}_2]}(\SH_0') \defeq \left( \invs{D}^{\circ}_{\invs{U}_i}(\SH_0') \boxtimes \invs{D}^{\circ}_{\invs{U}_2}(\SH_0') \left) \left[ \frac{1}{p} \right] \right. \right.
\end{equation} 
induced from the composition
\begin{equation} \label{OverconvergentComp}
\Lambda(\SH_0') \otimes \opn{TSym}^j\SH \to \invs{D}^{\circ}_{\invs{U}_i - j}(\SH_0') \otimes \opn{TSym}^j\SH \xrightarrow{\delta^*_j} \invs{D}_{\invs{U}_i}(\SH_0')
\end{equation}
described in \cite[Definition 5.3.1]{lz-coleman}. We will not need an explicit description of these maps, but we do note that we have the following commutative diagram.
\begin{lemma} \label{OverconvergentDiagram} 
We have the following commutative diagram of sheaves
\[
\begin{tikzcd}
\Lambda(\SH_0') \otimes \opn{TSym}^j\SH \arrow[r] \arrow[d, "{[l]_* \otimes \;  \opn{id}}"'] & \invs{D}_{\invs{U}_i}(\SH_0') \arrow[d, "l^{\kappa_i-j}"] \\
\Lambda(\SH_0') \otimes \opn{TSym}^j\SH \arrow[r] & \invs{D}_{\invs{U}_i}(\SH_0')
\end{tikzcd}
\]
where the horizontal arrows are the composition in (\ref{OverconvergentComp}) and, as usual, $l$ is a prime not dividing $Np$ and $\kappa_i$ is the universal character of $\invs{U}_i$.
\end{lemma}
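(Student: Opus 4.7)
The plan is to reduce the assertion to a pointwise statement about distributions and then trace through the definitions, using only the homogeneity condition on the test functions and the linearity of $\delta_j^*$.

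First, I would note that the commutativity is a local statement on $Y$, so after trivialising $\SH$ over the pro-\'etale cover $Y(p^{\infty}, p^{\infty}N) \to Y$, it suffices to check the diagram at the level of the underlying $U_0(p)$-modules. Concretely, I need to show that if $\mu \otimes v \in \Lambda(T_0') \otimes \TSym^j H$, then the composition in (\ref{OverconvergentComp}) sends $([l]_* \mu) \otimes v$ to $l^{\kappa_i - j}$ times the image of $\mu \otimes v$, where $[l]_*$ is the pushforward under scalar multiplication by $l$ and $l^{\kappa_i - j}$ means $\kappa_i(l) l^{-j} \in \Lambda_{\invs{U}_i}[1/p]$.

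The key observation concerns the first map $\Lambda(T_0') \to \invs{D}^{\circ}_{\invs{U}_i - j}(T_0')$. By construction this is dual to the inclusion of the subspace $A^{\circ}_{\invs{U}_i - j, 0}(T_0') \hookrightarrow C(T_0', \Lambda_{\invs{U}_i})$ of functions homogeneous of weight $\kappa_i - j$. For any $f \in A^{\circ}_{\invs{U}_i - j, 0}(T_0')$, homogeneity gives
\[
\int_{T_0'} f(x) \, d([l]_* \mu)(x) = \int_{T_0'} f(lx) \, d\mu(x) = (\kappa_i - j)(l) \int_{T_0'} f(x) \, d\mu(x).
\]
Hence under the restriction $\Lambda(T_0') \to \invs{D}^{\circ}_{\invs{U}_i - j}(T_0')$, the operator $[l]_*$ on the source becomes the scalar $\kappa_i(l) l^{-j}$ on the target. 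I would then invoke the fact that $\delta_j^*$, being induced from multiplication of a distribution by a fixed polynomial $P_v \in \TSym^j H$, is $\Lambda_{\invs{U}_i}[1/p]$-linear in the distribution variable; the scalar $\kappa_i(l) l^{-j}$ therefore commutes past $\delta_j^*$ and appears as $l^{\kappa_i - j}$ in $\invs{D}_{\invs{U}_i}(T_0')$, yielding the desired commutativity.

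The only mildly tricky step is making the weight bookkeeping unambiguous: one has to be careful that the ``weight $\kappa_i - j$'' on the intermediate distribution module is matched correctly with the degree $j$ shift introduced by $\delta_j^*$ so that the final output has weight $\kappa_i$, and that the character $\kappa_i - j$ really does evaluate to $\kappa_i(l) l^{-j}$ at $l$ (rather than, say, to $\kappa_i(l) - j$ under some additive convention). Once this is fixed, the verification is routine, and no deeper input is required beyond the definitions of $A^{\circ}_{w,m}$ and $\delta_j^*$ recalled in Definition \ref{locally analytic} and \cite[\S 5.3]{lz-coleman}.
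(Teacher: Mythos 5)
Your proof is correct and takes essentially the same route as the paper: the paper also reduces to an \'{e}tale-local check and deduces the diagram from the homogeneity condition in the definition of the distribution modules together with the relation $\delta_j^*\circ([l]_*\otimes \opn{id}) = l^{-j}([l]_*\otimes \opn{id})\circ\delta_j^*$. Your bookkeeping --- applying homogeneity at the intermediate weight $\kappa_i - j$ (so $[l]_*$ becomes the scalar $\kappa_i(l)l^{-j}$ under the restriction map) and then pushing the scalar through $\delta_j^*$ by $\Lambda_{\invs{U}_i}[1/p]$-linearity --- is just a repackaging of those same two facts.
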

\begin{proof}
One can check this \'{e}tale locally and this follows from the homogeneity condition in the definition of $D^{\circ}_{\invs{U}_i}$ and the fact that $\delta_j^* \circ ([l]_* \otimes \opn{id}) = l^{-j}([l]_* \otimes \opn{id}) \circ \delta_j^*$.
\end{proof}

We are now in a position to define the Beilinson--Flach classes. Consider the following composition, which we will denote by $\tau_m$
\[
\tau_m\colon Y(m, mNp)^2 \xrightarrow{t_m \times t_m} Y_1(Np)^2 \times \mu_m^{\circ} \rightarrow Y_1(N_1(p)) \times Y_1(N_2(p)) \times \mu_m^{\circ}
\] 
where $t_m \colon Y(m, mNp) \to Y_1(Np) \times \mu_m^{\circ}$ is the map defined in (\ref{teem}) and the second map is induced from the maps $s_{N_i}\colon Y_1(Np) \to Y_1(N_i(p))$ as defined in (\ref{esseN}).

\begin{definition}
We define the \emph{Beilinson--Flach} class
\[
\beilf{m} \in \opn{H}^3_{\et}(Y_1(N_1(p)) \times Y_1(N_2(p)) \times \mu_m^{\circ}, \invs{D}_{[\invs{U}_1, \invs{U}_2]}(\SH_0')(2-j) )
\]
to be the pushforward of $\rankiw{m}{mNp}$ under $\tau_m$ composed with the map in (\ref{sheafmap}).
\end{definition}

Let $\invs{F}$ and $\invs{G}$ be Coleman families over $\invs{U}_1$ and $\invs{U}_2$ respectively. To specialise the Beilinson--Flach classes at $\invs{F}$ and $\invs{G}$ one introduces the following differential operators ${\nabla_i \choose j}$ on $\Lambda_{\invs{U}_i}[1/p]$ given by the formula 
\[
{\nabla_i \choose j} \defeq \frac{1}{j!}\prod_{k = 0}^{j-1}{(\nabla_i - k)}
\]
where $\nabla_i$ is given by $(\nabla_if)(x) = \left. \frac{d}{dt}f(tx) \right|_{t=1}$ (see \cite[Proposition 5.1.2]{lz-coleman} for more details). Let $V_1$ and $V_2$ be open affinoid subdomains in $\invs{U}_1$ and $\invs{U}_2$ respectively and $j \geq 0$ an integer not contained in either $V_1$ or $V_2$. Then the operators ${\nabla_1 \choose j}$ and ${\nabla_2 \choose j}$ are injective and there exist unique classes 
\[
\beilfj{m}{\invs{G}} \in \opn{H}^1\left(\mbb{Q}(\mu_m), [M_{V_1}(\invs{F})^* \hatot M_{V_2}(\invs{G})^*](-j) \right)
\]
such that ${\nabla_1 \choose j}{\nabla_2 \choose j} {\beilfj{m}{\invs{G}}}$ equals the image of $\beilf{m}$ under the Abel--Jacobi map $\opn{AJ}_{\invs{F}, \invs{G}}$ defined below.

\begin{definition} 
The Abel--Jacobi map $\opn{AJ}_{\invs{F}, \invs{G}}$ is defined to be the composition 
\[
\begin{aligned}
\opn{H}^3_{\et}(Y_1(N_1(p)) & \times Y_1(N_2(p)) \times \mu_m^{\circ}, \invs{D}_{[\invs{U}_1, \invs{U}_2]}(\SH_0')(2-j)) \\
& \longrightarrow  \opn{H}^1\left(\mbb{Q}(\mu_m), \opn{H}^2_{\et}(Y_1(N_1(p))_{\bar{\mbb{Q}}} \times Y_1(N_2(p))_{\bar{\mbb{Q}}}, \invs{D}_{[\invs{U}_1, \invs{U}_2]}(\SH_0')(2-j) \right) \\
& \xrightarrow{\sim} \opn{H}^1(\mbb{Q}(\mu_m), \opn{H}^1_{\et}(Y_1(N_1(p))_{\bar{\mbb{Q}}}, \invs{D}_{\invs{U}_1}(\SH_0')(1)) \hatot \opn{H}^1_{\et}(Y_1(N_2(p))_{\bar{\mbb{Q}}}, \invs{D}_{\invs{U}_2}(\SH_0')(1))(-j)) \\
& \longrightarrow \opn{H}^1\left(\mbb{Q}(\mu_m), \left[ M_{V_1}(\invs{F})^* \hatot M_{V_2}(\invs{G})^* \right] (-j) \right)
\end{aligned}
\]
where the first map arises from the Leray spectral sequence (using the fact that $Y_1(N_1(p))_{\bar{\mbb{Q}}} \times Y_1(N_2(p))_{\bar{\mbb{Q}}}$ is an affine scheme, so its \'{e}tale cohomology vanishes in degree $3$ and above); the second isomorphism is the K\"{u}nneth formula (again using the fact that $Y_1(N_i(p))_{\bar{\mbb{Q}}}$ is affine) and the third map is the projection down to the Galois representations associated to $\invs{F}$ and $\invs{G}$.
\end{definition}

The Beilinson--Flach classes associated to $\invs{F}$ and $\invs{G}$ satisfy norm compatibility relations similar to those for the Rankin--Iwasawa classes.

\begin{proposition} \label{normrelationfortwo}
Let $c \geq 1$ be an integer prime to $6Np$ and let $m$ be an integer prime to $6cN$. Let $l$ be a prime not dividing $6cNp$ and let $\invs{F}$ and $\invs{G}$ be two Coleman families over the affinoid subdomains $V_1$ and $V_2$ respecively. Suppose that $j \geq 0$ is an integer not contained in $V_1$ or $V_2$. 
\begin{enumerate}
\item If $l$ divides $m$ then the Beilinson--Flach classes satisfy the following norm-compatibility relation
\[
\opn{cores}^{\mbb{Q}(\mu_{ml})}_{\mbb{Q}(\mu_m)} \left( \beilfj{ml}{\invs{G}} \right) = \left(a_l(\invs{F})a_l(\invs{G})\right){\beilfj{m}{\invs{G}}}
\]
\item If $l$ doesn't divide $m$ then the Beilinson--Flach classes satisfy the following norm-compatibility relation
\[
\opn{cores}^{\mbb{Q}(\mu_{ml})}_{\mbb{Q}(\mu_m)} \left( \beilfj{ml}{\invs{G}} \right) = \invs{Q}_l(l^{-j}\sigma_l^{-1}) \cdot {\beilfj{m}{\invs{G}}}
\]
where $\invs{Q}_l(X) \in \ordd(V_1 \times V_2)[X, X^{-1}]$ is the polynomial
\begin{equation*}
\begin{aligned}
\invs{Q}_l(X) = &  -X^{-1} + a_l(\invs{F})a_l(\invs{G}) \\ \\ &  + ((l+1)l^{\kappa_1 + \kappa_2}\varepsilon_{\invs{F}}(l)\varepsilon_{\invs{G}}(l) - l^{\kappa_1}\varepsilon_{\invs{F}}(l)a_l(\invs{G})^2 - l^{\kappa_2}\varepsilon_{\invs{G}}(l)a_l(\invs{F})^2)X \\ \\ & + l^{\kappa_1 + \kappa_2}\varepsilon_{\invs{F}}(l)\varepsilon_{\invs{G}}(l)a_l(\invs{F})a_l(\invs{G}) X^2 - l^{1+2\kappa_1 + 2\kappa_2}\varepsilon_{\invs{F}}(l^{2})\varepsilon_{\invs{G}}(l^{2})X^3 
\end{aligned}
\end{equation*}
where, as before, $\sigma_l$ is the image of the arithmetic Frobenius at $l$ in $\Gal(\mbb{Q}(\mu_m)/\mbb{Q})$.
\end{enumerate}
\end{proposition}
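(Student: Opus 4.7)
The plan is to deduce both norm relations directly from the Rankin--Iwasawa norm relations of Proposition \ref{RankinIwasawaNorm} by pushing forward along $\tau_m$, applying the overconvergent projection of (\ref{sheafmap}), and finally projecting to the $(\invs{F},\invs{G})$-isotypic quotient. The essential point is that all operators appearing on the right-hand side of Proposition \ref{RankinIwasawaNorm} are functorial with respect to this sequence of maps, so one only needs to identify what each of them becomes after specialising.

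First I would verify that the diagram
\[
\begin{tikzcd}
Y(lm,lmNp)^2 \arrow[r,"\Xi_l \times \Xi_l"] \arrow[d,"\tau_{lm}"'] & Y(m,mNp)^2 \arrow[d,"\tau_m"] \\
Y_1(N_1(p))\times Y_1(N_2(p))\times \mu_{lm}^\circ \arrow[r] & Y_1(N_1(p))\times Y_1(N_2(p))\times \mu_m^\circ
\end{tikzcd}
\]
commutes, where the bottom arrow is induced by the $l$-th power on roots of unity; this is Lemma \ref{CoresCompatibility} together with the obvious compatibilities of the $s_{N_i}$ maps. Together with the projection formula, this reduces the pushforward $\opn{cores}^{\mbb{Q}(\mu_{ml})}_{\mbb{Q}(\mu_m)}$ on Beilinson--Flach classes to $(\Xi_l\times\Xi_l)_*$ on Rankin--Iwasawa classes.

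Next I would translate each operator. On the $(\invs{F},\invs{G})$-isotypic component of $M_{V_1}(\invs{F})^*\hatot M_{V_2}(\invs{G})^*$, the transpose Hecke operators $T_l'$ and $U_l'$ act by the eigenvalues $a_l(\invs{F}),\,a_l(\invs{G})$ and $a_p(\invs{F}),\,a_p(\invs{G})$ respectively, and the diamond operator $\langle l\rangle$ acts by the nebentypus values $\varepsilon_{\invs{F}}(l),\,\varepsilon_{\invs{G}}(l)$. The automorphism $\sigma_l$ of $Y(m,mN)$ translates, after passing through $t_m$, into the arithmetic Frobenius at $l$ acting on the $\mu_m^\circ$-factor, hence gives the operator $\sigma_l$ on Galois cohomology. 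Finally, the $[l]_*$ operators, by Lemma \ref{OverconvergentDiagram} applied to each factor, become multiplication by $l^{\kappa_i-j}$ once composed with the map (\ref{sheafmap}); the extra Tate twist $(2-j)$ and the normalisation of the $\sigma_l^{\pm}$ by $l^{-j}$ (built into the argument of $\invs{Q}_l$) are tracked by the $(-j)$ in the definition of $\beilfj{m}{\invs{G}}$ and the $\opn{CG}^{[j]}$-twist.

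Case (1), where $l\mid m$, is then straightforward: the $(\opn{pr}_1\times\opn{pr}_1)_*$ part of $(\Xi_l\times\Xi_l)_*$ commutes with $(U_l',U_l')$ as recalled in the proof of Proposition \ref{RankinIwasawaNorm}, so after applying the composition $\opn{AJ}_{\invs{F},\invs{G}}\circ\tau_{m,*}$ and inverting the injective operator ${\nabla_1\choose j}{\nabla_2\choose j}$ we get the factor $a_l(\invs{F})a_l(\invs{G})$ exactly. For case (2), I would substitute the specialisations above into the four-term operator displayed in Proposition \ref{RankinIwasawaNorm}: the term $-l^j\sigma_l$ yields $-l^j\sigma_l$ (which after rewriting as $-(l^{-j}\sigma_l^{-1})^{-1}$ matches the $-X^{-1}$ term of $\invs{Q}_l$), the $(T_l',T_l')$ term gives $a_l(\invs{F})a_l(\invs{G})$, and the remaining $\sigma_l^{-1},\sigma_l^{-2},\sigma_l^{-3}$ terms rearrange into the stated polynomial in $X=l^{-j}\sigma_l^{-1}$ after collecting the $l^{\kappa_i-j}$-factors from $[l]_*$ with the diamond operator contributions. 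The uniqueness of the classes characterised by ${\nabla_1\choose j}{\nabla_2\choose j}$ then lifts the identity to $\beilfj{m}{\invs{G}}$.

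The only genuinely delicate step is the bookkeeping of the $l^{\kappa_i-j}$ factors coming from Lemma \ref{OverconvergentDiagram} against the Tate twists and the $\opn{CG}^{[j]}$-shift; this is where errors are easiest to make, but it is a purely mechanical matching of exponents and nebentypus characters against the four monomials in $\invs{Q}_l(X)$.
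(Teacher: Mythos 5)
Your proposal is correct and follows essentially the same route as the paper's proof: reduce corestriction to $(\Xi_l\times\Xi_l)_*$ via Lemma \ref{CoresCompatibility}, apply Proposition \ref{RankinIwasawaNorm}, translate the Hecke, diamond, $\sigma_l$ and $[l]_*$ operators (the last via Lemma \ref{OverconvergentDiagram}) on the $(\invs{F},\invs{G})$-isotypic quotient, and conclude by invertibility of ${\nabla_1\choose j}{\nabla_2\choose j}$ since $j\notin V_1,V_2$. Two small slips in your operator dictionary, which do not affect the structure of the argument but would matter in the mechanical matching: $U_l'$ acts by $a_l(\invs{F})$, $a_l(\invs{G})$ (not $a_p$), being compatible with $T_l'$ under $t_m$; and on the duals $M(\invs{F})^*$, $M(\invs{G})^*$ the diamond operator $\langle l\rangle$ acts by $\varepsilon_{\invs{F}}(l)^{-1}$, $\varepsilon_{\invs{G}}(l)^{-1}$ rather than by the nebentypus itself --- it is exactly because $\widetilde{Q}_l$ involves $\langle l\rangle^{-1}$ that the stated $\invs{Q}_l$ carries positive powers of $\varepsilon_{\invs{F}}(l)\varepsilon_{\invs{G}}(l)$.
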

\begin{proof}
Consider the composition 
\[
\begin{aligned}
\opn{H}^3_{\et}(Y(m, mNp)^2, & \Lambda^{[j, j]}(2-j) ) \\
& \xrightarrow{\tau_{m, *}} \opn{H}^3_{\et}(Y_1(N_1(p)) \times Y_1(N_2(p)) \times \mu_m^{\circ}, \Lambda^{[j, j]}(\SH_0') ) \\
& \longrightarrow \opn{H}^3_{\et}(Y_1(N_1(p)) \times Y_1(N_2(p)) \times \mu_m^{\circ}, \invs{D}_{[\invs{U}_1, \invs{U}_2]}(\SH_0')).
\end{aligned}
\]
By applying Lemma \ref{CoresCompatibility}, the morphisms $(\Xi_l)_*$ and $\opn{cores}^{\mbb{Q}(\mu_{lm})}_{\mbb{Q}(\mu_m)}$ are compatible under the map 
\begin{equation} \label{AJCompMap}
{\opn{H}^3_{\et}(Y(m, mN)^2, \Lambda^{[j, j]}(2-j))} \to {\opn{H}^1(\mbb{Q}(\mu_m), [M(\invs{F})^* \hatot M(\invs{G})^*](-j))}
\end{equation}
obtained by composing the above map with $\opn{AJ}_{\invs{F}, \invs{G}}$.

Immediately we see that if $l$ divides $m$ then
\[
\begin{aligned}
{\nabla_1 \choose j} {\nabla_2 \choose j} \opn{cores}^{\mbb{Q}(\mu_{ml})}_{\mbb{Q}(\mu_m)} {\beilfj{lm}{\invs{G}}} & = \opn{cores}^{\mbb{Q}(\mu_{ml})}_{\mbb{Q}(\mu_m)} {\nabla_1 \choose j} {\nabla_2 \choose j} {\beilfj{lm}{\invs{G}}}  \\
& = (a_l(\invs{F})a_l(\invs{G})){\nabla_1 \choose j} {\nabla_2 \choose j}  {\beilfj{m}{\invs{G}}}
\end{aligned}
\]
where the second equality follows from part 1 of Proposition \ref{RankinIwasawaNorm} and the fact that $T_l'$ acts as multiplication by $a_l(\invs{F})$ (resp. $a_l(\invs{G})$) on $M(\invs{F})^*$ (resp. $M(\invs{G})^*$). Note that under the morphism $t_m$ the operators $U_l'$ and $T_l'$ are compatible. Since $j$ is not contained in $V_1$ or $V_2$, the operator ${\nabla_1 \choose j} {\nabla_2 \choose j}$ is invertible and we have the required relation.

For the second part, recall that by part 2 in Proposition \ref{RankinIwasawaNorm}, the Rankin--Iwasawa classes satisfy $(\Xi_l \times \Xi_l)_*(\rankiw{lm}{lmN}) = \widetilde{Q}_l \cdot  {\rankiw{m}{mN}}$. We have the following commutative diagram:
\[
\begin{tikzcd}
{\opn{H}^3_{\et}(Y(m, mpN)^2, \Lambda^{[j, j]}(2-j) )} \arrow[r] \arrow[d] \arrow[d, "\widetilde{Q}_l"'] & {\opn{H}^1(\mbb{Q}(\mu_m), [M(\invs{F})^* \hatot M(\invs{G})^*](-j) )} \arrow[d] \arrow[d, "\invs{Q}_l(l^{-j}\sigma_l^{-1})"] \\
{\opn{H}^3_{\et}(Y(m, mpN)^2, \Lambda^{[j, j]}(2-j) )} \arrow[r] & {\opn{H}^1(\mbb{Q}(\mu_m), [M(\invs{F})^* \hatot M(\invs{G})^*](-j))}
\end{tikzcd}
\] 
where the horizontal arrows are the maps in (\ref{AJCompMap}). Indeed, $M(\invs{F})^*$ can be described as the quotient of
\[
\opn{H}^3_{\et}(Y_1(N_1(p))_{\bar{\mbb{Q}}}, \invs{D}_{V_1}(1))
\]
such that $T'_l$ acts as multiplication by $a_l(\invs{F})$ and $\langle l \rangle$ acts as multiplication by $\varepsilon_{\invs{F}}(l)^{-1}$. We have a similar description for $M(\invs{G})^*$. Furthermore, the action of $\sigma_l$ becomes the natural action of $\sigma_l$ under the horizontal map in the above diagram (this is an application of the push-pull lemma for \'{e}tale cohomology). Finally, by Lemma \ref{OverconvergentDiagram}, $[l]_*$ becomes multiplication by $l^{\kappa_i - j}$. This shows that the above diagram is commutative and completes the proof of the proposition. 
\end{proof}

\subsection{Interpolation in the cyclotomic variable} \label{Interpolation}

In this section we recall how to interpolate the Beilinson--Flach classes $\beilfj{m}{\invs{G}}$ in the cyclotomic variable $j$. As a consequence, we show that the three-variable Beilinson--Flach classes satisfy a norm-compatibility relation closely related to the Euler system relations.  

Let $p^{-\lambda_1} = ||a_p(\invs{F})||$ (resp. $p^{-\lambda_2} = ||a_p(\invs{G})||$) where $||\cdot ||$ denotes the canonical supremum norm on $\ordd(V_i)$ (which exists because we have restricted our Coleman families to reduced affinoid subdomains). Let $\lambda = \lambda_1 + \lambda_2$ and $h \geq \lambda$ a positive integer. Define the following elements 
\[
x_{n, j} \defeq (a_p(\invs{F})a_p(\invs{G}))^{-n} \frac{\beilfj{mp^n}{\invs{G}}}{(-1)^jj!}
\]
for $0 \leq j \leq h$, $n \geq 1$, and set
\[
x_{0, j} \defeq \left(1 - \frac{p^j}{a_p(\invs{F})a_p(\invs{G})} \right) \frac{\beilfj{m}{\invs{G}}}{(-1)^jj!}
\]
for $0 \leq j \leq h$. These elements are compatible under corestriction and satisfy a certain growth bound (see \cite[Proposition 5.4.1]{lz-coleman}), so by Proposition 2.3.3 in \emph{op.~cit.} there exists a \emph{unique} element 
\[
\beilfd{m}{\invs{G}} \in \opn{H}^1(\mbb{Q}(\mu_{mp^{\infty}}), D_{\lambda}(\Gamma, M))^{\Gamma} \cong \opn{H}^1(\mbb{Q}(\mu_m), D_{\lambda}(\Gamma, M))
\]
satisfying 
\[
\int_{\Gamma_n} \chi_{\mathrm{cycl}}^j \; {\beilfd{m}{\invs{G}}} = x_{n, j}
\]
for all $n, j$. Here $M = M_{V_1}(\invs{F})^* \hatot M_{V_2}(\invs{G})^*$ and $\Gamma_n \subset \Gamma$ is the unique subgroup of index $p^{n-1}(p-1)$ (we set $\Gamma_0 = \Gamma$). The class $\beilfd{m}{\invs{G}}$ is independent of the choice of $h$.

\begin{proposition} \label{normthm}
The Beilinson--Flach classes $\beilfd{m}{\invs{G}}$ satisfy the following norm compatibility relations:
\begin{itemize}
\item If $l$ divides $m$ then
\[
\opn{cores}^{\mbb{Q}(\mu_{lm})}_{\mbb{Q}(\mu_m)} \left( \beilfd{lm}{\invs{G}} \right) = (a_l(\invs{F})a_l(\invs{G})) \cdot {\beilfd{m}{\invs{G}}}.
\]
\item If $l$ doesn't divide $m$ then
\[
\opn{cores}^{\mbb{Q}(\mu_{lm})}_{\mbb{Q}(\mu_m)} \left( \beilfd{lm}{\invs{G}} \right) = \invs{Q}_l(l^{-\mathbf{j}}\sigma_l^{-1}) \cdot {\beilfd{m}{\invs{G}}}
\]
where $\invs{Q}_l(X)$ is the polynomial defined in Proposition \ref{normrelationfortwo} and $\mathbf{j}$ is the universal character $\Gamma \to D_{\lambda}(\Gamma, E)$ (i.e. the homomorphism taking $x \mapsto \opn{ev}_x$ where $\opn{ev}_x$ is the evaluation-at-$x$ map).
\end{itemize}
\end{proposition}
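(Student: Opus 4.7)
The plan is to exploit the uniqueness part of \cite[Prop.~2.3.3]{lz-coleman}: the class $\beilfd{m}{\invs{G}}$ is the unique element of $\opn{H}^1(\mbb{Q}(\mu_m), D_\lambda(\Gamma, M))$ whose moments $\int_{\Gamma_n} \chi_{\mathrm{cycl}}^j$ produce the prescribed classes $x_{n,j}$. I will therefore verify the two identities by computing the moments of both sides against $\chi_{\mathrm{cycl}}^j$ on $\Gamma_n$ for all admissible $(n,j)$, and then conclude by this uniqueness.

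For the first case ($l\mid m$), applying $\int_{\Gamma_n}\chi_{\mathrm{cycl}}^j$ to the left-hand side and using the fact that corestriction commutes with the Mellin transform yields $\opn{cores}^{\mbb{Q}(\mu_{lmp^n})}_{\mbb{Q}(\mu_{mp^n})}$ applied to the relevant moment of $\beilfd{lm}{\invs{G}}$. Since $l \neq p$, the divisibility $l\mid m$ propagates to $l\mid mp^n$, so Proposition~\ref{normrelationfortwo}(1) gives
\[
\opn{cores}^{\mbb{Q}(\mu_{lmp^n})}_{\mbb{Q}(\mu_{mp^n})}\bigl(\beilfj{lmp^n}{\invs{G}}\bigr) = a_l(\invs{F})a_l(\invs{G})\cdot\beilfj{mp^n}{\invs{G}}
\]
for every integer $j \geq 0$ outside $V_1\cup V_2$. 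The normalising factors $(a_p(\invs{F})a_p(\invs{G}))^{-n}/((-1)^j j!)$ (together with the additional Euler factor $(1 - p^j/(a_p(\invs{F})a_p(\invs{G})))$ when $n=0$) appear identically on both sides, so the moments of the two candidate classes agree. The second case proceeds analogously, using part~(2) of Proposition~\ref{normrelationfortwo}; here one must interpret the operator $\invs{Q}_l(l^{-\mathbf{j}}\sigma_l^{-1})$ acting on $D_\lambda(\Gamma, M)$ via the property that under the moment map $\int_{\Gamma_n}\chi_{\mathrm{cycl}}^j$ the universal twist $\mathbf{j}$ specialises to the integer $j$, while $\sigma_l^{-1}$ continues to act through the natural Galois action on $\opn{H}^1(\mbb{Q}(\mu_{mp^n}), M(-j))$. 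The push--pull argument used at the end of the proof of Proposition~\ref{normrelationfortwo} ensures that this specialisation is the genuine action of $\sigma_l^{-1} \in \opn{Gal}(\mbb{Q}(\mu_{mp^n})/\mbb{Q})$.

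The main obstacle will be the final invocation of uniqueness: Proposition~\ref{normrelationfortwo} is only available for integers $j\notin V_1\cup V_2$, so only on a cofinite (rather than full) set of weights can the moments be shown to agree directly. I will need to confirm that the characterisation in \cite[Prop.~2.3.3]{lz-coleman} still pins down a unique class when the moment data is restricted in this way, exploiting that $\{\chi_{\mathrm{cycl}}^j : j\notin V_1\cup V_2\}$ remains a dense family of characters of $\Gamma$; alternatively, one can argue that the ``missing'' values $j\in V_1\cup V_2$ can be filled in by continuity, since both sides of the claimed identity lie in the same finitely generated Banach module over $\ordd(V_1\times V_2)$, on which the operators in question vary analytically in $j$. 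Once uniqueness is secured, the remainder of the proof is the bookkeeping of normalising factors described above.
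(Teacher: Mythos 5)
Your argument is essentially the paper's own proof: there one sets $\nu_m$ equal to the difference of the two sides of the claimed identity, observes that every moment $\int_{\Gamma_n}\chi_{\mathrm{cycl}}^j\,\nu_m$ vanishes by Proposition \ref{normrelationfortwo} (corestriction commuting with the construction), and concludes $\nu_m=0$ from the uniqueness statement of \cite[Prop.~2.3.3]{lz-coleman}. The obstacle you flag at the end is not a genuine one: the uniqueness criterion only involves the moments for $0 \leq j \leq h$ and all $n \geq 0$, and these are exactly the finitely many integers already used to define ${\beilfd{m}{\invs{G}}}$ itself (hence already assumed to avoid $V_1$ and $V_2$), so no density-of-characters or continuity argument is required.
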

\begin{proof}
Let $\invs{Q}_l(X)$ denote the polynomial appearing in Proposition \ref{normrelationfortwo} and let $l$ be a prime not dividing $6mpNc$. Set 
\[
\nu_m = \invs{Q}_l(l^{-\mathbf{j}}\sigma_l^{-1}) \cdot {\beilfd{m}{\invs{G}}} - \opn{cores}^{\mbb{Q}(\mu_{lm})}_{\mbb{Q}(\mu_m)} \left( \beilfd{lm}{\invs{G}} \right).
\]  
Then for all $n,j \geq 0$ the specialisation $\int_{\Gamma_n}{\chi^j \nu_m}$ is zero; so $\nu_m$ interpolates only zero classes. By uniqueness, this implies that $\nu_m = 0$. A similar argument works for $l | m$.
\end{proof}

\subsection{Euler system relations in families} \label{ESinFamilies}

In Proposition \ref{normthm}, we showed that the Beilinson--Flach classes satisfy norm compatible relations. It turns out that we can adjust these classes so that we obtain cohomology classes satisfying the Euler system relations. 

As before, let $M = M_{V_1}(\invs{F})^* \hatot M_{V_2}(\invs{G})^*$ and let $\invs{P}_l(X)$ denote the polynomial $\opn{det}\left(1 - \opn{Frob}_l^{-1} X | M^*(1) \right)$, where $\opn{Frob}_l$ denotes any lift of the arithmetic Frobenius at $l$. Then one can observe that
\[
\invs{Q}_l(X) = X^{-1}\left( (l-1)(1-l^{\kappa_1 + \kappa_2 + 2}\varepsilon_{\invs{F}}(l)\varepsilon_{\invs{G}}(l) X^2 ) - l \invs{P}_l(X) \right) 
\]
so in particular $\invs{Q}_l(X) \equiv -X^{-1}\invs{P}_l(X)$ modulo $l-1$. Such a congruence allows us to adjust the classes $\beilfd{m}{\invs{G}}$ so that we obtain Euler system relations.

\begin{proposition} \label{ExistenceOfESClasses}
Let $c \geq 1$ be an integer prime to $6pN$ and let $A$ denote the set of all square-free positive integers which are coprime to $6pNc$. Then for all $m \in A$, there exist cohomology classes ${_c\invs{Z}_m^{[\invs{F}, \invs{G}]}} \in \opn{H}^1(\mbb{Q}(\mu_m), D^{\mathrm{la}}(\Gamma, M) )$ such that 
\begin{enumerate}
\item The bottom class satisfies $_c\invs{Z}_1^{[\invs{F}, \invs{G}]} = {\beilfd{1}{\invs{G}}}$.
\item If $l$ is a prime such that $lm \in A$ (so in particular $l \nmid m$), we have the following Euler system relation
\[
\opn{cores}^{\mbb{Q}(\mu_{lm})}_{\mbb{Q}(\mu_m)}{_c\invs{Z}_{lm}^{[\invs{F}, \invs{G}]}} = \invs{P}_l(l^{-\mathbf{j}} \sigma_l^{-1} ) \cdot {_c\invs{Z}_{m}^{[\invs{F}, \invs{G}]}}
\]
Note that $\invs{P}_l(l^{-\mathbf{j}}X) = \opn{det}\left( 1 - \opn{Frob}_l^{-1} X | M^*(1 + \mathbf{j}) \right)$.
\end{enumerate}
Moreover, ${_c\invs{Z}_m^{[\invs{F}, \invs{G}]}}$ differs from ${\beilfd{m}{\invs{G}}}$ by an element of $\ordd(V_1 \times V_2 \times \invs{W})^{\circ}[(\mbb{Z}/m\mbb{Z})^{\times}]$. 
\end{proposition}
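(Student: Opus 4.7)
The proof is a formal application of a Rubin-style ``Euler system adjustment'' procedure, directly analogous to the passage from Rankin--Iwasawa classes to Beilinson--Flach classes in \cite[\S 5.4]{KLZ2015}, now extended to the Coleman family setting.

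The key input, already observed in the excerpt, is the congruence
\[
\invs{Q}_l(X) \equiv -X^{-1}\invs{P}_l(X) \pmod{(l-1)\,\ordd(V_1\times V_2)[X,X^{-1}]},
\]
which follows at once from the explicit formula for $\invs{Q}_l$ using $l \equiv 1 \pmod{l-1}$. Substituting $X = l^{-\mathbf{j}}\sigma_l^{-1}$ yields the same congruence in the Iwasawa algebra $\ordd(V_1\times V_2\times\invs{W})^\circ[\Gal(\mbb{Q}(\mu_m)/\mbb{Q})]$, so that the relation of Proposition \ref{normthm} may be rewritten in the form
\[
\opn{cores}^{\mbb{Q}(\mu_{lm})}_{\mbb{Q}(\mu_m)}\!\left(\beilfd{lm}{\invs{G}}\right) \;=\; -l^{\mathbf{j}}\sigma_l\cdot \invs{P}_l(l^{-\mathbf{j}}\sigma_l^{-1}) \cdot \beilfd{m}{\invs{G}} \;+\; (l-1)\cdot (\text{integral error}).
\]

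I would then construct the classes $_c\invs{Z}_m^{[\invs{F},\invs{G}]} = \lambda_m \cdot \beilfd{m}{\invs{G}}$ by induction on the number of prime divisors of $m$, where $\lambda_m \in \ordd(V_1\times V_2\times\invs{W})^\circ[(\mbb{Z}/m\mbb{Z})^\times]$ is built multiplicatively as $\lambda_m = \prod_{l \mid m} \lambda_l$ from local correction factors. The base case is $\lambda_1 = 1$, which gives part (1) immediately. Each local factor $\lambda_l$ is designed to absorb the prefactor $-l^{\mathbf{j}}\sigma_l$ and the $(l-1)$-divisible error term above, and its existence is a formal consequence of Rubin's lemma in the group ring (compare \cite[Prop.~5.4.7]{lz-coleman}). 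The Euler system relation in part (2) is then verified by applying $\opn{cores}^{\mbb{Q}(\mu_{lm})}_{\mbb{Q}(\mu_m)}$ to $\lambda_{lm}\cdot\beilfd{lm}{\invs{G}}$ and using the Galois equivariance of corestriction together with the congruence above.

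The main obstacle is integrality: one must ensure that the modifications $\lambda_m$ lie in $\ordd(\,\cdot\,)^\circ[(\mbb{Z}/m\mbb{Z})^\times]$ rather than in the rationalization. This follows because the witness to the congruence, namely $(\invs{Q}_l + X^{-1}\invs{P}_l)/(l-1)$, has coefficients in $\ordd(V_1\times V_2)$ (as the Hecke eigenvalues $a_l(\invs{F}), a_l(\invs{G})$ and the nebentypus values are integral over the base), so the local modifications are manifestly integral, and consistency across different $m$ follows from the commutativity of the actions of distinct primes. The final clause---that $_c\invs{Z}_m^{[\invs{F},\invs{G}]}$ differs from $\beilfd{m}{\invs{G}}$ by an element of $\ordd(V_1\times V_2\times\invs{W})^\circ[(\mbb{Z}/m\mbb{Z})^\times]$---is then simply the statement that $\lambda_m$ lies in this ring.
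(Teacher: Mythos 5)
Your overall strategy is the one the paper intends: its proof consists of citing the adjustment argument of \cite[\S 7.3]{BeilinsonFlach}, which rests exactly on the congruence $\invs{Q}_l(X) \equiv -X^{-1}\invs{P}_l(X) \pmod{l-1}$ and a modification of the classes $\beilfd{m}{\invs{G}}$ by integral group-ring data. However, your execution has a genuine gap: the modification cannot be purely multiplicative. If ${_c\invs{Z}_m^{[\invs{F},\invs{G}]}} = \lambda_m\cdot\beilfd{m}{\invs{G}}$ with $\lambda_m \in \ordd(V_1\times V_2\times\invs{W})^{\circ}[(\mbb{Z}/m\mbb{Z})^{\times}]$, then Galois equivariance of corestriction and Proposition \ref{normthm} give $\opn{cores}^{\mbb{Q}(\mu_{lm})}_{\mbb{Q}(\mu_m)}\bigl(\lambda_{lm}\cdot\beilfd{lm}{\invs{G}}\bigr) = \bar{\lambda}_{lm}\,\invs{Q}_l(l^{-\mathbf{j}}\sigma_l^{-1})\cdot\beilfd{m}{\invs{G}}$, where $\bar{\lambda}_{lm}$ is the image of $\lambda_{lm}$ in the group ring of $(\mbb{Z}/m\mbb{Z})^{\times}$. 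Since no injectivity statement is available to cancel $\beilfd{m}{\invs{G}}$, the only way to conclude part (2) along these lines is the group-ring identity $\bar{\lambda}_{lm}\invs{Q}_l = \invs{P}_l\lambda_m$; already for $m=1$, where $\lambda_1 = 1$ is forced by part (1) and $\sigma_l\mapsto 1$, this would require $\invs{Q}_l(l^{-\mathbf{j}})$ to divide $\invs{P}_l(l^{-\mathbf{j}})$ in $\ordd(V_1\times V_2\times\invs{W})$, which fails in general (e.g.\ at any point where $\invs{Q}_l(l^{-\mathbf{j}})$ vanishes but $\invs{P}_l(l^{-\mathbf{j}})$ does not). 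The unit prefactor $-l^{\mathbf{j}}\sigma_l$ can indeed be absorbed into $\lambda$; the $(l-1)$-divisible error cannot, because $l-1$ is not a unit.

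The missing idea is how that error is actually used: since $[\mbb{Q}(\mu_{lm}):\mbb{Q}(\mu_m)] = l-1$, one has $(l-1)\cdot\beilfd{m}{\invs{G}} = \opn{cores}^{\mbb{Q}(\mu_{lm})}_{\mbb{Q}(\mu_m)}\opn{res}^{\mbb{Q}(\mu_{lm})}_{\mbb{Q}(\mu_m)}\beilfd{m}{\invs{G}}$, so the error term in the norm relation is itself a corestriction from level $lm$, and one corrects the level-$lm$ class \emph{additively} by restrictions of lower-level classes. For instance ${_c\invs{Z}_l^{[\invs{F},\invs{G}]}} = -l^{-1}l^{-\mathbf{j}}\cdot\beilfd{l}{\invs{G}} + l^{-1}\bigl(1 - l^{\kappa_1+\kappa_2+2}\varepsilon_{\invs{F}}(l)\varepsilon_{\invs{G}}(l)l^{-2\mathbf{j}}\bigr)\cdot\opn{res}^{\mbb{Q}(\mu_l)}_{\mbb{Q}}\beilfd{1}{\invs{G}}$ satisfies the relation over $m=1$, and in general one constructs ${_c\invs{Z}_m^{[\invs{F},\invs{G}]}}$ by induction on the number of prime factors of $m$ as an integral group-ring combination of $\beilfd{m}{\invs{G}}$ and of restrictions of $\beilfd{d}{\invs{G}}$ for $d\mid m$; this is the content of the argument in \cite[\S 7.3]{BeilinsonFlach} that the paper invokes, and it is also the reading of the final clause that the subsequent corollary actually uses. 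Note finally that the ``main obstacle'' you identify is misplaced: integrality of the correction coefficients is immediate, since $l^{\pm 1}$, $l^{\pm\mathbf{j}}$, $\varepsilon_{\invs{F}}(l)$, $\varepsilon_{\invs{G}}(l)$, $a_l(\invs{F})$, $a_l(\invs{G})$ are all power-bounded; the real work is the cores--res bookkeeping and verifying in the induction that the relations for all primes dividing $m$ hold simultaneously.
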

\begin{proof}
This follows from the same argument in \cite[\S 7.3]{BeilinsonFlach}.
\end{proof}

Unfortunately, in general, there is no way to force these classes to lie in a Galois stable lattice inside $D^{\mathrm{la}}(\Gamma, M)$, so we do not get an Euler system for this representation. However, this is possible after specialisation so long as we use a weaker notion of an Euler system.

\begin{corollary}
Let $\mathbf{x} = (k_1, k_2, \eta) \in V_1 \times V_2 \times \invs{W}$ defined over a finite extension $E/\mbb{Q}_p$, and let $T(\eta^{-1})$ be a Galois stable lattice inside $M_E(\invs{F}_{k_1})^* \otimes M_E(\invs{G}_{k_2})^* (\eta^{-1})$. Assume that $k_1 \neq k_2$. Let $c \geq 1$ be an integer prime to $6Np$ and let $\invs{N}$ be a finite product of primes containing all primes dividing $6cNp$. Let $S$ denote the set of positive integers divisible only by primes not dividing $\invs{N}$. Then for $m \in S$ and $V_1$ and $V_2$ small enough, there exist cohomology classes 
\[
c_m \in \opn{H}^1\left(\mbb{Q}(\mu_m), T(\eta^{-1}) \right) 
\]
which satisfy
\[
\opn{cores}^{\mbb{Q}(\mu_{lm})}_{\mbb{Q}(\mu_m)} {c_{lm}} = \left\{ \begin{array}{cc} c_m & \text{ if } l | m \\ P_l(\eta^{-1}(l)\sigma_l^{-1}) \cdot c_m & \text{ if } l \nmid m \end{array} \right.
\]
where $l$ is a prime not dividing $\invs{N}$ and $P_l(\eta^{-1}(l)X)$ is the specialisation of $\invs{P}_l(l^{-\mathbf{j}}X)$ at $(k_1, k_2, \eta)$. Furthermore, the bottom class $c_1$ is a non-zero multiple of ${\beilfd{1}{\invs{G}}}$. 
\end{corollary}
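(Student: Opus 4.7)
The plan is to specialize the universal classes ${_c\invs{Z}_m^{[\invs{F},\invs{G}]}}$ of Proposition \ref{ExistenceOfESClasses} at the point $\mathbf{x} = (k_1, k_2, \eta)$, extend the indexing from square-free $m \in A$ to all $m \in S$ using the unmodified Beilinson--Flach classes of Proposition \ref{normthm}, and then clear denominators so that the resulting classes land in the prescribed lattice $T(\eta^{-1})$.

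\textbf{Specialisation and extension.} The point $\mathbf{x}$ defines a continuous ring homomorphism $\ordd(V_1 \times V_2) \hatot D^{\mathrm{la}}(\Gamma, E) \to E$ which sends the universal cyclotomic character $\mathbf{j}$ to $\eta$, and under which $D^{\mathrm{la}}(\Gamma, M)$ specialises to $M_E(\invs{F}_{k_1})^* \otimes_E M_E(\invs{G}_{k_2})^*(\eta^{-1})$. For square-free $m \in A$, let $\tilde c_m$ denote the image of ${_c\invs{Z}_m^{[\invs{F}, \invs{G}]}}$ under this map. Since $\invs{P}_l(l^{-\mathbf{j}} X)$ specialises to $P_l(\eta^{-1}(l) X)$, the tame norm relation of Proposition \ref{ExistenceOfESClasses}(2) specialises exactly to $\opn{cores} \tilde c_{lm} = P_l(\eta^{-1}(l)\sigma_l^{-1}) \tilde c_m$ for $l \nmid m$. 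To extend to non-square-free $m \in S$, I would use the classes $\beilfd{m}{\invs{G}}$ (defined for all $m$), which by Proposition \ref{normthm} satisfy $\opn{cores} \beilfd{lm}{\invs{G}} = a_l(\invs{F}) a_l(\invs{G}) \cdot \beilfd{m}{\invs{G}}$ for $l \mid m$. After specialisation at $\mathbf{x}$, renormalising by $\prod_{l \mid m} (a_l(\invs{F}_{k_1}) a_l(\invs{G}_{k_2}))^{1 - v_l(m)}$ and applying the same group-ring-element adjustment from Proposition \ref{ExistenceOfESClasses} at the square-free radical of $m$ produces compatible classes $\tilde c_m$ for every $m \in S$ satisfying $\opn{cores} \tilde c_{lm} = \tilde c_m$ for $l \mid m$ while preserving the tame Euler system relation for $l \nmid m$. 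This mirrors the construction in \cite[\S 7.3]{BeilinsonFlach}.

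\textbf{Integrality.} The classes $\tilde c_m$ lie a priori in $\opn{H}^1(\mbb{Q}(\mu_m), M_E(\invs{F}_{k_1})^* \otimes M_E(\invs{G}_{k_2})^*(\eta^{-1}))$. By the uniform bound on the $\lambda$-order of the distribution $\beilfd{m}{\invs{G}}$ (see \cite[Proposition 5.4.1]{lz-coleman}) combined with the integrality of the tame Hecke eigenvalues $a_l(\invs{F}_{k_1}), a_l(\invs{G}_{k_2})$ via the Ramanujan--Petersson bound, one can choose a single nonzero $D \in E$ such that $c_m := D \cdot \tilde c_m$ lies in $\opn{H}^1(\mbb{Q}(\mu_m), T(\eta^{-1}))$ for every $m \in S$. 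Since no adjustment is required at $m = 1$, one has $c_1 = D \cdot \beilfd{1}{\invs{G}}|_{\mathbf{x}}$, which is a non-zero multiple of $\beilfd{1}{\invs{G}}|_{\mathbf{x}}$.

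\textbf{The main obstacle} is the uniformity of the denominator $D$ across all $m \in S$. Both the renormalisation factors $(a_l(\invs{F}_{k_1}) a_l(\invs{G}_{k_2}))^{1 - v_l(m)}$ and the specialisation of locally analytic distributions at $\mathbf{x}$ could in principle introduce denominators growing with $m$. The hypothesis that $V_1, V_2$ are sufficiently small is used precisely to control the variation of $a_l(\invs{F})$ and $a_l(\invs{G})$ on these affinoids, ensuring that the renormalisation remains bounded uniformly in $l$. The hypothesis $k_1 \neq k_2$ enters via the absence of nontrivial Galois invariants in the Rankin--Selberg representation $M_E(\invs{F}_{k_1})^* \otimes M_E(\invs{G}_{k_2})^*(\eta^{-1})$, ruling out torsion obstructions when passing to the lattice $T(\eta^{-1})$.
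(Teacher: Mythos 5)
Your first step (specialising ${_c\invs{Z}_m^{[\invs{F},\invs{G}]}}$ at $\mathbf{x}$ for square-free $m$ and then clearing denominators) is the same as the paper's, but the two places where the real work happens are not carried out, and one of them would actually fail. On integrality: the growth bound on the order of the distribution ${\beilfd{m}{\invs{G}}}$ together with ``integrality of the tame Hecke eigenvalues'' does not produce a single constant $D$ valid for \emph{all} $m \in S$ --- at a non-classical weight $k_1$ there is no Ramanujan--Petersson bound to invoke, and in any case the issue is $p$-adic valuation, not algebraic integrality, and nothing you say controls the denominators uniformly in $m$. The paper gets the uniform constant from \cite[Prop.~2.4.7]{lz-coleman}, whose hypothesis is the vanishing of $\opn{H}^0(\mbb{Q}(\mu_{mp^{\infty}}), M_{\mathbf{x}})$ for every $m \in S$; this is exactly where $k_1 \neq k_2$ is used (shrinking $V_1, V_2$ makes $M_{\mathbf{x}}$ absolutely irreducible, and a nonzero invariant under the abelian extension would force a one-dimensional character-stable submodule, contradicting irreducibility). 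Your remark that $k_1 \neq k_2$ ``rules out torsion obstructions'' gestures at this, but you never state the $\opn{H}^0$-vanishing nor the result converting it into a bounded denominator independent of $m$, so the ``main obstacle'' you flag is left open in your argument.

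The extension to non-square-free $m$ is the step that breaks. You renormalise by $\prod_{l \mid m}\bigl(a_l(\invs{F}_{k_1})a_l(\invs{G}_{k_2})\bigr)^{1-v_l(m)}$; these tame eigenvalues lie in $\ordd_E$ but need not be $p$-adic units, so the negative powers introduce $p$-denominators growing without bound as $v_l(m)$ grows, and shrinking $V_1, V_2$ cannot turn infinitely many $a_l$ into units ``uniformly in $l$'' as you claim. No single $D$ can then restore integrality over all of $S$. The paper avoids dividing by Hecke eigenvalues altogether: having built $c_{m'}$ for square-free $m'$, it sets, for $m$ with radical $m'$,
\[
c_m \defeq \prod_{l \mid m} l^{-(v_l(m)-1)} \, \opn{res}^{\mbb{Q}(\mu_m)}_{\mbb{Q}(\mu_{m'})} c_{m'},
\]
which gives $\opn{cores}^{\mbb{Q}(\mu_{lm})}_{\mbb{Q}(\mu_m)} c_{lm} = c_m$ for $l \mid m$ because $\opn{cores}\circ\opn{res}$ is multiplication by the degree $l$, and integrality is preserved because every prime $l$ dividing an element of $S$ is prime to $p$, hence a unit in $\ordd_E$. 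Replacing your eigenvalue renormalisation by this restriction-from-the-radical device, and your integrality paragraph by the $\opn{H}^0$-vanishing plus \cite[Prop.~2.4.7]{lz-coleman}, would repair the proof.
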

\begin{proof}
Firstly, note that $\opn{H}^0(\mbb{Q}(\mu_{mp^{\infty}}), M_{\mathbf{x}}) = 0$ for all $m \in S$, where 
\[
M_{\mathbf{x}} = M_E(\invs{F}_{k_1})^* \otimes M_E(\invs{G}_{k_2})^* (\eta^{-1}).
\]
Indeed this is true because we have assumed $k_1 \neq k_2$, for the following reason. Shrinking $V_1$ and $V_2$ if necessary, we can assume that $M_{\mathbf{x}}$ is (absolutely) irreducible. Hence any twist of $M_{\mathbf{x}}$ by a character is also irreducible. But if $M_{\mathbf{x}}$ has any non-trivial invariants under the group $G_{\mbb{Q}^{ab}}$ then there is a one-dimensional submodule of $M_{\mathbf{x}}$ on which $G_{\mbb{Q}}$ acts via a character. This is a contradiction to irreducibility. 

Therefore, by applying Proposition 2.4.7 in \cite{lz-coleman}, there exists a constant $R > 0$ independent of $m$ such that $R \cdot {\beilfd{m}{\invs{G}}}$ specialised at $\mathbf{x}$ lands in the cohomology of the Galois stable lattice $T(\eta^{-1})$.   

But since ${_c\invs{Z}_m^{[\invs{F}, \invs{G}]}}$ differs from ${\beilfd{m}{\invs{G}}}$ by an element of $\ordd(V_1 \times V_2 \times \invs{W})^{\circ}[(\mbb{Z}/m\mbb{Z})^{\times}]$, this implies that the specialisation of $R \cdot {_c\invs{Z}_m^{[\invs{F}, \invs{G}]}}$ lands in the cohomology of the Galois stable lattice $T(\eta^{-1})$. We set $c_m$ to be the specialisation of $R \cdot {_c\invs{Z}_m^{[\invs{F}, \invs{G}]}}$ at $\mathbf{x}$.

By Proposition \ref{ExistenceOfESClasses}, we obtain the Euler system relations for the classes $c_m$, and this proves the Corollary for all $m \in A$, where $A$ is the subset of $S$ consisting of all square-free integers.

But we can extend the classes $\{c_m : m \in A\}$ to a collection of classes indexed over the set $S$ by defining $c_m$ to be 
\[
c_m := \prod_{l | m}\frac{1}{l^{v_l(m) - 1}} \opn{res}^{\mbb{Q}(\mu_m)}_{\mbb{Q}(\mu_{m'})} c_{m'}
\]
where $m'$ is the radical of $m$ (i.e. the product of all prime factors that divide $m$) and $v_l(m)$ is the $l$-adic valuation of $m$. We don't lose integrality of the classes because all integers in $S$ are coprime to $p$.
\end{proof}


\section{Preliminaries on $(\varphi, \Gamma)$-modules}

\subsection{Period rings}

In this section we (briefly) recall the period rings that will be used throughout the paper. Since we only work with representations of $G_{\mbb{Q}_p}$, we specialise immediately to this case and refer the reader to \cite{berger3} for the definitions over more general $p$-adic fields; proofs and their corresponding references for all of the assertions in this section can also be found in \emph{op. cit.}.

Let $\mbb{C}_p$ denote a fixed completed algebraic closure of $\mbb{Q}_p$ and let $v_p$ denote the unique valuation on $\mbb{C}_p$ such that $v_p(p) = 1$. Set $\ordd_{\mbb{C}_p}^{\flat} \defeq \varprojlim \ordd_{\mbb{C}_p}/p$, where the inverse limit is over the $p$-th power map, and fix $\varepsilon \defeq (\varepsilon_n) \in \ordd_{\mbb{C}_p}^{\flat}$ a compatible system of $p$-th roots of unity, i.e. $\varepsilon_n$ is a $p^n$-th root of unity such that $\varepsilon_{n+1}^p = \varepsilon_n$.

Let $\widetilde{\mathbf{A}}^+$ denote the ($p$-typical) Witt vectors of $\ordd_{\mbb{C}_p}^{\flat}$ and let $\widetilde{\mathbf{B}}^+ = \widetilde{\mathbf{A}}^+[\frac{1}{p}]$. One has a canonical homomorphism
\[
\theta\colon \widetilde{\mathbf{B}}^+ \rightarrow \mbb{C}_p
\] 
whose kernel is a principal ideal of $\widetilde{\mathbf{B}}^+$ generated by $\omega \defeq ([\varepsilon] - 1)/([\varepsilon'] - 1)$ where $\varepsilon' = (\varepsilon_{n+1})_{n \geq 0}$ and $[\cdot]$ denotes the Teichm\"{u}ller lift. We set $t \defeq \opn{log}([\varepsilon])$ to be the period of Fontaine. 

We define the de Rham period rings $\bdr^+$ and $\bdr$ to be the $(\opn{ker}\theta)$-adic completion of $\widetilde{\mathbf{B}}^+$ and the fraction field of $\bdr^+$ respectively. Similarly we define the crystalline period rings $\bcris^+$ and $\bcris$ to be the divided-power envelope of $\widetilde{\mathbf{B}}^+$ (with respect to $\omega$) and $\bcris = \bcris^+[t^{-1}]$ respectively. The field $\bdr$ comes equipped with a decreasing filtration given by $\opn{Fil}^i\bdr = t^i\bdr^+$ and since $\bcris \subset \bdr$, the ring $\bcris$ also inherits a filtration from $\bdr$. Furthermore, $\bcris$ comes equipped with a Frobenius endomorphism $\varphi$ extending the natural Frobenius on $\widetilde{\mathbf{B}}^+$. 

In addition to these constructions, we define overconvergent period rings as follows. For $r, s$ two positive rational numbers, let $\widetilde{\mathbf{A}}^{[r, s]}$ denote the $p$-adic completion of $\widetilde{\mathbf{A}}^+\left[\frac{p}{[\varepsilon - 1]^r}, \frac{[\varepsilon - 1]^s}{p}\right]$ and set $\widetilde{\mathbf{B}}^{[r, s]} = \widetilde{\mathbf{A}}^{[r, s]}[1/p]$. We define $\widetilde{\mathbf{B}}^{\dagger, r}_{\text{rig}}$ to be the intersection $\widetilde{\mathbf{B}}^{\dagger, r}_{\text{rig}} \defeq \cap_{r \leq s < +\infty}\widetilde{\mathbf{B}}^{[r, s]}$ and if we set $\pi = [\varepsilon] -1$, then this ring contains the following subring 
\[
\mathbf{B}^{\dagger, r}_{\text{rig}, \mbb{Q}_p} \defeq \left\{ f(\pi) = \sum_{k = -\infty}^{\infty}{a_k \pi^k} : \begin{array}{c} a_k \in \mbb{Q}_p \\ f(x) \text{ converges for } 0 < v_p(x) \leq \frac{1}{r} \end{array}  \right\}.
\] 
Note that the period $t$ defined previously is equal to $\opn{log}(1+\pi)$. 

The union of these rings, namely $\bdrig \defeq \varinjlim{\mathbf{B}^{\dagger, r}_{\mathrm{rig}, \mbb{Q}_p}}$, is the \emph{Robba ring} associated to the $p$-adic field $\mbb{Q}_p$ and can be identified with all power series in $\mbb{Q}_p\db{\pi, \pi^{-1}}$ that converge on an annulus of the form $\{x \in \mbb{C}_p : 0 < v_p(x) \leq 1/r \}$ for some positive rational number $r$. We let $\bprig \subset \bdrig$ denote the subring of power series which converge on the whole open unit disc, i.e. $f(x)$ converges for all $v_p(x) > 0$. Both of these rings come equipped with an action of Frobenius given by the formula
\[
(\varphi \cdot f) (\pi) = f((1+\pi)^p - 1)
\] 
and $\Gamma \defeq \Gal(\mbb{Q}_p(\mu_{p^{\infty}})/\mbb{Q}_p)$ given by the formula 
\[
(\gamma \cdot f) (\pi) = f((1+\pi)^{\chi_{\mathrm{cycl}}(\gamma)} - 1)
\]
where $\gamma \in \Gamma$ and $\chi_{\mathrm{cycl}}\colon \Gamma \to \mbb{Z}_p^{\times}$ is the cyclotomic character. The morphism $\varphi$ has a left inverse, denoted by $\psi$, which satisfies the following relation
\[
(\varphi \psi \cdot f)(\pi) = \frac{1}{p} \sum_{\xi^p = 1}{f((1+\pi)\xi - 1)}
\]
where the sum is over all $p$-th roots of unity.

If $A$ is an affinoid $\mbb{Q}_p$-algebra, we define the Robba ring over $A$ to be the completed tensor product $\bdrigA{A}$, and similarly we denote the subring of bounded power series by $\bprig \hatot A$. As above, both of these rings come equipped with an action of $\varphi$, $\psi$ and $\Gamma$ by the exact same formulae. 

\subsection{Overconvergent $(\varphi, \Gamma)$-modules over affinoid algebras} \label{OverconvergentPhiGammaModules}

\begin{definition} 
Let $A$ be an affinoid algebra over $\mbb{Q}_p$. We say that $D$ is a $(\varphi, \Gamma)$-module over $\bdrigA{A}$ if $D$ is a finite projective $(\bdrigA{A})$-module with commuting semilinear actions of $\varphi$ and $\Gamma$, such that $\varphi(D)$ generates $D$ as a $(\bdrigA{A})$-module. 
\end{definition}

If $M$ is a Galois representation over an affinoid algebra $A$ (i.e. a finite projective $A$-module with a continuous $A$-linear action of $G_{\mbb{Q}}$) then Berger and Colmez \cite{Berger-Colmez} (and more generally Kedlaya and Liu \cite{familiesphiGamma}) have constructed a functor 
\[
M \mapsto \ddrig{M}
\]
which associates to a Galois representation a $(\varphi, \Gamma)$-module over the Robba ring $\bdrigA{A}$. This agrees with the usual functor as constructed by Berger \cite{berger2} when $A$ is a finite field extension of $\mbb{Q}_p$. 

For the rest of this section let $A=E$ be a finite field extension of $\mbb{Q}_p$ and let $D$ be a $(\varphi, \Gamma)$-module over $\bdrigE$. By taking the ``stalk at $\zeta_{p^n} - 1$'' one can define a $\mbb{Q}_p(\mu_{p^{\infty}})(\!(t)\!)$-module $D_{\mathrm{dif}}$ with a semilinear action of $\Gamma$ (see \cite{Nakamura2014}). We set 
\[
\derham{D} \defeq D_{\mathrm{dif}}^{\Gamma} \; \; \; \; \; \;  \text{ and } \; \; \; \; \; \;  \Cris{D} \defeq D[t^{-1}]^{\Gamma}
\]
and note that both $\derham{D}$ and $\Cris{D}$ are finite-dimensional vector spaces over $E$.

\begin{definition}
Let $E/\mbb{Q}_p$ be a finite extension and $D$ a $(\varphi, \Gamma)$-module over $\bdrigE$. 
\begin{enumerate}
\item We say that $D$ is de Rham (resp. crystalline) if the $E$-dimension of $\derham{D}$ (resp. $\Cris{D}$) is equal to the rank of $D$ as a $\bdrigE$-module. 
\item If $D$ is de Rham then $\derham{D}$ comes equipped with a decreasing filtration induced from the $t$-adic filtration on $\mbb{Q}_p(\mu_{p^{\infty}})\db{t}$. The Hodge--Tate weights of $D$ are defined to be the negatives of the jumps in the filtration on $\derham{D}$ (so in particular the cyclotomic character has Hodge--Tate weight $1$). 
\end{enumerate}
\end{definition}

If $D$ is a crystalline $(\varphi, \Gamma)$-module then one can associate a sub-$(\bprig \hatot E)$-module of $D$, denoted $\nrig{D}$, that is free of rank equal to the rank of $D$ and is stable under $\Gamma$. Furthermore $\varphi$ restricts to a morphism 
\[
\varphi \colon \nrig{D} \to \nrig{D}[q^{-1}]
\]
where $q = \varphi(\pi)/\pi$; and if $D$ has non-negative Hodge--Tate weights, then $\nrig{D}$ is in fact stable under $\varphi$. This submodule is called the Wach module associated to $D$ and will be important in section \ref{prlog} when we recall the construction of the Perrin-Riou logarithm. For more details on the construction and properties of this module, see \cite[\S 3]{pottc}. Note that if $D$ comes from a crystalline $p$-adic representation $V$ and $\mathbf{N}(V)$ denotes the usual Wach module (over $\mathbf{B}_{\mbb{Q}_p}^+$) associated with $V$, then $\nrig{D}$ satisfies the relation
\[
\nrig{D} = \mathbf{N}(V) \otimes_{\mathbf{B}^+_{\mbb{Q}_p}} \bprig .
\]
Here $\mathbf{B}^+_{\mbb{Q}_p}$ is a period ring which can be identified with the subring $\mbb{Z}_p \db{\pi}[1/p] \subset \bprig$.

\subsection{Cohomology of $(\varphi, \Gamma)$-modules} \label{CohomologyPHIGAMMA}
If $D$ is a \pgp-module over $\bdrigA{A}$, then we define
the Herr complex
\[ C^{\bullet}_{\varphi, \gamma}(D) \defeq D \xrightarrow{\varphi-1,\gamma-1} D \oplus D \xrightarrow{1-\gamma,\varphi-1} D \, \]
concentrated in degrees $0, 1, 2$, where $\gamma$ is a choice of topological generator for $\Gamma$ (such an element exists because we have assumed $p > 2$). We define $\rcont{\mbb{Q}_p}{D}$ to be the corresponding object in the derived category of bounded complexes of (continuous) $A$-modules, and we denote the cohomology of this complex by $\opn{H}^i(\mbb{Q}_p, D) \defeq \opn{H}^i(C^{\bullet}_{\varphi, \gamma}(D))$.

By \cite[Thm.~2.8]{pottharst-selmer}, for any $A$-representation $M$ of $G_{\Qp}$ there is a canonical
quasi-isomorphism
\[ \rcont{\mbb{Q}_p}{M} \cong \rcont{\mbb{Q}_p}{\ddrig{M}} \,. \]

Since $\varphi(D)$ generates $D$ as an $\bdrigA{A}$-module, the map
$\varphi \colon D \to D$ has a unique semilinear left inverse $\psi$.
As the following lemma shows, the cohomology of $D$ can be computed
with $\psi$ in place of $\varphi$.
\begin{lemma}[{\cite[Proposition 2.3.6]{kpx}}] \label{CohomologyPsi}
Let $C^{\bullet}_{\psi,\gamma}(D)$ be the bottom row of the commutative
diagram
\[
\begin{tikzcd}
D \arrow[r,"{\varphi-1,\gamma-1}"] \arrow[d,"\operatorname{id}"] & 
D \oplus D \arrow[r,"{1-\gamma,\varphi-1}"]  \arrow[d,"-\psi \oplus \operatorname{id}"] &
D \arrow[d,"-\psi"] \\
D \arrow[r,"{\psi-1,\gamma-1}"] & D \oplus D \arrow[r,"{1-\gamma,\psi-1}"] & D
\end{tikzcd}
\]
(here the top row is $C^{\bullet}_{\varphi, \gamma}(D)$).
Then the map $C^{\bullet}_{\varphi, \gamma}(D) \to C^{\bullet}_{\psi,\gamma}(D)$
defined above is a quasi-isomorphism.
\end{lemma}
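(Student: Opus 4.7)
The plan is to show that the displayed chain map induces an isomorphism on cohomology, which is equivalent to its mapping cone being acyclic. The key structural input I would rely on is that $\psi$ is a left inverse of $\varphi$, so $\varphi\psi$ is an idempotent on $D$ (since $(\varphi\psi)^2 = \varphi \cdot (\psi\varphi) \cdot \psi = \varphi\psi$), giving a direct sum decomposition $D = \varphi(D) \oplus D^{\psi=0}$ of $A$-modules. Because $\gamma$ commutes with both $\varphi$ and $\psi$ in a $(\varphi, \Gamma)$-module, this decomposition is $\Gamma$-equivariant.

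First, I would analyse the two-term complexes $[D \xrightarrow{\varphi - 1} D]$ and $[D \xrightarrow{\psi - 1} D]$ separately. On the summand $D^{\psi = 0}$, the operator $\psi - 1$ restricts to $-\operatorname{id}$, which is a bijection, so this portion of the $\psi$-complex is acyclic. On the complementary summand $\varphi(D)$, the isomorphism $\varphi \colon D \xrightarrow{\sim} \varphi(D)$ conjugates $\psi - 1$ into $\operatorname{id} - \varphi = -(\varphi - 1)$ on $D$. Hence, up to sign, the two-term $\psi$-complex is the direct sum of an acyclic piece and the two-term $\varphi$-complex, and the vertical arrow $-\psi$ in the statement is precisely the projection onto the $\varphi(D)$-summand composed with $\varphi^{-1}$.

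Next I would extend the analysis to the three-term Herr complex. Viewing $C^{\bullet}_{\varphi, \gamma}(D)$ and $C^{\bullet}_{\psi, \gamma}(D)$ as total complexes of double complexes whose second direction is $\gamma - 1$, the $\Gamma$-stability of the decomposition $D = \varphi(D) \oplus D^{\psi=0}$ allows the same argument to propagate through the $\gamma$-differential. Equivalently, one may produce the quasi-isomorphism by exhibiting an explicit contracting homotopy on the mapping cone: the invertibility of $\psi - 1 = -\operatorname{id}$ on $D^{\psi=0}$ supplies the inverses needed to solve for cycles in each degree, and the sign choices $-\psi \oplus \operatorname{id}$ in degree $1$ and $-\psi$ in degree $2$ are exactly those that turn the displayed diagram into a chain map (as one checks by a short commutativity computation), ensuring that the homotopy squares close.

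The main technical obstacle is the bookkeeping: assembling the local contracting homotopies (one from each factor of the decomposition) into a single global homotopy on the cone once all three differentials $\varphi - 1$, $\psi - 1$, and $\gamma - 1$ are simultaneously in play. Beyond that, one must confirm that the decomposition $D = \varphi(D) \oplus D^{\psi=0}$ holds $A$-linearly over an arbitrary affinoid base; this is automatic because it arises from the $A$-linear idempotent $\varphi\psi$, which exists as soon as $\varphi(D)$ generates $D$ (the defining property of a $(\varphi, \Gamma)$-module).
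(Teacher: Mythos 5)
There is a genuine gap, and it sits exactly where the real content of the lemma lies. Your chain map is surjective in every degree (because $\psi$ is surjective), so the quasi-isomorphism statement is equivalent to acyclicity of its kernel complex. Computing that kernel: in degree $0$ it is zero, in degree $1$ it is $\{(x,0): \psi(x)=0\}\cong D^{\psi=0}$, in degree $2$ it is $D^{\psi=0}$, and the Herr differential sends $(x,0)$ to $(1-\gamma)x$. Thus the lemma is \emph{equivalent} to the bijectivity of $\gamma-1$ on $D^{\psi=0}$. This is a nontrivial analytic theorem about $(\varphi,\Gamma)$-modules over the Robba ring (due to Colmez--Berger for field coefficients, and it is the key input in the proof of \cite[Prop.\ 2.3.6]{kpx} in the affinoid family setting, where it requires explicit convergence estimates); it cannot be deduced formally from $\psi\varphi=\operatorname{id}$. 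Your proposal never invokes it: the inverses you plan to feed into the contracting homotopy come from $\psi-1=-\operatorname{id}$ on $D^{\psi=0}$, which is the wrong operator --- what must be inverted on $D^{\psi=0}$ is $\gamma-1$, and the idempotent decomposition $D=\varphi(D)\oplus D^{\psi=0}$ gives no handle on that.

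In addition, the intermediate claim that $[D\xrightarrow{\psi-1}D]$ splits, up to sign, as an acyclic piece plus $[D\xrightarrow{\varphi-1}D]$ is false: $\psi-1$ does not preserve $\varphi(D)$, since for $x=\varphi(y)$ one has $(\psi-1)x=y-\varphi(y)$, which in general lies outside $\varphi(D)$, so the complex does not decompose along your direct sum. Indeed the two two-term complexes are not quasi-isomorphic in general: $D^{\psi=1}$ is typically a huge (Iwasawa-theoretic) module, whereas $D^{\varphi=1}$ is finite-dimensional, so already their $H^0$'s differ. The discrepancy between the $\varphi$- and $\psi$-complexes is exactly governed by $D^{\psi=0}$, and it disappears only after the $\gamma$-direction is incorporated, precisely because $\gamma-1$ acts invertibly there. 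Note finally that the paper itself does not reprove this statement; it quotes \cite[Prop.\ 2.3.6]{kpx}, whose proof is the kernel computation above together with that invertibility result, so any self-contained argument must supply it.
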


We finish this section by stating an Euler--Poincar\'e characteristic formula for $(\varphi, \Gamma)$-modules, which will be used in the proof of Theorem \ref{TheoremVanishing}.
\begin{proposition}[{\cite[Theorem 5.3]{Liuduality}}]  \label{EulerCharFormula}
Let $A=E$ be a finite field extension of $\mbb{Q}_p$ and $D$ a $(\varphi, \Gamma)$-module over $\bdrigE$. Then, for $i=0, 1, 2$, $\opn{H}^i(\mbb{Q}_p, D)$ are finite-dimensional vector spaces over $E$ and 
\[
\chi(D) \defeq \sum_{i=0}^{2}(-1)^i\opn{dim}_E\opn{H}^i(\mbb{Q}_p, D) = -\opn{rank}D
\]
where $\opn{rank}D$ is the rank of $D$ as a $(\bdrigE)$-module. 
\end{proposition}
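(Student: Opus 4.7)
The plan is to proceed in three stages: first establish finiteness of each $\opn{H}^i(\mbb{Q}_p, D)$, then reduce to the rank one case by a dévissage, and finally verify the formula directly for rank one modules.

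For finiteness, I would analyse the Herr complex $C^{\bullet}_{\varphi,\gamma}(D)$ from \S\ref{CohomologyPHIGAMMA} directly. Combining Kedlaya's classification of $\varphi$-modules over the Robba ring (to control $\varphi - 1$) with Sen-theoretic arguments applied to $D_{\mathrm{dif}}$ and the locally analytic $\Gamma$-decomposition of $D$ (to control $\gamma - 1$) yields that each $\opn{H}^i(\mbb{Q}_p, D)$ is a finite-dimensional $E$-vector space, so that $\chi(D)$ is well-defined and, via the long exact cohomology sequence, additive in short exact sequences of $(\varphi, \Gamma)$-modules.

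For the dévissage, I would apply Kedlaya's slope filtration theorem: after possibly enlarging $E$, $D$ admits a $(\varphi, \Gamma)$-stable filtration whose graded pieces are isoclinic of constant slope. Each isoclinic piece becomes étale after tensoring by a suitable rank one $(\varphi, \Gamma)$-module, and étale modules over $\bdrigE$ correspond by \cite{Berger-Colmez} to $p$-adic Galois representations, for which Tate's Euler--Poincaré formula gives $\chi(V) = -\dim_E V$. Additivity then reduces the general case to the assertion that $\chi$ is preserved under tensoring with rank one modules, which in turn reduces to the rank one base case: any rank one $(\varphi, \Gamma)$-module over $\bdrigE$ is parametrised by a continuous character $\delta \colon \mbb{Q}_p^{\times} \to E^{\times}$, and a direct computation of the Herr complex (originally carried out by Colmez) gives $\chi = -1$. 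Generically $\opn{H}^0$ and $\opn{H}^2$ vanish and $\opn{H}^1$ is one-dimensional, while at the exceptional characters $\delta = x^{-i}$ or $\delta = |x| x^i$ for $i \geq 0$ one picks up matching extra contributions in $\opn{H}^0$ or $\opn{H}^2$ that are exactly compensated by an additional dimension in $\opn{H}^1$.

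The main obstacle in this route is the bookkeeping around the slope filtration, namely descending it from its natural field of definition back to $\bdrigE$ with compatible $\Gamma$-action and verifying that $\chi$ behaves additively under the rank one twists involved. An alternative, closer in spirit to Liu's original argument in \cite{Liuduality}, is to pass to the $\psi$-version of the Herr complex via Lemma \ref{CohomologyPsi} and extract $-\opn{rank}\, D$ from a direct trace computation of $\psi$ acting on a Wach-type submodule of $D$; this avoids slope filtrations entirely, but requires a careful analysis of the module structure of $D^{\psi = 1}$ over the relevant Iwasawa algebra.
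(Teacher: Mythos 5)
The paper does not actually prove this proposition: it is quoted directly from Liu \cite[Theorem 5.3]{Liuduality}, so the benchmark is the argument in the literature, whose overall architecture your outline does echo (finiteness and additivity of $\chi$ for the Herr complex, d\'{e}vissage via Kedlaya's slope filtration, the \'{e}tale case via comparison with Galois cohomology and Tate's formula, Colmez's computation for rank one modules). Nevertheless, as written your reduction has genuine gaps. The step ``each isoclinic piece becomes \'{e}tale after tensoring by a suitable rank one $(\varphi,\Gamma)$-module'' fails in general: twisting an isoclinic module of slope $s$ by a rank one module of slope $m$ gives slope $s+m$, and rank one $(\varphi,\Gamma)$-modules over $\bdrigE$ are the $R_E(\delta)$ with slope $v_p(\delta(p)) \in \tfrac{1}{e(E/\mbb{Q}_p)}\mbb{Z}$, so an isoclinic graded piece whose slope has larger denominator can never be twisted to slope zero. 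Such pieces exist: for $E=\mbb{Q}_p$ the rank two module with basis $e_1,e_2$, $\varphi(e_1)=e_2$, $\varphi(e_2)=pe_1$ and $\Gamma$ acting trivially on the basis is a $(\varphi,\Gamma)$-module, pure of slope $1/2$. Your phrase ``after possibly enlarging $E$'' is attached to the existence of the filtration (where no enlargement is needed) rather than to a deliberately ramified enlargement that would supply fractional-slope rank one twists, and nothing in the sketch addresses this.

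More seriously, even where the twist exists, your chain of reductions needs the invariance $\chi(D\otimes R_E(\delta))=\chi(D)$ for $D$ of arbitrary rank, and this does \emph{not} ``reduce to the rank one base case'': knowing $\chi(R_E(\delta))=-1$ plus additivity in short exact sequences gives no control of the cohomology of a higher-rank twist. That invariance is essentially the content of the theorem, and it is exactly where Liu has to work with the $\psi$-operator and the structure of $D^{\psi=0}$ as a module over the distribution algebra of $\Gamma$ (Colmez's Mellin transform), rather than with a formal twisting argument. Two further points: your finiteness paragraph asserts rather than argues its inputs --- in particular the identification of the Herr cohomology of $\ddrig{V}$ over the Robba ring with $\opn{H}^i(G_{\mbb{Q}_p},V)$ is itself a nontrivial theorem (quoted in this paper from \cite[Theorem 2.8]{pottharst-selmer}), not a consequence of \cite{Berger-Colmez}; and in your proposed alternative, a ``Wach-type submodule'' exists only when $D$ is crystalline, so that variant cannot apply to a general $(\varphi,\Gamma)$-module --- the $\psi$-theoretic proofs in the literature rely instead on the freeness of $D^{\psi=0}$ over the Robba ring of $\Gamma$.
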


\subsection{Iwasawa cohomology}

Recall that for an affinoid algebra $A$, we denote its unit ball by $A^\circ$. Let $M$ be a Galois representation over $A$ and let $T$ be a Galois stable lattice inside $M$ (i.e. a sub-$A^\circ$-module that is stable under the action of $G_{\mbb{Q}}$ and satisfies $T[1/p] = M$). The classical Iwasawa cohomology of $M$ is defined to be 
\[
\opn{H}^i_{\mathrm{cl.Iw}}(\mbb{Q}_{\infty}, M) \defeq \left( \varprojlim_n \opn{H}^i(\mbb{Q}(\mu_{p^n}), T)\right) \left[\frac{1}{p}\right]
\]
where the inverse limit is over the corestriction maps, and the (analytic) Iwasawa cohomology of $M$ is
\[
\opn{H}^i_{\Iw}(\mbb{Q}_{\infty}, M) \defeq \opn{H}^i(\mbb{Q}, D^{\mathrm{la}}(\Gamma, M) )
\]
where $D^{\mathrm{la}}(\Gamma, M)$ denotes the space of locally analytic distributions on $\Gamma$, valued in $M$. For a finite place $v$ of $\mbb{Q}$, the Iwasawa cohomology groups $\opn{H}^i_{\mathrm{cl.Iw}}(\mbb{Q}_{v, \infty}, M)$ and $\opn{H}^i_{\Iw}(\mbb{Q}_{v, \infty}, M)$ are defined analogously.
These two constructions satisfy the relation
\[
\opn{H}^i_{\Iw}(\mbb{Q}_{\infty}, M) = \opn{H}^i_{\mathrm{cl.Iw}}(\mbb{Q}_{\infty}, M) \hatot_{\mbb{Z}_p\db{\Gamma}} D^{\mathrm{la}}(\Gamma, A).
\]
\begin{remark}
Our notation for Iwasawa cohomology differs from that in \cite{lz-coleman}; in \emph{op. cit.} $\opn{H}_{\Iw}$ denotes \emph{classical} Iwasawa cohomology, whereas in this paper it refers to \emph{analytic} Iwasawa cohomology.
\end{remark}
\begin{definition}
Let $D$ be a $(\varphi, \Gamma)$-module over $\bdrigA{A}$. The Iwasawa Herr complex is defined to be 
\[
C_{\Iw}^{\bullet}(D)\colon D \xrightarrow{\psi - 1} D
\]
concentrated in degrees $1$ and $2$, where $\psi$ is the left inverse to $\varphi$ as discussed in the previous section. We denote the cohomology of this complex by $\opn{H}^i_{\Iw}(\mbb{Q}_p, D)$. 
\end{definition}

We have the following relation between Iwasawa cohomology for $M$ and Iwasawa cohomology for $\ddrig{M}$.

\begin{proposition}
Let $M$ be a Galois representation over an affinoid algebra $A$. Then one has the following isomorphism 
\[
\opn{H}^i_{\Iw}(\mbb{Q}_{p, \infty}, M) \cong \opn{H}^i_{\Iw}(\mbb{Q}_p, \ddrig{M}).
\]
In particular $\opn{H}^i_{\Iw}(\mbb{Q}_{p, \infty}, M)$ vanishes for $i \neq 1, 2$.
\end{proposition}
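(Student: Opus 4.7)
The plan is to apply the Berger--Colmez/Pottharst comparison theorem (recalled above as \cite[Thm.~2.8]{pottharst-selmer}) to the cyclotomic deformation $D^{\mathrm{la}}(\Gamma, M) \cong M \hatot D^{\mathrm{la}}(\Gamma, A)^{\iota}$, viewed as a continuous $G_{\mbb{Q}_p}$-representation valued in the Fr\'echet--Stein algebra $A \hatot D^{\mathrm{la}}(\Gamma, A)$, and then to identify the resulting Herr cohomology with the Iwasawa Herr complex $C^{\bullet}_{\Iw}(\ddrig{M})$.

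First, the Berger--Colmez/Pottharst comparison (extended from affinoid to Fr\'echet--Stein coefficients by writing $D^{\mathrm{la}}(\Gamma, A)$ as a projective limit of Banach algebras) should give a canonical quasi-isomorphism
\[
\rcont{\mbb{Q}_p}{D^{\mathrm{la}}(\Gamma, M)} \;\cong\; \rcont{\mbb{Q}_p}{\ddrig{D^{\mathrm{la}}(\Gamma, M)}} \,.
\]
Next, functoriality of $\ddrig{\cdot}$, combined with the fact that the Galois action on $D^{\mathrm{la}}(\Gamma, A)^{\iota}$ factors through the quotient $G_{\mbb{Q}_p} \twoheadrightarrow \Gamma$ (and in particular commutes with $\varphi$ and is trivial on inertia), should yield a natural identification
\[
\ddrig{D^{\mathrm{la}}(\Gamma, M)} \;\cong\; \ddrig{M} \hatot D^{\mathrm{la}}(\Gamma, A)^{\iota}
\]
as $(\varphi, \Gamma)$-modules. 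By Lemma \ref{CohomologyPsi}, the Herr complex of the right-hand side may be computed with $\psi$ in place of $\varphi$.

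The key step is then a Shapiro-type comparison: the natural map
\[
C^{\bullet}_{\psi, \gamma}\!\left( \ddrig{M} \hatot D^{\mathrm{la}}(\Gamma, A)^{\iota} \right) \;\longrightarrow\; C^{\bullet}_{\Iw}(\ddrig{M}) \;=\; \bigl[\, \ddrig{M} \xrightarrow{\psi - 1} \ddrig{M} \,\bigr],
\]
induced by evaluation at $1 \in \Gamma$ on the distribution factor, together with the computation of the $(\gamma - 1)$-cohomology of $D^{\mathrm{la}}(\Gamma, A)^{\iota}$ (which is concentrated in degree $0$ and equal to $A$ there, reflecting the fact that $D^{\mathrm{la}}(\Gamma, A)^{\iota}$ is ``induced'' from the trivial subgroup), should be a quasi-isomorphism of complexes of continuous $A$-modules. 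The vanishing for $i \neq 1, 2$ is then immediate from the two-term description of $C^{\bullet}_{\Iw}(\ddrig{M})$ in degrees $1$ and $2$.

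The hard part will be the Shapiro-type identification in the last step: the topological subtleties of locally analytic distributions and completed tensor products over Fr\'echet--Stein algebras need to be handled with care, and the evaluation map must be shown to induce a quasi-isomorphism at the level of complexes (rather than merely after passing to cohomology). Modulo this, chaining the three displayed isomorphisms yields the desired identification $\opn{H}^i_{\Iw}(\mbb{Q}_{p, \infty}, M) \cong \opn{H}^i_{\Iw}(\mbb{Q}_p, \ddrig{M})$ along with the vanishing in degrees other than $1$ and $2$.
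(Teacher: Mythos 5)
Your overall architecture (compare Galois cohomology of the cyclotomic deformation with the Herr complex, then reduce to the two-term $\psi$-complex) is the right one, but as written the proposal has a genuine gap: the ``Shapiro-type comparison'' that you defer with ``modulo this'' is not a technical verification left to the reader --- it \emph{is} the theorem being proven. The assertion that $C^{\bullet}_{\psi,\gamma}\bigl(\ddrig{M} \hatot D^{\mathrm{la}}(\Gamma,A)^{\iota}\bigr)$ is quasi-isomorphic to $\bigl[\ddrig{M} \xrightarrow{\psi-1} \ddrig{M}\bigr]$ in degrees $1,2$ is precisely Theorem 4.4.8 (hence Corollary 4.4.11) of \cite{kpx}, and its proof there requires substantial work: finiteness and base-change properties of the $\psi$-cohomology of families of $(\varphi,\Gamma)$-modules over relative Robba rings, the structure of $D^{\psi=1}$ and $D/(\psi-1)$, and a dévissage to the case of classical coefficients. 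The paper itself does not reprove any of this; its proof of the proposition is simply the citation of \cite[Cor.~4.4.11]{kpx}. So either you cite that result for your key step (in which case your first two displayed identifications are unnecessary detours --- KPX state the comparison directly for $\opn{H}^i_{\Iw}$), or you must actually supply the argument, which your sketch does not.

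There is also a concrete error in the one place where you do attempt the computation: the $(\gamma-1)$-cohomology of $D^{\mathrm{la}}(\Gamma,A)^{\iota}$ is \emph{not} concentrated in degree $0$. Distributions behave like the Iwasawa algebra, not like the induced module of functions $C(\Gamma,A)$ (for which your ``induced from the trivial subgroup'' heuristic would be correct). Via the Amice/Fourier isomorphism $D^{\mathrm{la}}(\Gamma,A) \cong \OO_{\WW}(\WW) \hatot A$, the operator $\gamma-1$ becomes multiplication by $[\gamma]-1$, which is injective (there are no translation-invariant locally analytic distributions), and its cokernel is $A$ via the augmentation $\mu \mapsto \mu(\mathbf{1})$; so the cohomology of $\bigl[D^{\mathrm{la}}(\Gamma,A)^{\iota} \xrightarrow{\gamma-1} D^{\mathrm{la}}(\Gamma,A)^{\iota}\bigr]$ sits in degree $1$, and it is exactly this shift that places the Iwasawa complex in degrees $[1,2]$. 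With your ``degree $0$'' claim the total complex would have cohomology in degrees $0,1$, contradicting the vanishing statement you are trying to prove. This slip is repairable, but together with the deferred main step it means the proposal does not yet constitute a proof; citing \cite[Cor.~4.4.11]{kpx}, as the paper does, is the intended route.
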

\begin{proof}
See Corollary 4.4.11 in \cite{kpx}.
\end{proof}


\section{Some p-adic Hodge theory} \label{prlog}

In this section we recall the construction of the Perrin-Riou logarithm following \cite[\S 3]{pottc} and use this map to show that if the $p$-adic $L$-function doesn't vanish then we obtain two linearly independent classes in $\opn{H}^1(\mbb{Q}_p, D^-(\eta^{-1}))$, where $D^-(\eta^{-1})$ is a certain $2$-dimensional $(\varphi, \Gamma)$-module defined in (\ref{defofD}). Throughout this section, $E$ is a finite extension of $\mbb{Q}_p$ and $D$ will denote a $(\varphi, \Gamma)$-module over the Robba ring $\bdrigE$. 

Recall from section \ref{OverconvergentPhiGammaModules} that if $D$ is a crystalline $(\varphi, \Gamma)$-module then one can associate to $D$ the Wach module $\nrig{D}$. This is a sub $(\bprig \hatot E)$-module of $D$ that is free of rank equal to the rank of $D$ and is stable under $\Gamma$. It will be useful to impose the following hypothesis on $D$:  
\begin{itemize}
\item[(H)] The Hodge--Tate weights of $D$ are non-negative and $p^n$ is not an eigenvalue of $\varphi$ on $D_{\mathrm{cris}}$ for all integers $n \geq 0$.
\end{itemize}   
The reason for imposing this hypothesis is to ensure that the ``big logarithm'' as constructed in \cite[\S 3]{pottc} lands in the lattice $D_{\mathrm{cris}} \otimes \Lambda_{\infty}$. 

\begin{lemma} \label{WachProperties}
Let $D$ be a crystalline $(\varphi, \Gamma)$-module satisfying hypothesis (H). Then
\begin{enumerate}
\item The inclusion $\nrig{D} \rightarrow D$ induces an isomorphism $\nrig{D}^{\psi = 1} \cong D^{\psi = 1}$, where $\psi$ is the left inverse to $\varphi$ coming from the trace map (see \S \ref{OverconvergentPhiGammaModules}). 

\item Let $\varphi^*\nrig{D}$ denote the sub $(\bprig \hatot E)$-module of $D$ generated by $\varphi(\nrig{D})$. Then there is an inclusion 
\[
(\varphi^*\nrig{D})^{\psi = 0} \subset D_{\mathrm{cris}} \otimes_{E} (\bprig \hatot E)^{\psi = 0}
\]   
\item We have an inclusion 
\[
\nrig{D} \subset q^{h_1} \varphi^*\nrig{D} \subset \varphi^*\nrig{D} 
\]
where $h_1$ is the smallest Hodge--Tate weight of $D$ and $q = \varphi(\pi)/\pi$. 
\end{enumerate}
\end{lemma}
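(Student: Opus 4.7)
The three statements are standard properties of Wach modules adapted to the $(\varphi,\Gamma)$-module setting, and the proof follows the same strategy as in \cite[\S 3]{pottc} and Berger's original work on Wach modules.

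For part (1), the plan is to show that the cokernel of $\nrig{D} \hookrightarrow D$ carries no $\psi=1$ element. Since $D$ is crystalline, inverting $t$ gives $\nrig{D}[t^{-1}] = D[t^{-1}]$, so the cokernel is controlled by the $t$-adic (equivalently $\pi$-adic) filtration. I would filter the quotient by pieces $(t^{-n}\nrig{D})/(t^{-n+1}\nrig{D})$ and observe that $\psi$ acts on each graded piece as $p^{-n}$ times the induced action on the reduction modulo $\pi$, which is an action on (a twist of) $D_{\mathrm{cris}}$. The hypothesis (H) that $p^n$ is not an eigenvalue of $\varphi$ on $D_{\mathrm{cris}}$ (combined with $\psi \circ \varphi = \mathrm{id}$) is precisely what forces each graded piece to have trivial $\psi=1$ eigenspace, so a $\psi=1$ eigenvector of $D$ cannot have any nontrivial image in $D/\nrig{D}$.

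For part (2), the plan is to work with a $(\bprig \hatot E)$-basis $e_1,\dots,e_d$ of $\nrig{D}$ that reduces modulo $\pi$ to a basis $\bar e_1,\dots,\bar e_d$ of $D_{\mathrm{cris}}$ (such a basis exists because $\nrig{D}/\pi\nrig{D} \cong D_{\mathrm{cris}}$). Writing a general element of $\varphi^*\nrig{D}$ as $\sum_i f_i\,\varphi(e_i)$ with $f_i \in \bprig \hatot E$, the identity $\psi(f\,\varphi(x)) = \psi(f)\cdot x$ translates the condition $\psi=0$ into $f_i \in (\bprig \hatot E)^{\psi=0}$ for each $i$. Invoking the classical decomposition $(\bprig \hatot E)^{\psi=0} = \bigoplus_{a\in(\mbb{Z}/p)^\times}(1+\pi)^a \varphi(\bprig \hatot E)$ and the fact that $\varphi(e_i) = \bar e_i \cdot (\text{unit}) + \pi(\cdots)$ after an appropriate normalization, one can rewrite $\sum f_i\,\varphi(e_i)$ as an element of $D_{\mathrm{cris}} \otimes_E (\bprig \hatot E)^{\psi=0}$. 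The main technical wrinkle is keeping track of the identification between $D_{\mathrm{cris}}$ and $\nrig{D}/\pi\nrig{D}$ and ensuring it is compatible with the inclusion into $D[t^{-1}]$; this is where I expect the principal difficulty to lie.

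For part (3), the plan is to invoke the structure of the map $\varphi^*\nrig{D}\hookrightarrow \nrig{D}$: its elementary divisors are powers of $q=\varphi(\pi)/\pi$, and the exponents are exactly the Hodge--Tate weights of $D$ (this is essentially the $(\varphi,\Gamma)$-module avatar of the Hodge--Pink lattice description). In particular, because $h_1$ is the smallest Hodge--Tate weight (and non-negative by (H)), the smallest elementary divisor is $q^{h_1}$, which yields the asserted containment sandwich; the right-hand inclusion $q^{h_1}\varphi^*\nrig{D}\subset \varphi^*\nrig{D}$ is immediate since $q \in \bprig \hatot E$. This part is essentially formal once the elementary divisor description is in hand, and it can be extracted from \cite[\S 3]{pottc}.
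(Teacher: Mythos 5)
Your proposal sketches plausible-looking arguments, but for parts (1) and (2) it leaves gaps at exactly the points the paper handles by citation. The paper's proof is short: it checks that hypothesis (H) forces $a(D) := \max\{-h_1, \lambda(D)+1\}$ to be non-positive and then quotes \cite[Theorems 3.1 and 3.3]{pottc} (Berger's results on Wach modules), with part (2) deduced from the inclusion $\nrig{D} \subset (t/\pi)^{h_1}\bigl(D_{\mathrm{cris}} \otimes (\bprig \hatot E)\bigr) \subset D_{\mathrm{cris}} \otimes (\bprig \hatot E)$ together with the bijectivity of $\varphi$ (hence $\psi$) on $D_{\mathrm{cris}}$. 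For part (1) your plan amounts to re-proving that theorem, and the sketch contains an incorrect step: over the Robba ring one does \emph{not} have $\nrig{D}[t^{-1}] = D[t^{-1}]$ (the correct relation is $D = \bdrig \otimes_{\bprig} \nrig{D}$; $\bdrig$ is not obtained from $\bprig$ by inverting $t$), so the quotient $D/\nrig{D}$ is not exhausted by the pieces $t^{-n}\nrig{D}/t^{-n+1}\nrig{D}$, and the asserted description of $\psi$ on the graded pieces is not justified. As written, part (1) is not a proof but a gesture toward Berger's dévissage.

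For part (2), your semilinearity observation $\psi\bigl(\sum_i f_i\varphi(e_i)\bigr) = \sum_i \psi(f_i)e_i$ is correct and is indeed the key algebraic point, but the remaining step — landing in $D_{\mathrm{cris}} \otimes_E (\bprig \hatot E)^{\psi=0}$ — is exactly where your argument stops. The identification $\nrig{D}/\pi\nrig{D} \cong D_{\mathrm{cris}}$ is a quotient map, not an embedding, so ``$\varphi(e_i) = \bar{e}_i\cdot(\text{unit}) + \pi(\cdots)$'' does not let you rewrite the element inside $D_{\mathrm{cris}}\otimes(\bprig\hatot E)^{\psi=0}$; the wrinkle you flag is the whole content, and it is resolved not by a normalization but by the comparison inclusion $\nrig{D} \subset D_{\mathrm{cris}}\otimes(\bprig\hatot E)$ valid because $h_1 \geq 0$ (second bullet of \cite[Theorem 3.1]{pottc}). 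Once that is invoked, one gets $\varphi^*\nrig{D} \subset D_{\mathrm{cris}}\otimes(\bprig\hatot E)$ and $(D_{\mathrm{cris}}\otimes(\bprig\hatot E))^{\psi=0} = D_{\mathrm{cris}}\otimes(\bprig\hatot E)^{\psi=0}$ since $\psi$ acts as $\varphi^{-1}\otimes\psi$ there; the decomposition of $(\bprig\hatot E)^{\psi=0}$ into the pieces $(1+\pi)^a\varphi(\bprig\hatot E)$ is not needed. Part (3) is fine in substance and coincides with the paper's route: both rest on the structure of $\varphi$ on $\nrig{D}$ (the $q$-power elementary divisors given by the Hodge--Tate weights) from \cite[Theorem 3.1]{pottc}, plus $h_1 \geq 0$ and $q \in \bprig$.
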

\begin{proof}
Let $h_1 \leq h_2 \leq \cdots \leq h_d$ be the Hodge--Tate weights of $D$. Since $D$ has non-negative Hodge--Tate weights and $p^n$ is not an eigenvalue for $\varphi$ for all $n \geq 0$, the quantity $a(D) \defeq \opn{max}\{-h_1, \lambda(D) +1 \}$ is non-positive, where $\lambda(D)$ is the largest integer (or $-\infty$ if there is no such one) such that $\varphi - p^{\lambda(D)}$ is not bijective on $D_{\mathrm{cris}}$. The first part then follows from \cite[Theorem 3.3]{pottc}. 

For the second part, we note that from the second bullet point in Theorem 3.1 in \emph{op. cit.}
\[
\nrig{D} \subset \left(\frac{t}{\pi}\right)^{h_1} (D_{\mathrm{cris}} \otimes (\bprig \hatot E) ) \subset (D_{\mathrm{cris}} \otimes (\bprig \hatot E))
\]
where the last inclusion follows because the Hodge--Tate weights are non-negative. Since $\varphi$ is an isomorphism on $D_{\mathrm{cris}}$, $\psi$ is also an isomorphism and we have $\varphi^*(D_{\mathrm{cris}} \otimes (\bprig \hatot E)) = (D_{\mathrm{cris}} \otimes (\bprig \hatot E))$. Combining these facts we obtain the inclusion in part $2$.

Similarly the third part follows from the third bullet point in Theorem 3.1 in \emph{loc.~cit.}, using the fact that $h_1 \geq 0$ and that $q \in \bprig$. 
\end{proof}

\subsection{Perrin-Riou's big logarithm}

Let $\Lambda_{\infty}$ denote the global sections of $\invs{W}_E$ - this can be identified with a subring of the ring of power series $E[\Delta]\db{\gamma - 1}$, where $\Delta$ is the torsion subgroup of $\Gamma$ and $\gamma$ is a topological generator of $\Gamma/\Delta$. The ring $(\bprig \hatot E)^{\psi = 0}$ has an action of $\Gamma$ which extends to an action of $\Lambda_{\infty}$ via the Mellin transform:
\begin{eqnarray}
\ide{M}\colon & \Lambda_{\infty} & \xrightarrow{\sim}  (\bprig \hatot E)^{\psi = 0} \nonumber \\
 & f(\gamma - 1) & \mapsto  f(\gamma - 1) \cdot (1 + \pi) \nonumber
\end{eqnarray}
Therefore $(\bprig \hatot E)^{\psi = 0}$ can be viewed as a free $\Lambda_{\infty}$-module of rank one.

\begin{definition}
Suppose that $D$ is a crystalline $(\varphi, \Gamma)$-module satisfying hypothesis (H). The Perrin-Riou logarithm $\invs{L} = \invs{L}_D : \opn{H}^1_{\Iw}(\mbb{Q}_{p}, D) \to D_{\mathrm{cris}} \hatot \Lambda_{\infty}$ is defined as the composition of the following maps:
\begin{eqnarray}
\opn{H}^1_{\Iw}(\mbb{Q}_{p}, D)  \cong  D^{\psi = 1} & = & \nrig{D}^{\psi = 1} \nonumber \\
 & \xrightarrow{1 - \varphi} & (\varphi^*\nrig{D})^{\psi = 0} \nonumber \\
 & \rightarrow & D_{\mathrm{cris}} \hatot_{E} (\bprig \hatot E)^{\psi = 0} \xrightarrow{\sim}  D_{\mathrm{cris}} \hatot_{E} \Lambda_{\infty} \nonumber    
\end{eqnarray}
where the second, third and fourth maps exist by Lemma \ref{WachProperties}. Here the last map is given by the inverse of the Mellin transform described above. Since all of the maps above are functorial in $D$, we see that $\invs{L}$ is a map of $\Lambda_{\infty}$-modules that is also functorial in $D$.
\end{definition} 

In the case that $D$ comes from a crystalline $p$-adic representation, the map $\invs{L}$ is a special case of a more general construction by Perrin-Riou (\cite{PerrinRiou}) which was later interpreted using $(\varphi, \Gamma)$-modules by Lei, Loeffler and Zerbes (\cite{LLZ-coleman-map}). This was generalised to potentially crystalline $(\varphi, \Gamma)$-modules by Pottharst (\cite{pottc}) and to de Rham $(\varphi, \Gamma)$-modules by Nakamura (\cite{Nakamura2014}). The key property of this map is that it interpolates the Bloch--Kato logarithm and dual exponential maps at certain classical specialisations. More precisely, let $\eta \colon \Gamma \to (E')^{\times}$ be a continuous character, where $\Gamma = \Gal(\mbb{Q}_p(\mu_{p^{\infty}})/\mbb{Q}_p)$, and consider the induced map $\Lambda_{\infty} \to E'$ which we will also denote by $\eta$. Then we have two specialisation maps: the first on Iwasawa cohomology 
\[
\opn{H}^1_{\Iw}(\mbb{Q}_{p}, D) \xrightarrow{\mathrm{pr}_{\eta}} \opn{H}^1(\mbb{Q}_p, D_{E'}(\eta^{-1}) )
\]
induced from the map on the Herr complexes $C^{\bullet}_{Iw}(D) \to C^{\bullet}_{\psi, \gamma}(D(\eta^{-1}))$ which in degree one is given by $x \mapsto (0, x)$ (recall that by Lemma \ref{CohomologyPsi} the cohomology of a $(\varphi, \Gamma)$-module can be calculated with $\psi$ in place of $\varphi$). The second specialisation map is 
\[
D_{\mathrm{cris}} \hatot \Lambda_{\infty} \xrightarrow{\mathrm{ev}_{\eta}} D_{\mathrm{cris}} \otimes_{\mbb{Q}_p} E' 
\]
induced from $\eta \colon \Lambda_{\infty} \to E'$. Then for classical $\eta$ (i.e. $\eta = \chi_{\mathrm{cycl}}^j$ for some integer $j$) there is a commutative diagram
\[
\begin{tikzcd}
{\opn{H}^1_{\Iw}(\mbb{Q}_{p}, D)} \arrow[d, "\mathrm{pr}_{\eta}"'] \arrow[r, "\invs{L}"] & D_{\mathrm{cris}} \hatot \Lambda_{\infty} \arrow[d, "\mathrm{ev}_{\eta}"] \\
{\opn{H}^1(\mbb{Q}_p, D_{E'}(\eta^{-1}))} \arrow[r, dashed] & D_{\mathrm{cris}} \otimes E'
\end{tikzcd}
\] 
where the dotted arrow is (up to some Euler factors) the Bloch--Kato logarithm in the range $j < h_1$, and the dual exponential map in the range $j \geq h_1$, where $h_1$ is the smallest Hodge--Tate weight of $D$ (see \cite{Nakamura2014}). We will study this map at non-classical specialisations.

\begin{proposition} \label{propertiesofL}
Let $D$ be a crystalline $(\varphi, \Gamma)$-module that satisfies hypothesis (H). The map $\invs{L} = \invs{L}_D$ satisfies the following properties:
\begin{enumerate}
\item Let $k \geq 0$ be an integer and let $\omega_{-k}$ denote the automorphism of $\Lambda_{\infty}$ which sends $\gamma \in \Gamma$ to the element $\chi_{\mathrm{cycl}}(\gamma)^{-k}\gamma$. Then we have a commutative diagram 
\[
\begin{tikzcd}
{\omega_{-k}^*\opn{H}^1_{\Iw}(\mbb{Q}_{p}, D)} \arrow[r, "\sim"] \arrow[d, "\omega_{-k}^*\invs{L}_D"'] & {\opn{H}^1_{\Iw}(\mbb{Q}_{p}, D(k))} \arrow[d, "\invs{L}_{D(k)}"] \\
\omega_{-k}^*\left( D_{\mathrm{cris}} \hatot \Lambda_{\infty} \right) \arrow[r, "\sim"] & D(k)_{\mathrm{cris}} \hatot \Lambda_{\infty}
\end{tikzcd}
\]
where for a $\Lambda_{\infty}$-module $M$, $\omega_{-k}^*M$ denotes the pull-back $M \otimes_{\Lambda_{\infty}, \omega_{-k}} \Lambda_{\infty}$.
\item For any character $\eta\colon \Gamma \to (E')^{\times}$ there exists a $\Lambda_{\infty}$-linear morphism making the following diagram commute:
\[
\begin{tikzcd}
{\opn{H}^1_{\Iw}(\mbb{Q}_{p}, D)} \arrow[d, "\mathrm{pr}_{\eta}"'] \arrow[r, "\invs{L}"] & D_{\mathrm{cris}} \hatot \Lambda_{\infty} \arrow[d, "\mathrm{ev}_{\eta}"] \\
{\opn{im}(\mathrm{pr}_{\eta})} \arrow[r, dashed] & D_{\mathrm{cris}} \otimes E'
\end{tikzcd}
\]
where $E'$ is a finite extension of $E$.
\end{enumerate}
\end{proposition}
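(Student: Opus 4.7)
The plan is to verify each part separately: Part 1 by direct computation, Part 2 by a base change argument.

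\textbf{Part 1.} The strategy is to identify $D(k) = D \otimes \bdrigE \cdot e_k$ where $e_k$ is a basis on which $\varphi$ acts trivially and $\gamma \in \Gamma$ acts by $\chi_{\mathrm{cycl}}(\gamma)^k$, and then check that each of the four arrows defining $\mathcal{L}_D$ commutes with this twist. The first arrow uses the canonical identification $D(k)^{\psi=1} = D^{\psi=1} \cdot e_k$, which exhibits $\opn{H}^1_{\Iw}(\mbb{Q}_p, D(k))$ as $\omega_{-k}^*\opn{H}^1_{\Iw}(\mbb{Q}_p, D)$ since the extra factor of $e_k$ twists the $\Gamma$-action precisely by $\chi_{\mathrm{cycl}}^k$. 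The second and third arrows (namely $1 - \varphi$ and the inclusion into $D_{\mathrm{cris}} \otimes (\bprig \hatot E)^{\psi=0}$) commute with multiplication by $e_k$ because $\varphi(e_k) = e_k$ and one has natural identifications $\nrig{D(k)} = \nrig{D} \cdot e_k$ and $D(k)_{\mathrm{cris}} = D_{\mathrm{cris}} \cdot t^{-k} e_k$. The last arrow, the inverse Mellin transform, is compatible with $\omega_{-k}$ by a direct computation using the relation $\gamma(t) = \chi_{\mathrm{cycl}}(\gamma) t$. Concatenating the four compatibilities yields the commutative square.

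\textbf{Part 2.} Since $\mathcal{L}$ is $\Lambda_\infty$-linear by construction and $\mathrm{ev}_\eta$ is the scalar extension along $\eta \colon \Lambda_\infty \to E'$, the composition $\mathrm{ev}_\eta \circ \mathcal{L}$ is $\Lambda_\infty$-linear, where $E'$ is a $\Lambda_\infty$-algebra via $\eta$. Similarly $\mathrm{pr}_\eta$ is $\Lambda_\infty$-linear. Both maps therefore factor through the base change $\opn{H}^1_{\Iw}(\mbb{Q}_p, D) \otimes_{\Lambda_\infty, \eta} E'$, and the plan is to use this common factorization to produce the dashed arrow. Concretely, one must verify the kernel containment
\[
\ker\bigl(\opn{H}^1_{\Iw}(\mbb{Q}_p, D) \otimes_{\Lambda_\infty, \eta} E' \twoheadrightarrow \opn{im}(\mathrm{pr}_\eta)\bigr) \subset \ker(\mathrm{ev}_\eta \circ \mathcal{L}),
\]
which I would attack by identifying the left-hand kernel via the base change spectral sequence for analytic Iwasawa cohomology of $(\varphi, \Gamma)$-modules from \cite{kpx}, relating it to Tor-terms involving $\opn{H}^2_{\Iw}(\mbb{Q}_p, D)$, and showing these contributions are annihilated by $\mathcal{L}$ by reducing to classical weights via Part 1 together with density of classical characters in weight space.

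The main obstacle will be Part 2: upgrading the tautological factorization through the base change to a factorization through $\opn{im}(\mathrm{pr}_\eta)$ requires careful control over the comparison map. A cleaner alternative would be to appeal directly to Nakamura's extension \cite{Nakamura2014} of the Perrin-Riou logarithm to de Rham $(\varphi, \Gamma)$-modules, in which the factorization through specialization at an arbitrary $\eta$ is built into the construction and the $\Lambda_\infty$-linearity is manifest.
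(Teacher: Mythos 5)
Your Part 1 is essentially what the paper does: the paper disposes of the first diagram by ``carefully tracing through the definitions,'' and your four-step compatibility check (on $\psi$-invariants, Wach modules, $D_{\mathrm{cris}}$ and the Mellin transform) is exactly that tracing, so no issue there.

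Part 2 is where your route diverges, and as written it has a gap. The kernel you propose to control is misidentified: if you invoke the derived base-change property of analytic Iwasawa cohomology from \cite{kpx}, the natural map $\opn{H}^1_{\Iw}(\mbb{Q}_p, D) \otimes_{\Lambda_{\infty}, \eta} E' \to \opn{H}^1(\mbb{Q}_p, D_{E'}(\eta^{-1}))$ is \emph{injective}; the Tor-term involving $\opn{H}^2_{\Iw}(\mbb{Q}_p, D)$ sits in the \emph{cokernel} of this map (it measures the failure of $\mathrm{pr}_{\eta}$ to be surjective), not in the kernel of the surjection onto $\opn{im}(\mathrm{pr}_{\eta})$. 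So either you use the base-change comparison correctly, in which case the kernel containment you worry about is vacuous and the dashed arrow exists at once, or you must supply a genuine argument — and the finishing step you propose, ``annihilating'' the Tor contributions by $\invs{L}$ via Part 1 and density of classical characters, is not one: $\invs{L}$ is not defined on those contributions, and no continuity mechanism is given that would transport a statement from classical $\eta$ to arbitrary $\eta$. More importantly, none of this machinery is needed. The paper's proof is an explicit two-line computation: it suffices to show that $\mathrm{pr}_{\eta}(x) = 0$ forces $\mathrm{ev}_{\eta}(\invs{L}(x)) = 0$, since then the dashed map is defined by choosing any lift to $\opn{H}^1_{\Iw}(\mbb{Q}_p, D)$. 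If $\mathrm{pr}_{\eta}(x) = 0$, unwinding the Herr complex $C^{\bullet}_{\psi, \gamma}(D(\eta^{-1}))$ produces $y \in D^{\psi = 1}$ with $x = (\gamma - \eta(\gamma))y$; by $\Lambda_{\infty}$-linearity of $\invs{L}$ (after base change to $E'$) one gets $\invs{L}(x) = (\gamma - \eta(\gamma))\invs{L}(y)$, which is visibly killed by $\mathrm{ev}_{\eta}$. This replaces your spectral-sequence step entirely and avoids any appeal to base-change theorems; if you prefer your framework, the correct repair is simply to quote the injectivity statement above rather than the density argument.
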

\begin{proof}
The first part follows from carefully tracing through the definitions.  

For the second part it is enough to show that if $\mathrm{pr}_{\eta}(x) = 0$ then $\mathrm{ev}_{\eta}(\invs{L}(x)) = 0$, because then we can just define the map by taking a lift to $\opn{H}^1_{\Iw}(\mbb{Q}_{p}, D)$. If $\mathrm{pr}_{\eta}(x) = 0$ then there exists $y \in D^{\psi = 1}$ such that 
\[
x = (\gamma - \eta(\gamma) ) y.
\]
But $\invs{L}$ is $\Lambda_{\infty}$-linear so (after base-changing $D$ and $\Lambda_{\infty}$ to $D_{E'}$ and $(\Lambda_{\infty})_{E'}$ respectively) 
\[
\invs{L}(x) = (\gamma - \eta(\gamma) ) \invs{L}(y).
\]
But this is precisely mapped to zero under $\mathrm{ev}_{\eta}$. 
\end{proof}

From the above proposition we obtain the following corollary which will be useful in later sections.

\begin{corollary} \label{corlinearindependence}
Let $D$ be a $(\varphi, \Gamma)$-module satisfying hypothesis (H) and $D_1, D_2 \subset D$ two sub-$(\varphi, \Gamma)$-modules satisfying $D_{\mathrm{cris}} = (D_1)_{\mathrm{cris}} \oplus (D_2)_{\mathrm{cris}}$. Let $x_1$ and $x_2$ be two elements of $\opn{H}^1_{\Iw}(\mbb{Q}_p, D)$ that lie in the images of the maps $\opn{H}^1_{\Iw}(\mbb{Q}_p, D_1) \to \opn{H}^1_{\Iw}(\mbb{Q}_p, D)$ and $\opn{H}^1_{\Iw}(\mbb{Q}_p, D_2) \to \opn{H}^1_{\Iw}(\mbb{Q}_p, D)$ respectively. Then for any character $\eta\colon G \to E^{\times}$ the elements $\mathrm{pr}_{\eta}(x_1)$ and $\mathrm{pr}_{\eta}(x_2)$ are linearly independent if both $\mathrm{ev}_{\eta}\invs{L}(x_1)$ and $\mathrm{ev}_{\eta}\invs{L}(x_2)$ are non-zero.
\end{corollary}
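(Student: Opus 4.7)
The plan is to exploit the functoriality of the Perrin-Riou logarithm together with the direct sum decomposition of $D_{\mathrm{cris}}$. The argument will reduce the linear independence of $\mathrm{pr}_\eta(x_1)$ and $\mathrm{pr}_\eta(x_2)$ in $\opn{H}^1(\mbb{Q}_p, D_{E'}(\eta^{-1}))$ to the linear independence of their images $\mathrm{ev}_\eta\invs{L}(x_i)$ in $D_{\mathrm{cris}} \otimes_E E'$, which will be automatic from the hypotheses.

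First, I would verify that $D_1$ and $D_2$ inherit hypothesis (H) from $D$: the Hodge--Tate weights of each sub-$(\varphi,\Gamma)$-module $D_i$ form a subset of those of $D$ (hence are non-negative), and the eigenvalues of $\varphi$ on $(D_i)_{\mathrm{cris}}$ form a subset of those of $\varphi$ on $D_{\mathrm{cris}}$ (so none equal $p^n$). This allows us to form $\invs{L}_{D_i}$, and the functoriality of the big logarithm with respect to the inclusions $D_i \hookrightarrow D$ then shows that $\invs{L}_D(x_i)$ lies in the image of the inclusion $(D_i)_{\mathrm{cris}} \hatot \Lambda_\infty \hookrightarrow D_{\mathrm{cris}} \hatot \Lambda_\infty$. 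Specialising via $\mathrm{ev}_\eta$, we find that $\mathrm{ev}_\eta\invs{L}(x_i)$ lies in the direct summand $(D_i)_{\mathrm{cris}} \otimes_E E'$ of $D_{\mathrm{cris}} \otimes_E E'$.

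Second, applying part (2) of Proposition \ref{propertiesofL} to $D$ furnishes a $\Lambda_\infty$-linear map $L_\eta\colon \opn{im}(\mathrm{pr}_\eta) \to D_{\mathrm{cris}} \otimes_E E'$ satisfying $L_\eta \circ \mathrm{pr}_\eta = \mathrm{ev}_\eta \circ \invs{L}$ on $\opn{H}^1_{\Iw}(\mbb{Q}_p, D)$. In particular $L_\eta(\mathrm{pr}_\eta(x_i)) = \mathrm{ev}_\eta\invs{L}(x_i)$ for $i=1,2$. Suppose now that $a \cdot \mathrm{pr}_\eta(x_1) + b \cdot \mathrm{pr}_\eta(x_2) = 0$ for some scalars $a, b \in E'$. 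Applying $L_\eta$ yields $a \cdot \mathrm{ev}_\eta\invs{L}(x_1) + b \cdot \mathrm{ev}_\eta\invs{L}(x_2) = 0$ inside $D_{\mathrm{cris}} \otimes_E E'$. Since the two terms lie in complementary direct summands, each must vanish individually, and the nonvanishing hypothesis forces $a = b = 0$.

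The entire argument is essentially algebraic once the two structural inputs are in place, so there is no serious obstacle. The only technical point worth flagging is the inheritance of hypothesis (H) by $D_1$ and $D_2$, which ensures $\invs{L}_{D_i}$ is defined and allows the functoriality square for Perrin-Riou's logarithm to be invoked; this is handled by the observations above.
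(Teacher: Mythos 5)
Your proposal is correct and follows essentially the same route as the paper: functoriality of $\invs{L}$ places $\mathrm{ev}_{\eta}\invs{L}(x_i)$ in the complementary summands $(D_i)_{\mathrm{cris}}\otimes E'$, and the $\Lambda_{\infty}$-linear map from part (2) of Proposition \ref{propertiesofL} transports any linear relation among the $\mathrm{pr}_{\eta}(x_i)$ to one among these elements, forcing the coefficients to vanish. Your additional check that $D_1, D_2$ inherit hypothesis (H) is a sensible point the paper leaves implicit, but it does not change the argument.
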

\begin{proof}
This follows from functoriality of $\invs{L}$ and the existence of the bottom map in part (2) of Proposition \ref{propertiesofL}. Indeed, if both $\mathrm{ev}_{\eta}\invs{L}(x_1)$ and $\mathrm{ev}_{\eta}\invs{L}(x_2)$ are non-zero, then because they lie in different direct summands of $D_{\mathrm{cris}}$ they must be linearly independent in $D_{\mathrm{cris}} \hatot \Lambda_{\infty}$.  
\end{proof}

\subsection{Application to Beilinson--Flach classes} \label{ApplicationTo}

Returning to the situation in the paper, let $f$ and $g$ denote eigenforms satisfying the assumptions in section \ref{SummaryOfResults}, so we have Coleman families $\invs{F}, \invs{G}_{\alpha}, \invs{G}_{\beta}$ over $V_1, V_2^{\alpha}, V_2^{\beta}$ passing through the $p$-stabilisations $f_{\alpha}, g_{\alpha}, g_{\beta}$ respectively. Here $f_{\alpha}$ denotes the $p$-stabilisation of $f$ that satisfies $U_p f = \alpha f$; and similarly for $g_{\alpha}$ and $g_{\beta}$. Since we have assumed that the weights of $f$ and $g$ are not equal, we can choose $V_1$ such that it doesn't contain the character $k'$. In this subsection $\invs{G}$ and $V_2$ will denote either $\invs{G}_{\alpha}$ and $V_2^{\alpha}$ or $\invs{G}_{\beta}$ and $V_2^{\beta}$ respectively. 

Let $M_{V_1}(\invs{F})$ and $M_{V_2}(\invs{G})$ denote the Galois representations associated to $\invs{F}$ and $\invs{G}$ and let $\ddrig{\invs{F}}^*$ and $\ddrig{\invs{G}}^*$ denote the $(\varphi, \Gamma)$-modules of $M_{V_1}(\invs{F})^*$ and $M_{V_2}(\invs{G})^*$ respectively. If $V_1$ and $V_2$ are small enough, both of these modules come with a canonical triangulation which we will denote as
\[
0 \rightarrow \F{+}{\ddrig{?}^*} \rightarrow \ddrig{?}^* = \F{o}{\ddrig{?}^*} \rightarrow \F{-}{\ddrig{?}^*} \rightarrow 0
\]
for $? = \invs{F}$ or $?=\invs{G}$. In fact there is an explicit description for both the kernel and the cokernel (see \cite[Theorem 6.3.2]{lz-coleman}). 

Let $k_1$ be a (not necessarily classical) weight in $V_1$ and $\eta$ a character of $\Gamma = \Gal(\mbb{Q}_p(\mu_{p^{\infty}})/\mbb{Q}_p)$, and let $E$ be a finite extension of $\mbb{Q}_p$ that contains the fields of definition of $k_1$ and $\eta$. Recall that $k'+2$ denotes the weight of $g$. Let $M(\invs{F}_{k_1})$ and $M(\invs{G}_{k'})$ denote the specialisations of $M_{V_1}(\invs{F})$ and $M_{V_2}(\invs{G})$ at $k_1$ and $k'$ respectively, and note that we have isomorphisms 
\begin{eqnarray}
M((\invs{G}_{\alpha})_{k'}) & \xrightarrow{\mathrm{pr}^{\alpha}} & M_E(g) \nonumber \\
M((\invs{G}_{\beta})_{k'}) & \xrightarrow{\mathrm{pr}^{\beta}} & M_E(g) \nonumber  
\end{eqnarray}
both of which follow from the fact that $g$ is classical and the Galois representation doesn't change after $p$-stabilisation. We let $\ddrig{\invs{F}_{k_1}}^*$, $\ddrig{\invs{G}_{k'}}^*$ and $\ddrig{g}^*$ denote the $(\varphi, \Gamma)$-modules associated to $M(\invs{F}_{k_1})^*$, $M(\invs{G}_{k'})^*$ and $M_E(g)^*$ respectively. By specialising the triangulation above and applying $\mathrm{pr}^{\alpha}$ or $\mathrm{pr}^{\beta}$ if necessary, we obtain triangulations for these three $(\varphi, \Gamma)$-modules. 

Let $D^-$ be the $(\varphi, \Gamma)$-module
\begin{equation} \label{defofD}
D^- = \F{-}{\ddrig{\invs{F}_{k_1}}^*} \otimes \F{o}{\ddrig{g}^*}.
\end{equation}
Since $g$ is classical, $D^-$ is crystalline with Hodge--Tate weights ${0, 1+k'}$ and by the explicit description in \cite[Theorem 6.3.2]{lz-coleman}, $p^n$ is not an eigenvalue for $\varphi$ on $D^-_{\mathrm{cris}}$ for any integer $n \geq 0$ (so $D^-$ satisfies hypothesis (H)). Consider the following submodules 
\begin{eqnarray}
D_1 = D^{\alpha} & = & \mathrm{pr}^{\alpha} \left( \F{-}{\ddrig{\invs{F}_{k_1}}^*} \otimes \F{+}{\ddrig{(\invs{G}_{\alpha})_{k'}}^*} \right) \nonumber \\
D_2 = D^{\beta} & = & \mathrm{pr}^{\beta} \left( \F{-}{\ddrig{\invs{F}_{k_1}}^*} \otimes \F{+}{\ddrig{(\invs{G}_{\beta})_{k'}}^*} \right) \nonumber 
\end{eqnarray}
where $\mathrm{pr}^{\alpha}$ and $\mathrm{pr}^{\beta}$ are the isomorphisms described above. Again, by the explicit description in \emph{loc. cit.}, $\Cris{(D_1)}$ and $\Cris{(D_2)}$ are both rank one sub $\varphi$-modules of $\Cris{D}^-$ on which $\varphi$ acts by multiplication by $\alpha_{\invs{F}_{k_1}}^{-1}\beta_g^{-1}$ and $\alpha_{\invs{F}_{k_1}}^{-1}\alpha_g^{-1}$ respectively. Since we have assumed that $g$ is $p$-regular (i.e. $\alpha_g \neq \beta_g$) we must have $\Cris{D}^- = \Cris{(D_1)} \oplus \Cris{(D_2)}$. 

Let $c > 6$ be an integer that is coprime to $6Np$ and let $z_1^{\alpha}$ be the image of the Beilinson--Flach class $_c \invs{BF}^{[\invs{F}, \invs{G}_{\alpha}]}_{1, 1}$ (see \S \ref{Interpolation}) under the composition
\[
\opn{H}^1(\mbb{Q}, D^{\mathrm{la}}(\Gamma, M(\invs{F}_{k_1})^* \otimes M((\invs{G}_{\alpha})_{k'})^*)) \to \opn{H}^1_{\Iw}(\mbb{Q}_{p, \infty}, M(\invs{F}_{k_1})^* \otimes M(g)^*) \to \opn{H}^1_{\Iw}(\mbb{Q}_p, D^-)  
\] 
and similarly for $z_1^{\beta}$, where the first map is restriction to the decomposition group at $p$ composed with the isomorphism $\mathrm{pr}^{\alpha}$. Recall that $L_p(\invs{F}, g, 1 +\mathbf{j})$ is the two-variable $p$-adic $L$-function associated to the Coleman family $\invs{F}$ and the universal twist $\mathbf{j}$ (see \cite{Urban} or \cite[\S 9]{lz-coleman}).  

\begin{proposition} \label{proplinearindependence}
We can choose the auxiliary integer $c$ such that, if $L_p(\invs{F}_{k_1}, g, 1 + \eta) \neq 0$ then $\mathrm{pr}_{\eta}(z_1^{\alpha})$ and $\mathrm{pr}_{\eta}(z_1^{\beta})$ are linearly independent in $\opn{H}^1(\mbb{Q}_p, D^-(\eta^{-1}))$.   
\end{proposition}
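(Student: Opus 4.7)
The strategy is to verify the hypotheses of Corollary \ref{corlinearindependence} with $D_1 = D^{\alpha}$, $D_2 = D^{\beta}$, $x_1 = z_1^{\alpha}$ and $x_2 = z_1^{\beta}$. This requires two things: (a) show that $z_1^{\alpha}$ lies in the image of $\opn{H}^1_{\Iw}(\mbb{Q}_p, D^{\alpha}) \to \opn{H}^1_{\Iw}(\mbb{Q}_p, D^-)$ (and analogously for $\beta$), and (b) show that $\opn{ev}_{\eta}\invs{L}_{D^-}(z_1^{\alpha})$ and $\opn{ev}_{\eta}\invs{L}_{D^-}(z_1^{\beta})$ are both non-zero whenever $L_p(\invs{F}_{k_1}, g, 1+\eta) \neq 0$. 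Since the $\varphi$-eigenvalues on $(D^{\alpha})_{\cris}$ and $(D^{\beta})_{\cris}$ are $\alpha_{\invs{F}_{k_1}}^{-1}\beta_g^{-1}$ and $\alpha_{\invs{F}_{k_1}}^{-1}\alpha_g^{-1}$ respectively and $\alpha_g \neq \beta_g$, these two crystalline lines are distinct direct summands of $D^-_{\cris}$, so (a)+(b) immediately give linear independence by the corollary.

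For (a), I will invoke the ``triangular'' property of the Beilinson--Flach classes established in \cite[\S 7.1]{lz-coleman}: when we use the $\alpha$-stabilisation $\invs{G}_{\alpha}$ of $g$, the image of $_c \invs{BF}_{1,1}^{[\invs{F}, \invs{G}_{\alpha}]}$ in $\opn{H}^1_{\Iw}(\mbb{Q}_p, \F{-}{\ddrig{\invs{F}_{k_1}}^*} \otimes \F{-}{\ddrig{(\invs{G}_{\alpha})_{k'}}^*})$ vanishes. Combined with the short exact sequence of $(\varphi,\Gamma)$-modules $0 \to \F{-}{\ddrig{\invs{F}_{k_1}}^*} \otimes \F{+}{\ddrig{(\invs{G}_{\alpha})_{k'}}^*} \to \F{-}{\ddrig{\invs{F}_{k_1}}^*} \otimes \ddrig{(\invs{G}_{\alpha})_{k'}}^* \to \F{-}{\ddrig{\invs{F}_{k_1}}^*} \otimes \F{-}{\ddrig{(\invs{G}_{\alpha})_{k'}}^*} \to 0$, this vanishing forces the projection of $z_1^{\alpha}$ to $D^-$ to lift to a class in $\opn{H}^1_{\Iw}(\mbb{Q}_p, D^{\alpha})$. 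An identical argument, interchanging the roles of the two $p$-stabilisations of $g$, handles $z_1^{\beta}$.

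For (b), I will apply the explicit reciprocity law of Loeffler--Zerbes, as encoded via the Perrin-Riou logarithm $\invs{L}_{D^{\alpha}}$ (respectively $\invs{L}_{D^{\beta}}$). In the language of \cite[\S 7--9]{lz-coleman}, this identifies $\invs{L}_{D^{\alpha}}(z_1^{\alpha})$ with the two-variable $p$-adic $L$-function $L_p(\invs{F}, g, 1 + \mathbf{j})$ specialised at $k_1$, up to (i) a non-zero constant depending on a basis of $(D^{\alpha})_{\cris}$, (ii) an Euler-type factor at $p$ accounting for the passage between $g$ and $g_{\alpha}$, and (iii) the $c$-smoothing factor $\bigl(c^2 - c^{-(\kappa_1 + k' + 2\mathbf{j})}\varepsilon_{\invs{F}}(c)^{-1}\varepsilon_g(c)^{-1}\bigr)$ inherent to the construction of the $c$-twisted classes. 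Hypothesis (NLZ) at $\mathbf{x}$ ensures that the Euler factor in (ii) is non-zero after specialisation at $\eta$; and one can choose $c$ so that (iii) is also non-zero at $(k_1, k', \eta)$ (any $c$ outside a proper Zariski-closed subset works, since the smoothing factor is a non-zero analytic function on the relevant weight space). Functoriality of $\invs{L}$ under the inclusion $D^{\alpha} \hookrightarrow D^-$, together with Proposition \ref{propertiesofL}(2), shows that $\invs{L}_{D^-}(z_1^{\alpha})$ agrees with the image of $\invs{L}_{D^{\alpha}}(z_1^{\alpha})$ under $(D^{\alpha})_{\cris} \hookrightarrow D^-_{\cris}$, and analogously for $\beta$.

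Combining (a) and (b) gives the proposition: the hypothesis $L_p(\invs{F}_{k_1}, g, 1 + \eta) \neq 0$, together with the non-vanishing of the Euler and $c$-smoothing factors, yields that both $\opn{ev}_{\eta}\invs{L}(z_1^{\alpha})$ and $\opn{ev}_{\eta}\invs{L}(z_1^{\beta})$ are non-zero, whence Corollary \ref{corlinearindependence} supplies the required linear independence. The main obstacle is in fact the explicit reciprocity computation in (b): the versions proved in \cite{lz-coleman} are formulated at classical specialisations, and one has to check that all ingredients (the big logarithm, the Beilinson--Flach class, and the $p$-adic $L$-function) are analytic in the weight $k_1$ so that the reciprocity law passes to arbitrary $k_1 \in V_1$. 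The factorisation in (a) is an essentially formal consequence of the LZ triangulation result, and the $c$-dependence, while notationally unpleasant, is routine.
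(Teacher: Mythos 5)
Your proposal is correct and follows essentially the same route as the paper: reduce to Corollary \ref{corlinearindependence} by using the Loeffler--Zerbes triangulation results to place $z_1^{\alpha}$ and $z_1^{\beta}$ in the images of $\opn{H}^1_{\Iw}(\mbb{Q}_p, D^{\alpha})$ and $\opn{H}^1_{\Iw}(\mbb{Q}_p, D^{\beta})$, and then use the explicit reciprocity law of \cite[Theorem 7.1.5]{lz-coleman} together with a choice of $c$ making the smoothing factor non-zero. The only differences are cosmetic: the paper settles your ``analyticity in $k_1$'' concern via Proposition \ref{propertiesofL}(1) (the specialised Perrin--Riou map agrees with the family map of \cite[Theorem 7.1.4]{lz-coleman}, and the reciprocity law is already an identity over the family), and in the form of the reciprocity law quoted there the quantity is exactly the $c$-factor times $(-1)^{1+\eta}\lambda_N(\invs{F}_{k_1})^{-1}L_p(\invs{F}_{k_1}, g, 1+\eta)$, with no additional Euler factor, so (NLZ) is not invoked at this step --- only $k_1 \neq k'$ is used to choose $c$.
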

\begin{proof}
Recall that $D^-$ is a crystalline $(\varphi, \Gamma)$-module satisfying hypothesis (H). Therefore we have the Perrin-Riou logarithm
\[
\invs{L}\colon \opn{H}^1_{\Iw}(\mbb{Q}_{p}, D^-) \rightarrow D^-_{\mathrm{cris}} \hatot \Lambda_{\infty}.
\]
By part (1) in Proposition \ref{propertiesofL}, this map agrees with the map (also denoted by $\invs{L}$) constructed in \cite[Theorem 7.1.4]{lz-coleman} after specialising at $(k_1, k')$. Indeed the map in \emph{loc. cit.} is defined as the pull back of $\invs{L}_{D^-(-1-k')}$ under the automorphism $\omega_{-1-k'}$.

By the ``explicit reciprocity law'' of Theorem 7.1.5 in \emph{op. cit.} we see that $\mathrm{ev}_{\eta}(\invs{L}(z_1^{\alpha}))$ and $\mathrm{ev}_{\eta}(\invs{L}(z_1^{\beta}))$ are both non-zero if the quantity
\begin{equation} \label{ExplicitRecFactor}
\left( c^2 - c^{-(k_1 + k' - 2\eta)} \varepsilon_{\invs{F}_{k_1}}(c)^{-1}\varepsilon_g(c)^{-1} \right) (-1)^{1+ \eta} \lambda_N(\invs{F}_{k_1} )^{-1} L_p(\invs{F}_{k_1}, g, 1+ \eta )
\end{equation}
is non-zero, where $\lambda_N(\invs{F}_{k_1} )$ denotes the specialisation of the Atkin--Lehner pseudo-eigenvalue of $\invs{F}$ (see \cite[\S 2.5]{KLZ2015} for the definition of the Atkin--Lehner operators). Since our assumption at the start of \S \ref{ApplicationTo} implies that $k_1 \neq k'$, we can choose the integer $c$ such that the first factor in (\ref{ExplicitRecFactor}) is non-zero. The result then follows by applying Corollary \ref{corlinearindependence}.
\end{proof}


\section{Bounding the Selmer group}
Let $f$ and $g$ be two cuspidal new eigenforms satisfying the assumptions in section \ref{SummaryOfResults}, and let $\invs{F}$ be a Coleman family over $V_1 \subset \invs{W}_E$ passing through a $p$-stabilisation of $f$. In this section we show that, if $V_1$ is taken to be small enough and certain hypotheses are satisfied, then the cohomology group $\widetilde{\opn{H}}^2_f(\mbb{Q}, \overline{M}_{\mathbf{x}})$ vanishes if $L_p(\mathbf{x}) = 0$. Here $\mathbf{x} = (k_1, k', \eta) \in V_1 \times \{k'\} \times \invs{W}$ is a tuple of weights, $\overline{M}_{\mathbf{x}}$ denotes the representation $[M(\invs{F}_{k_1})^* \otimes M(g)^*](\eta^{-1})$ and $\widetilde{\opn{H}}^2_f(\mbb{Q}, \overline{M}_{\mathbf{x}})$ is the second cohomology group of the Selmer complex defined in section \ref{ConvolutionOfTwo} below.

\subsection{Cohomological preliminaries} \label{CohomologicalPreliminaries}
In \cite{nekovar-selmer}, Nekov\'{a}\v{r} defined the concept of a Selmer complex - an object in a certain derived category whose cohomology is closely related to the usual definition of Selmer groups. This construction is useful because the resulting complex has nice base-change and duality properties; attributes that one doesn't necessarily have for the classical Selmer groups. In \cite{pottharst-selmer}, Pottharst extends this construction to families of Galois representations over well-behaved rigid analytic spaces. This is the tool we will use to construct a sheaf interpolating the Bloch--Kato Selmer groups. We now summarise this construction.  

Let $A$ be an affinoid algebra over $\mbb{Q}_p$ and let $M$ be an $A$-valued representation of $G_{\mbb{Q}} = \Gal(\bar{\mbb{Q}}/\mbb{Q})$ (i.e. a finitely generated, projective $A$-module with a continuous action of $G_{\mbb{Q}}$). Let $\Sigma$ be a set of places of $\mbb{Q}$ containing $p, \infty$ and all primes where $M$ is ramified, and assume that $\Sigma$ is finite. Let $G_{\Sigma}$ denote the Galois group of the maximal algebraic unramified-outside-$\Sigma$ extension of $\mbb{Q}$ and, for a place $v \in \Sigma$, let $G_v$ denote a fixed decomposition group in $G_{\Sigma}$ associated to the place $v$. 

\begin{definition}
A collection of local conditions $\Delta$ for $M$ is a set of pairs $\{ (\Delta_v, \iota_v): v \in \Sigma \}$ where $\Delta_v$ is an object in the derived category of bounded complexes of (continuous) $A$-modules, and $\iota_v$ is a morphism
\[
\Delta_v \xrightarrow{\iota_v} \rcont{G_v}{M}.
\]
\end{definition}

One defines the Selmer complex $\selcomo{M}$ in the following way.

\begin{definition}
Let $\Delta$ be a set of local conditions for $M$. Then the Selmer complex $\selcomo{M}$ is the mapping fibre
\[
\selcomo{M} := \opn{Cone}\left( \rcont{G_{\Sigma}}{M} \oplus \bigoplus_{v \in \Sigma}{\Delta_v} \xrightarrow{\opn{res}_v - \iota_v} \bigoplus_{v \in \Sigma} \rcont{G_v}{M} \right)[-1].
\]
We denote the $i$-th cohomology of this complex by $\esel{i}{M}$.

If $\Delta_v$ is quasi-isomorphic to a complex of finitely generated $A$-modules concentrates in degrees $[0, 2]$ (all our local conditions in this paper will satisfy this), then $\selcomo{M}$ is quasi-isomorphic to a complex of finitely generated $A$-modules, concentrated in degrees $[0, 3]$ (see \cite[\S 1.5]{pottharst-selmer}).
\end{definition}

This construction also works in more general situations. For example, if $X$ is a quasi-Stein rigid analytic space then $\selcomo{M}$ is quasi-isomorphic to a complex of coherent $\ordd_X$-modules, concentrated in degrees $[0, 3]$ (this is situation (4) described in \cite[\S 1.5]{pottharst-selmer}).

\begin{proposition} \label{PropertiesOfSelmerComplexes}
Selmer complexes satisfy the following properties:
\begin{enumerate}
    \item (Duality, \cite[Theorem 1.16]{pottharst-selmer}) Suppose that the local condition $\Delta_v$ is quasi-isomorphic to a perfect complex of $A$-modules concentrated in degrees $[0, 2]$, for all $v \in \Sigma$. We define the dual local conditions $\Delta^*(1)$ to be $\{(\Delta_v^*(1), j_v^*[-2]) \}$, where 
    \[
    \Delta_v^*(1) := Q_v^*[-2] \xrightarrow{j_v^*[-2]} \rcont{G_v}{M}^*[-2] \cong \rcont{G_v}{M^*(1)}
    \]
    and $Q_v$ is the mapping cone of $\iota_v$, we write $j_v: \rcont{G_v}{M} \to Q_v$ for the natural map and $(-)^*$ denotes the dual (in the underived sense). 
    
    One has an isomorphism 
    \[
    \selcom{G_{\Sigma}}{M^*(1)}{\Delta^*(1)} \cong \selcomo{M}^*[-3].
    \]
    \item (Comparison of local conditions) If $\Delta' = \{(\Delta'_v, \iota'_v): v \in \Sigma\}$ is another set of local conditions and $\{\tau_v\}$ are morphisms such that $\iota'_v$ is equal to the composition
\[
\Delta'_v \xrightarrow{\tau_v} \Delta_v \xrightarrow{\iota_v} \rcont{G_v}{M}
\]
then we obtain a Poitou--Tate style long exact sequence
\[
\begin{aligned}
\bigoplus_{v \in \Sigma}\opn{H}^0(Q_v) \rightarrow \eseldel{1}{M}{\Delta'} \rightarrow \eseldel{1}{M}{\Delta} \xrightarrow{\xi} \bigoplus_{v \in \Sigma}\opn{H}^1(Q_v) \rightarrow \\
\rightarrow \eseldel{2}{M}{\Delta'} \rightarrow \eseldel{2}{M}{\Delta} \rightarrow \bigoplus_{v \in \Sigma}\opn{H}^2(Q_v)
\end{aligned}
\]
where $Q_v$ is the mapping cone of $\tau_v$.
\end{enumerate}
\end{proposition}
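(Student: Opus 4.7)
The plan is to handle the two parts of Proposition \ref{PropertiesOfSelmerComplexes} in sequence, since they rest on quite different inputs. Part (1) will be invoked directly from \cite[Theorem~1.16]{pottharst-selmer}: under the perfectness hypothesis on each $\Delta_v$, one can form the formal dual of $\selcomo{M}$ in the appropriate derived category of $A$-modules, and combining global Poitou--Tate duality for $\rcont{G_\Sigma}{M}$ with the local Tate duality $\rcont{G_v}{M^*(1)} \cong \rcont{G_v}{M}^*[-2]$ identifies this dual, up to the expected shift by $3$, with the Selmer complex of $M^*(1)$ equipped with the dual local conditions. I will not reprove this; only record it.

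For part (2) the proof is formal and proceeds by the mapping-fibre calculus. The hypothesis $\iota'_v = \iota_v \circ \tau_v$ means precisely that the triple
\[
\bigl( \opn{id}_{\rcont{G_\Sigma}{M}},\; \bigoplus_{v} \tau_v,\; \opn{id}_{\bigoplus_v \rcont{G_v}{M}} \bigr)
\]
defines a morphism between the two diagrams whose mapping fibres are $\selcom{G_\Sigma}{M}{\Delta'}$ and $\selcomo{M}$, and hence induces a canonical morphism of Selmer complexes $\selcom{G_\Sigma}{M}{\Delta'} \to \selcomo{M}$. Applying the nine lemma (equivalently, the octahedral axiom) in the derived category, the cone of this morphism can be computed column-by-column: on the $\rcont{G_\Sigma}{M}$ and $\bigoplus_v \rcont{G_v}{M}$ columns the cone vanishes, while on $\bigoplus_v \Delta'_v \to \bigoplus_v \Delta_v$ it is $\bigoplus_v Q_v$ by definition of the $Q_v$. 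Therefore one obtains a distinguished triangle
\[
\selcom{G_\Sigma}{M}{\Delta'} \longrightarrow \selcomo{M} \longrightarrow \bigoplus_{v \in \Sigma} Q_v \xrightarrow{+1}
\]
and the stated Poitou--Tate style long exact sequence is just the associated cohomology long exact sequence, with $\xi$ arising as the connecting homomorphism.

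There is no substantive obstacle; the whole argument is a routine manipulation in a triangulated category. The only points requiring care are bookkeeping the shift introduced by the $[-1]$ in the mapping-fibre convention (which is already absorbed into the degree indexing of $\eseldel{i}{M}{\Delta}$) and verifying that $\xi$ is the natural restriction-minus-local-condition map; both checks can be made at the level of explicit cochain representatives.
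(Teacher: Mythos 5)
Your proposal is correct and matches the paper's (very terse) treatment: part (1) is simply quoted from \cite[Theorem 1.16]{pottharst-selmer}, and part (2) is exactly the ``immediate from the definition'' cone computation you spell out, namely that the morphism of mapping fibres induced by $(\opn{id}, \oplus_v \tau_v, \opn{id})$ has cone $\bigoplus_v Q_v$, whose long exact sequence is the stated one. The only cosmetic caveat is that $\xi$ is the map induced by $\selcomo{M} \to \bigoplus_v Q_v$ (equivalently the connecting map of the rotated triangle), which indeed unwinds to ``restrict, then project'' on cochain representatives as you note.
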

\begin{proof}
For the second part, this is immediate from the definition of the Selmer complexes associated to the local conditions $\Delta$ and $\Delta'$.
\end{proof}

In \S \ref{AVanishingResult} we will use the Poitou--Tate long exact sequence described above to show that the cohomology of the Selmer complex vanishes in degree two if the corresponding value of the $p$-adic $L$-function is non-zero. In particular we will use the following result:

\begin{proposition} \label{PropositionPT}
If the map $\xi$ in part (2) of Proposition \ref{PropertiesOfSelmerComplexes} is surjective then we have an injective map
\[
\eseldel{2}{M}{\Delta'} \hookrightarrow \eseldel{2}{M}{\Delta}.
\]
In particular, if $\eseldel{2}{M}{\Delta}$ vanishes then so does $\eseldel{2}{M}{\Delta'}$. 
\end{proposition}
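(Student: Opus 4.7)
The proof is essentially an immediate diagram chase on the long exact sequence provided by part (2) of Proposition \ref{PropertiesOfSelmerComplexes}, so the plan is short.

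The plan is to read off the conclusion directly from the Poitou--Tate style long exact sequence
\[
\eseldel{1}{M}{\Delta} \xrightarrow{\xi} \bigoplus_{v \in \Sigma} \opn{H}^1(Q_v) \xrightarrow{\delta} \eseldel{2}{M}{\Delta'} \xrightarrow{\alpha} \eseldel{2}{M}{\Delta} \longrightarrow \bigoplus_{v \in \Sigma} \opn{H}^2(Q_v)
\]
of Proposition \ref{PropertiesOfSelmerComplexes}(2). First I would invoke exactness at $\bigoplus_v \opn{H}^1(Q_v)$: this says $\operatorname{im}(\xi) = \ker(\delta)$. By hypothesis $\xi$ is surjective, so $\ker(\delta) = \bigoplus_v \opn{H}^1(Q_v)$, hence $\delta$ is the zero map. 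Next, exactness at $\eseldel{2}{M}{\Delta'}$ gives $\ker(\alpha) = \operatorname{im}(\delta) = 0$, which is precisely the injectivity claim
\[
\eseldel{2}{M}{\Delta'} \hookrightarrow \eseldel{2}{M}{\Delta}.
\]

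For the ``in particular'' part, if $\eseldel{2}{M}{\Delta} = 0$ then the injective map $\alpha$ forces $\eseldel{2}{M}{\Delta'} = 0$ as well.

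There is no real obstacle here: the entire content is packaged into the long exact sequence, which is itself obtained formally from the octahedral axiom applied to the comparison of the defining mapping cones of the two Selmer complexes. In the eventual application (see \S\ref{AVanishingResult}), the substance will lie in verifying the surjectivity hypothesis on $\xi$, not in the present bookkeeping step.
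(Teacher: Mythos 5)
Your proof is correct and matches the paper's (implicit) argument: the proposition is read off directly from the Poitou--Tate style long exact sequence of Proposition \ref{PropertiesOfSelmerComplexes}(2), where surjectivity of $\xi$ kills the connecting map into $\eseldel{2}{M}{\Delta'}$ and exactness then gives injectivity of $\eseldel{2}{M}{\Delta'} \to \eseldel{2}{M}{\Delta}$. Nothing further is needed.
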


\subsection{Convolution of two Coleman families} \label{ConvolutionOfTwo}

Let $\mathbf{x} = (k_1, k', \eta) \in V_1 \times \{k'\} \times \invs{W}$ be a tuple of weights defined over a finite extension $E / \mbb{Q}_p$.

Let $\overline{M}_{\mathbf{x}}$ denote the representation $[M(\invs{F}_{k_1})^* \otimes M(g)^*](\eta^{-1})$ and let $\Sigma$ be a finite set of places of $\mbb{Q}$ that contains $p, \infty$ and all the primes where $\overline{M}_{\mathbf{x}}$ ramifies. Let $D_{\mathbf{x}} \defeq \ddrig{\overline{M}_{\mathbf{x}}}$ denote the $(\varphi, \Gamma)$-module associated to $\overline{M}_{\mathbf{x}}$ and recall that we have a two dimensional quotient $\ddrig{\overline{M}_{\mathbf{x}}} \rightarrow D^-(\eta^{-1}) =: D^-_{\mathbf{x}}$, where $D^-$ is defined as in (\ref{defofD}). We denote the kernel of this quotient by $D^+_{\mathbf{x}}$. For $v \in \Sigma \backslash \{p\}$ we call $(\rcont{G_v/I_v}{{\overline{M}_{\mathbf{x}}}^{I_v}}, \iota_v)$ the \emph{unramified} local condition at $v$, where $\iota_v$ is the natural map induced by inflation. We are interested in the following examples of local conditions:
\begin{itemize}
    \item \emph{(Relaxed)} For $v \in \Sigma$ take $\Delta_{\mathrm{rel}}$ to be the set of unramified local conditions for $v \neq p$ and
    \[
    \Delta_{\mathrm{rel}, p} := \rcont{G_p}{\overline{M}_{\mathbf{x}}} \xrightarrow{\sim} \rcont{G_p}{\overline{M}_{\mathbf{x}}}.
    \]
    We denote the cohomology of the associated Selmer complex by $\selmergrprel{i}{\overline{M}_{\mathbf{x}}}$.
    \item \emph{(Strict)} For $v \in \Sigma$ take $\Delta_{\mathrm{str}}$ to be the set of unramified local conditions for $v \neq p$ and
    \[
    \Delta_{\mathrm{str}, p} := 0 \rightarrow \rcont{G_p}{\overline{M}_{\mathbf{x}}}.
    \]
    We denote the cohomology of the associated Selmer complex by $\selmergrpstr{i}{\overline{M}_{\mathbf{x}}}$.
    \item \emph{(Panchishkin)} For $v \in \Sigma$ take $\Delta_{f}$ to be the set of unramified local conditions for $v \neq p$ and
    \[
    \Delta_{f, p} := \rcont{G_p}{D^+_{\mathbf{x}}} \rightarrow \rcont{\mbb{Q}_p}{\ddrig{\overline{M}_{\mathbf{x}}}} \cong \rcont{G_p}{\overline{M}_{\mathbf{x}}}.
    \]
    We denote the cohomology of the associated Selmer complex by $\selmergrp{i}{\overline{M}_{\mathbf{x}}}$. The reason for choosing this local condition is because it is closely related to the Bloch--Kato local condition when $\mathbf{x}$ lies in the \emph{critical range} (the range where the $p$-adic $L$-function interpolates critical values of the global $L$-function). We will discuss this relation in \S \ref{RelationBKSelmer}.
\end{itemize}

\begin{remark}
\begin{enumerate} 
\item All three of the above Selmer complexes do not change if we enlarge the set $\Sigma$, so we suppress this auxiliary set from the notation.
\item The relaxed and strict conditions are dual to each other. The dual of the Panchishkin local condition is a Panchishkin local condition for $\ddrig{\overline{M}_{\mathbf{x}}}^*(1) = \ddrig{{\overline{M}_{\mathbf{x}}}^*(1)}$.
\end{enumerate}
\end{remark}

Let $c > 6$ be an integer prime to $6Np$ and recall from \S \ref{Interpolation} that, for an integer $m$ coprime to $6Ncp$, there is a Beilinson--Flach class 
\[
_c\invs{BF}^{[\invs{F}, \invs{G}]}_{m, 1} \in \opn{H}^1(\mbb{Q}(\mu_m), D^{\mathrm{la}}(\Gamma, M) )
\]
where $M = M_{V_1}(\invs{F})^* \hatot M_{V_2}(\invs{G})^*$. By specialising these Beilinson--Flach classes at $\mathbf{x}$ and identifying $M((\invs{G}_{\alpha})_{k'})$ and $M((\invs{G}_{\beta})_{k'})$ with $M_E(g)$ as before, we obtain classes in $\opn{H}^1(\mbb{Q}(\mu_m), \overline{M}_{\mathbf{x}} )$.

In section \ref{ESinFamilies} we showed that these classes satisfy certain norm relations and that we could produce an Euler system from these classes. More precisely, let $\overline{T}_{\mathbf{x}}$ be a Galois stable lattice inside $\overline{M}_{\mathbf{x}}$. Then there exist collections $\{c^{\alpha}_m \in \opn{H}^1(\mbb{Q}(\mu_m), \overline{T}_{\mathbf{x}}) : m \in S \}$ and $\{c^{\beta}_m \in \opn{H}^1(\mbb{Q}(\mu_m), \overline{T}_{\mathbf{x}}) : m \in S \}$ satisfying the Euler system relations, and $c_1^{\alpha}$ and $c_1^{\beta}$ are equal to non-zero multiples of the specialisations of ${\beilfd{1}{\invs{G}_{\alpha}}}$ and ${\beilfd{1}{\invs{G}_{\beta}}}$ at $\mathbf{x}$ respectively. 

We choose the integer $c$ in such a way that the conclusion of Proposition \ref{proplinearindependence} holds at the point $\mathbf{x}$ (this choice may depend on $\mathbf{x}$). After making this choice, the classes $c_{m}^{\alpha}, c_{m}^{\beta}$ satisfy the following local conditions. 

\begin{proposition} \label{PropositionUnramified}
Keeping the same notation as above, the classes $c_{m}^{\alpha}, c_{m}^{\beta}$ satisfy the following properties:
\begin{enumerate}
\item Both $c_{m}^{\alpha}$ and $c_{m}^{\beta}$ are unramified outside $p$. In particular, both $c_1^{\alpha}$ and $c_1^{\beta}$ lie in $\selmergrprel{1}{\overline{M}_{\mathbf{x}}}$. This implies that the collection $\{c_m : m \in S \}$ forms an Euler system in the sense of Definition 2.1.1 in \cite{Rubin}, with condition (ii) replaced by (ii)'(b) (see \S 9.1 in \emph{op.~cit.}).
\item Let $\bar{c}_{1}^{\alpha}, \bar{c}_{1}^{\beta}$ denote the images of $c_{1}^{\alpha}, c_{1}^{\beta}$ under the map
\[
\selmergrprel{1}{\overline{M}_{\mathbf{x}}} \xrightarrow{\xi} \opn{H}^1(\mbb{Q}_p, D^-_{\mathbf{x}})
\] 
where $\xi$ is given by first restricting to $p$ and then mapping to the quotient (this map is the same map as in Proposition \ref{PropositionPT} if we compare the relaxed and Panchishkin local conditions defined above). Then, if $\bar{c}_{1}^{\alpha}$ and  $\bar{c}_{1}^{\beta}$ are both non-zero, they are linearly independent. In particular this happens when $L_p(\invs{F}_{k_1}, g, 1 +\eta) \neq 0$. 
\end{enumerate}
\end{proposition}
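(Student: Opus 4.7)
The plan is to prove the two assertions of the proposition in turn. For part (1), the unramified property at primes $\ell \nmid Np$ is automatic, since $\overline{M}_{\mathbf{x}}$ is itself unramified at such $\ell$, so any class in $\opn{H}^1(\mbb{Q}_\ell(\mu_m), \overline{T}_{\mathbf{x}})$ lies in the unramified subspace. At primes $\ell$ dividing $N$ with $\ell \ne p$, I would follow the approach of \cite[\S 8.1]{lz-coleman}. The flatness-of-inertia hypothesis (2) from \S \ref{SummaryOfResults} ensures that $\overline{M}_{\mathbf{x}}^{I_\ell}$ is a direct summand of $\overline{M}_{\mathbf{x}}$, while the minimally-ramified hypothesis (3) rules out the local class picking up any contribution outside this subspace; together these imply that the specialisation of $\beilfd{m}{\invs{G}_\alpha}$ (resp.\ $\beilfd{m}{\invs{G}_\beta}$) at $\mathbf{x}$ lies in the unramified cohomology at $\ell$. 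Since $c_m^{\alpha}, c_m^{\beta}$ differ from these specialisations only by a nonzero scalar and by an element of $\ordd(V_1 \times V_2 \times \invs{W})^{\circ}[(\mbb{Z}/m\mbb{Z})^{\times}]$, this property is inherited. The Euler-system axiom (ii)$'$(b) of \cite[\S 9.1]{Rubin} is then a direct consequence of the norm relations in Proposition \ref{ExistenceOfESClasses} and its corollary.

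For part (2), I would first observe that the map $\xi$ is, by construction, the composition of restriction to a decomposition group at $p$ with projection onto the $(\varphi, \Gamma)$-module quotient $D^-_{\mathbf{x}}$ coming from the triangulation of $\ddrig{\overline{M}_{\mathbf{x}}}$. By the Corollary in \S \ref{ESinFamilies}, $c_1^{\alpha}$ (resp.\ $c_1^{\beta}$) is a nonzero scalar multiple of the specialisation at $\mathbf{x}$ of $\beilfd{1}{\invs{G}_\alpha}$ (resp.\ $\beilfd{1}{\invs{G}_\beta}$), so its image $\bar{c}_1^{\alpha}$ (resp.\ $\bar{c}_1^{\beta}$) under $\xi$ agrees, up to a nonzero constant, with $\mathrm{pr}_{\eta}(z_1^{\alpha})$ (resp.\ $\mathrm{pr}_{\eta}(z_1^{\beta})$) as defined in \S \ref{ApplicationTo}. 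Proposition \ref{proplinearindependence} then immediately yields linear independence of $\bar{c}_1^{\alpha}, \bar{c}_1^{\beta}$ whenever $L_p(\invs{F}_{k_1}, g, 1+\eta) \ne 0$, provided the auxiliary integer $c$ has been chosen so that the Euler-factor in (\ref{ExplicitRecFactor}) is nonzero at $\mathbf{x}$; since $V_1$ has been chosen so that $k_1 \ne k'$, such a $c$ always exists.

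The main obstacle is verifying the unramified property in part (1) at primes dividing $N$, since this is the step where the geometric hypotheses on $f$ and $g$ genuinely enter the argument. The remaining content of the proposition is essentially formal: the Euler-system property is a direct repackaging of the norm relations from Section \ref{ESinFamilies}, and part (2) reduces, after unwinding definitions, to a single application of Proposition \ref{proplinearindependence}.
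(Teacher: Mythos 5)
Your treatment of part (2) is correct and is essentially the paper's own argument: one identifies $\bar{c}_1^{\alpha}$ and $\bar{c}_1^{\beta}$ with non-zero multiples of $\mathrm{pr}_{\eta}(z_1^{\alpha})$ and $\mathrm{pr}_{\eta}(z_1^{\beta})$, and the claim follows from Proposition \ref{proplinearindependence} (itself resting on Corollary \ref{corlinearindependence}), with the auxiliary integer $c$ chosen so that the elementary factor in (\ref{ExplicitRecFactor}) is non-zero, which is possible since $k_1 \neq k'$.

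Part (1), however, contains a genuine gap. First, unramifiedness at primes $\ell \nmid Np$ is \emph{not} automatic from the fact that $\overline{M}_{\mathbf{x}}$ is unramified at $\ell$: for $\ell \neq p$ the singular quotient of $\opn{H}^1(\mbb{Q}_\ell(\mu_m), \overline{T}_{\mathbf{x}})$, i.e.\ the image in $\opn{H}^1(I_\ell, \overline{T}_{\mathbf{x}})$, need not vanish (already for the trivial one-dimensional module one has $\opn{H}^1(I_\ell, \mbb{Z}_p) \neq 0$), so a global class can perfectly well be ramified at such $\ell$ --- and note that the proposition asserts unramifiedness even at primes $\ell \mid m$, which is a genuinely nontrivial statement about these particular classes. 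Second, the flatness-of-inertia and minimally-ramified hypotheses cannot do the work you assign to them at $\ell \mid N$: they are statements about the module $T_n^{I_v}$ varying in the family (flatness, and constancy of the dimension of inertia invariants modulo $p$), and they are used later for the base-change properties of the Selmer sheaf (Lemma \ref{KeyLemma}, Theorem \ref{SheafTheorem}); they place no constraint on where the restriction to $G_{\mbb{Q}_\ell}$ of a given global class lands, so they cannot force the specialised Beilinson--Flach class into the unramified subspace. The paper proves part (1) by the argument of \cite[Theorem 8.1.4]{lz-coleman}, which uses the geometric construction of the classes together with an Iwasawa-theoretic argument at $\ell \neq p$ for the cyclotomic deformation $D^{\mathrm{la}}(\Gamma, M)$, showing that specialisations of the three-variable class ${\beilfd{m}{\invs{G}}}$ are automatically unramified away from $p$; no appeal to hypotheses (2)--(3) of \S \ref{SummaryOfResults} is made or needed there. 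Your reduction of the Euler-system axiom to Proposition \ref{ExistenceOfESClasses} and its corollary is fine once this unramifiedness is actually established.
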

\begin{proof}
The first part is the same proof as in \cite[Theorem 8.1.4]{lz-coleman}. For the second part, note that $\bar{c}_{1}^{\alpha}$ and  $\bar{c}_{1}^{\beta}$ are two elements satisfying the conditions of Corollary \ref{corlinearindependence}. The result then follows from Proposition \ref{proplinearindependence}.
\end{proof}

\subsection{A vanishing result} \label{AVanishingResult}

Let $\invs{F}$ be a Coleman family over an affinoid domain $V$. For ease of notation we set $\alpha_{\invs{F}} \defeq a_p(\invs{F})$ and similarly for specialisations of $\invs{F}$. Moreover, recall that if the specialisation of $\invs{F}$ at $k_1$ is a noble eigenform (so it is the $p$-stabilisation of an eigenform $h$) then $\alpha_{\invs{F}_{k_1}} = \alpha_h$ and $\beta_h$ denote the roots of the Hecke polynomial at $p$ associated to $h$, and satisfy $\alpha_h \beta_h = p^{k_1 + 1}\varepsilon_h(p)$. In this case, we will also write $\beta_{\invs{F}_{k_1}} \defeq \beta_h$ (although the notation $\beta_{\invs{F}}$ is of course meaningless).

The goal of this section is to show that if the $p$-adic $L$-function doesn't vanish at $\mathbf{x}$ then the Selmer group $\selmergrp{2}{\overline{M}_{\mathbf{x}}}$ is trivial. The strategy is to combine Propositions \ref{PropositionPT} and \ref{PropositionUnramified} by comparing the Panchishkin and relaxed local conditions. In particular, we must show that the hypothesis in Proposition \ref{PropositionPT} is satisfied. Unfortunately this is not true in general and fails when $\overline{M}_{\mathbf{x}}$ has a ``local zero'', i.e. the local Euler factor of $\overline{M}_{\mathbf{x}}$ at $p$ vanishes at $s = 1$. Therefore we impose the following hypothesis on $\overline{M}_{\mathbf{x}}$: 

\begin{itemize}
\item[(NLZ)] None of the products
\[
\{ \alpha_{\invs{F}_{k_1}} \alpha_{g}, \; \;  \alpha_{\invs{F}_{k_1}} \beta_{g}, \; \; \alpha_{\invs{F}_{k_1}}^{-1} \varepsilon_{\invs{F}_{k_1}}(p) \alpha_{g}, \; \; \alpha_{\invs{F}_{k_1}}^{-1} \varepsilon_{\invs{F}_{k_1}}(p) \beta_{g} \}
\]
are equal to $p^j$ for some integer $j$ (recall that $\mathbf{x} = (k_1, k', \eta)$). 
\end{itemize}

\begin{remark}
The (NLZ) hypothesis is an open condition, i.e. if it holds at the point $\mathbf{x}$ then it also holds for all specialisations in an open neighbourhood of $\mathbf{x}$. In particular, if $\invs{F}$ is a Coleman family passing through a $p$-stabilisation of $f$ defined over an affinoid subdomain $V_1 \subset \invs{W}_E$, and if the (NLZ) hypothesis holds for $f$ and $g$, i.e. none of the products 
\[
\{ \alpha_f \alpha_g, \alpha_f \beta_g, \beta_f \alpha_g, \beta_f \beta_g \}
\]
are equal to a power of $p$, then we can shrink $V_1$ so that the (NLZ) hypothesis holds for all specialisations of $\overline{M}$ at $\mathbf{x} = (k_1, k', \eta) \in V_1 \times \{k'\} \times \WW$.
\end{remark}

The second ingredient to proving the vanishing result is to apply the ``Euler system machine'' to the representation $\overline{M}_{\mathbf{x}}$. To be able to apply this we need to assume the following ``Big Image'' hypothesis.
\begin{itemize}
\item[(BI)] There exists an element $\sigma \in \Gal(\bar{\mbb{Q}}/\mbb{Q}(\mu_{p^{\infty}}))$ such that $\overline{M}_{\mathbf{x}}/(\sigma - 1)\overline{M}_{\mathbf{x}}$ is one-dimensional (over $E$).
\end{itemize} 

\begin{remark}
It turns out that for the ``Big Image'' hypothesis to hold we only need to assume that the image of the mod $p$ representation of $\overline{M}_{\mathbf{x}}$ is sufficiently large, and this is almost always the case provided that $\FF_{k_1}$ and $g$ are not of CM type and $\FF_{k_1}$ is not Galois conjugate to a twist of $g$. In particular, since the mod $p$ representation of a Coleman family is locally constant, this implies that the ``Big Image'' hypothesis will hold in an open neighbourhood of the point $\mathbf{x}$. We provide justifications for this in the appendix (\S \ref{Appendix}). 
\end{remark}

Under these two assumptions we have the following vanishing result.

\begin{theorem} \label{TheoremVanishing}
Keeping the same notation at the start of section \ref{ConvolutionOfTwo}, assume that the (NLZ) and (BI) hypotheses hold. If $L_p(\invs{F}_{k_1}, g, 1+\eta) \neq 0$ then $\selmergrp{2}{\overline{M}_{\mathbf{x}}} = 0$.
\end{theorem}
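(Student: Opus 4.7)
The plan is to deduce the desired vanishing in two steps: first, compare the Panchishkin Selmer complex with the relaxed Selmer complex via the Poitou--Tate style exact sequence of Proposition \ref{PropertiesOfSelmerComplexes}(2) to reduce to a vanishing for the relaxed Selmer group; and second, apply Rubin's Euler system machine to the Beilinson--Flach classes, combined with the global duality of Proposition \ref{PropertiesOfSelmerComplexes}(1), to deliver that vanishing. Non-vanishing of $L_p$ enters precisely once, via the linear independence result of Proposition \ref{proplinearindependence}.

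For the first step, the morphism of local conditions $\Delta_f \to \Delta_{\mathrm{rel}}$ is the identity at $v \neq p$ and is the natural map $\rcont{G_p}{D^+_{\mathbf{x}}} \to \rcont{G_p}{\overline{M}_{\mathbf{x}}}$ at $v=p$, whose cone is $\rcont{G_p}{D^-_{\mathbf{x}}}$ by the triangulation of $\ddrig{\overline{M}_{\mathbf{x}}}$. The long exact sequence of Proposition \ref{PropertiesOfSelmerComplexes}(2) therefore contains
\[
\selmergrprel{1}{\overline{M}_{\mathbf{x}}} \xrightarrow{\xi} \opn{H}^1(\mbb{Q}_p, D^-_{\mathbf{x}}) \longrightarrow \selmergrp{2}{\overline{M}_{\mathbf{x}}} \longrightarrow \selmergrprel{2}{\overline{M}_{\mathbf{x}}},
\]
so it suffices to show that $\xi$ is surjective and that $\selmergrprel{2}{\overline{M}_{\mathbf{x}}}=0$. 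To see that $\xi$ is surjective, first compute $\dim_E \opn{H}^1(\mbb{Q}_p, D^-_{\mathbf{x}}) = 2$: the Euler--Poincar\'e formula (Proposition \ref{EulerCharFormula}) gives $\chi(D^-_{\mathbf{x}}) = -2$, while the (NLZ) hypothesis rules out exactly the $\varphi$-eigenvalues on $\Cris{D^-_{\mathbf{x}}}$ that could produce nonzero $\opn{H}^0$ or, via local duality for $(\varphi,\Gamma)$-modules, nonzero $\opn{H}^2$. The two Beilinson--Flach classes $c_1^{\alpha}, c_1^{\beta}$ lie in $\selmergrprel{1}{\overline{M}_{\mathbf{x}}}$ by Proposition \ref{PropositionUnramified}(1), and their images under $\xi$ are linearly independent in $\opn{H}^1(\mbb{Q}_p, D^-_{\mathbf{x}})$ by Proposition \ref{PropositionUnramified}(2) (using the assumption $L_p(\invs{F}_{k_1}, g, 1+\eta) \neq 0$ via Proposition \ref{proplinearindependence}), so they span the two-dimensional target.

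For the vanishing of $\selmergrprel{2}{\overline{M}_{\mathbf{x}}}$, the duality isomorphism of Proposition \ref{PropertiesOfSelmerComplexes}(1) identifies this with the $E$-dual of $\selmergrpstr{1}{\overline{M}_{\mathbf{x}}^*(1)}$, since the dual of the unramified condition at $v \neq p$ is itself and the dual of the unrestricted condition at $p$ is the strict condition. Apply Rubin's Euler system machine to the Euler system $\{c_m^{\alpha}\}_{m \in S}$ constructed in the corollary at the end of section \ref{ESinFamilies}: the classes take values in a Galois-stable lattice and are unramified away from $p$ by Proposition \ref{PropositionUnramified}(1); the (BI) hypothesis supplies the required big-image condition; and the bottom class $c_1^{\alpha}$ is nonzero because $\xi(c_1^{\alpha}) \neq 0$ by the previous paragraph. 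Rubin's theorem then delivers $\selmergrpstr{1}{\overline{M}_{\mathbf{x}}^*(1)} = 0$, hence $\selmergrprel{2}{\overline{M}_{\mathbf{x}}} = 0$, and the combination of the two steps gives $\selmergrp{2}{\overline{M}_{\mathbf{x}}} = 0$.

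The main obstacle is making Rubin's Euler system machine apply cleanly at a point $\mathbf{x}$ which need not be classical: the representation $\overline{M}_{\mathbf{x}}$ is $p$-adic Galois but not obviously geometric, so the standard inputs to Rubin's argument---freeness of inertia invariants at ramified primes, minimal ramification, and the big image condition (BI)---have to be verified directly at $\mathbf{x}$ rather than inherited from geometry. A secondary technical point is the computation of $\opn{H}^i(\mbb{Q}_p, D^-_{\mathbf{x}})$ for $i=0,2$ from (NLZ), which must be carried out uniformly across both classical and non-classical $\eta$ where $D^-_{\mathbf{x}}$ may fail to be de Rham; here one uses that the Frobenius eigenvalues forbidden by (NLZ) are exactly those that would produce $\Gamma$-fixed $\varphi$-invariants in $D^-_{\mathbf{x}}$ or its Tate dual.
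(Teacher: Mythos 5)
Your proposal is correct and follows essentially the same route as the paper: reduce to the relaxed Selmer group via the Poitou--Tate comparison of the Panchishkin and relaxed conditions, prove surjectivity of $\xi$ by combining the Euler--Poincar\'e formula and (NLZ) with the linear independence of the two Beilinson--Flach classes coming from $L_p \neq 0$, and kill $\selmergrprel{2}{\overline{M}_{\mathbf{x}}}$ by duality with the strict condition plus Rubin's Euler system machine under (BI). The only difference is cosmetic: the paper runs the duality/Rubin step integrally, working with a lattice $\overline{T}_{\mathbf{x}}$ and the divisible module $\overline{M}_{\mathbf{x}}^*(1)/\overline{T}_{\mathbf{x}}^*(1)$ to get finiteness (and notes the discrepancy with Rubin's strict Selmer group is controlled by $\opn{H}^0(\mbb{Q}_p,\cdot)$), whereas you state the rational consequence directly.
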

\begin{proof}
Consider the local conditions $\Delta = \Delta_{\mathrm{rel}}$ and $\Delta' = \Delta_f$ and suppose for the moment that, as in the statement of Proposition \ref{PropositionPT}, the map $\xi$ is surjective. Then there is an injective map
\[
\selmergrp{2}{\overline{M}_{\mathbf{x}}} \hookrightarrow \selmergrprel{2}{\overline{M}_{\mathbf{x}}}
\]
so it is enough to show that $\selmergrprel{2}{\overline{M}_{\mathbf{x}}} = 0$. 

Let $\overline{T}_{\mathbf{x}}$ be a Galois stable lattice inside $\overline{M}_{\mathbf{x}}$ and set $A = \overline{M}_{\mathbf{x}}^*(1)/\overline{T}_{\mathbf{x}}^*(1)$. Then by the duality of the relaxed and strict local conditions we see that $\selmergrprel{2}{\overline{T}_{\mathbf{x}}}^{\vee} := \opn{Hom}_{\ordd_E}(\selmergrprel{2}{\overline{T}_{\mathbf{x}}}, E/\ordd_E)$ is equal to $\widetilde{\opn{H}}^1_{\mathrm{str}}(\mbb{Q}, A)$ (see \cite{nekovar-selmer} for more details). Furthermore, since $\selmergrprel{2}{\overline{M}_{\mathbf{x}}} = \selmergrprel{2}{\overline{T}_{\mathbf{x}}}[1/\varpi]$, where $\varpi$ is a uniformiser for $\ordd_E$, it is enough to show $\widetilde{\opn{H}}^1_{\mathrm{str}}(\mbb{Q}, A)$ is finite.

Recall that $\{c^{\alpha}_m\}$ forms an Euler system for $\overline{T}_{\mathbf{x}}$ and the bottom class $c^{\alpha}_1$ is non-zero because $L_p(\invs{F}_{k_1}, g, 1+\eta)$ is non-zero. Coupling this with the (BI) assumption, we can apply \cite[Theorem 2.2.3]{Rubin} and conclude that $\widetilde{\opn{H}}^1_{\mathrm{str}}(\mbb{Q}, A)$ is finite. Indeed, by comparing the strict and relaxed local conditions, the group $\widetilde{\opn{H}}^1_{\mathrm{str}}(\mbb{Q}, A)$ differs from the strict Selmer group in \emph{op.cit.} by the group $\opn{H}^0(\mbb{Q}_p, A)$, which is finite because $\opn{H}^0(\mbb{Q}_p, \overline{M}_{\mathbf{x}}^*(1)) = 0$. So we are left to show the map $\xi$ is surjective. 

The mapping cone $Q_p$ is precisely the same thing as the image of the Herr complex $C_{\varphi, \gamma}^{\bullet}(D^-_{\mathbf{x}})$ and by the local Euler characteristic formula (Proposition \ref{EulerCharFormula}) for $(\varphi, \Gamma)$-modules, we have $\chi(D^-_{\mathbf{x}}) = -2$. Therefore if we show that $\opn{H}^0(\mbb{Q}_p, D^-_{\mathbf{x}})$ and $\opn{H}^2(\mbb{Q}_p, D^-_{\mathbf{x}})$ both vanish, then this would imply that $\opn{H}^1(\mbb{Q}_p, D^-_{\mathbf{x}})$ is two-dimensional. Combining this with part 2 of Proposition \ref{PropositionUnramified}, this would imply that the map $\xi$ is surjective.

By duality we have $\opn{H}^2(\mbb{Q}_p, D^-_{\mathbf{x}}) \cong \opn{H}^0(\mbb{Q}_p, (D^-_{\mathbf{x}})^*(1))^*$ and from the explicit description of the triangulation (\cite[Theorem 6.3.2]{lz-coleman}) we have the following short exact sequences:
\[
\begin{aligned}
\exactseq{[\bdrigE](\alpha_{\invs{F}_{k_1}}^{-1}\alpha_g \varepsilon_g(p)^{-1})(\chi_{\mathrm{cycl}}^{1+k'}\cdot\eta^{-1})}{D^-_{\mathbf{x}}}{[\bdrigE](\alpha_{\invs{F}_{k_1}}^{-1}\alpha_g^{-1})(\eta^{-1})}  \\ 
 \\
\exactseq{[\bdrigE](\alpha_{\invs{F}_{k_1}}\alpha_g)(\chi_{\mathrm{cycl}} \cdot \eta)}{(D^-_{\mathbf{x}})^*(1)}{[\bdrigE](\alpha_{\invs{F}_{k_1}}\alpha_g^{-1} \varepsilon_g(p))(\chi_{\mathrm{cycl}}^{-k'} \cdot \eta)} \nonumber 
\end{aligned}
\]
where we denote by $[\bdrigE](\lambda)(\omega)$ the one-dimensional $(\varphi, \Gamma)$-module over $\bdrigE$ with a basis $e$ such that $\varphi(e) = \lambda e$ and $\gamma \cdot e = \omega(\gamma)e$ for all $\gamma \in \Gamma$.

From the above sequences, one sees that if either $\opn{H}^0(\mbb{Q}_p, D^-_{\mathbf{x}})$ or $\opn{H}^2(\mbb{Q}_p, D^-_{\mathbf{x}})$ didn't vanish then this would contradict the (NLZ) hypothesis. 
\end{proof}

\begin{remark}
To prove the above theorem, we only needed to assume that the two products $\alpha_{\FF_{k_1}} \alpha_g$ and $\alpha_{\FF_{k_1}} \beta_g$ are not equal to a power of $p$. However, in the following section we will relate $\selmergrp{2}{\overline{M}_{\mathbf{x}}}$ to the usual Bloch--Kato Selmer group at classical specialisations, and for this we will need to assume that all four products in the statement of (NLZ) are not equal to a power of $p$. 

Furthermore, for most non-classical specialisations we do not have to impose a (NLZ) condition. Indeed by Proposition 2.1 and Th\'{e}or\`{e}me 2.9 in \cite{ColmezRepTrianguline}, it is often the case that $\opn{H}^1(D^-_{\mathbf{x}})$ is automatically two-dimensional unless the weights in $\mathbf{x}$ are classical.
\end{remark}

\subsection{Relation to the Bloch--Kato Selmer group} \label{RelationBKSelmer}

Theorem \ref{TheoremVanishing} is a generalisation of \cite[Theorem 8.2.1]{lz-coleman} to non-classical specialisations. Indeed, suppose that $k_1$ and $\eta = \chi_{\mathrm{cycl}}^j$ are classical and we have $k' +1 \leq j \leq k_1$. Then by the duality property of Selmer complexes and the fact that the Panchishkin condition is self-dual, we have $\selmergrp{2}{\overline{M}_{\mathbf{x}}}^* \cong \selmergrp{1}{\overline{M}_{\mathbf{x}}^*(1)}$. But by the (NLZ) hypothesis, we have the following equalities:
\begin{itemize}
    \item $\opn{H}^0(\mbb{Q}_p, (D^+_{\mathbf{x}})^*(1)) = 0$.
    \item $\opn{H}^0(\mbb{Q}_p, D_{\mathbf{x}}/D^+_{\mathbf{x}}) = \opn{H}^0(\mbb{Q}_p, D^-_{\mathbf{x}}) = 0$.
\end{itemize}
Indeed, by the conditions on the Hodge--Tate weights, we have 
\begin{itemize}
    \item $\opn{H}^0(\mbb{Q}_p, (D^+_{\mathbf{x}})^*(1)) = \Cris{(D^+_{\mathbf{x}})^*(1)}^{\varphi = 1}$.
    \item $\opn{H}^0(\mbb{Q}_p, D_{\mathbf{x}}/D^+_{\mathbf{x}}) = \Cris{(D_{\mathbf{x}}/D^+_{\mathbf{x}})}^{\varphi = 1}$.
\end{itemize}
But $\varphi$ has eigenvalues $\{p^{-1-j}\beta_{\invs{F}_{k_1}}\alpha_g, p^{-1-j}\beta_{\invs{F}_{k_1}} \beta_g \}$ and $\{p^j \alpha_{\invs{F}_{k_1}}^{-1}\alpha_g^{-1}, p^j \alpha_{\invs{F}_{k_1}}^{-1}\beta_g^{-1} \}$ on $\Cris{(D^+_{\mathbf{x}})^*(1)}$ and $\Cris{(D_{\mathbf{x}}/D^+_{\mathbf{x}})}$ respectively, and these products can never be equal to $1$ by the (NLZ) hypothesis. Therefore, by \cite[Proposition 3.7]{pottharst-selmer}, we see that
\[
\selmergrp{1}{\overline{M}_{\mathbf{x}}^*(1)} = \opn{H}^1_f(\mbb{Q}, \overline{M}_{\mathbf{x}}^*(1))
\]
where the latter is the Bloch--Kato Selmer group. This recovers Theorem 8.3.1 in \cite{lz-coleman}. In fact the proof of Theorem \ref{TheoremVanishing} is modelled on the proof in \emph{loc. cit.}.


\section{The Selmer sheaf} \label{selmer sheaf}

In the previous section we showed that (under certain hypotheses) if the specialisation of $L_p$ is non-zero then $\selmergrp{2}{\overline{M}_{\mathbf{x}}} = 0$. It turns out that we can package together all of these cohomology groups into a coherent sheaf over $V_1 \times V_2 \times \invs{W}$ using the machinery of Selmer complexes. We follow closely the construction in \cite{pottharst-selmer}. 

\subsection{Assumptions} \label{SelmerSheafAssumptions}

Recall that $\invs{W}/\mbb{Q}_p$ denotes the rigid analytic space parameterising continuous characters $\Gamma = \Gal(\mbb{Q}(\mu_{p^{\infty}})/\mbb{Q}) \to \mbb{C}_p^{\times}$. Let $V_1$ and $V_2$ be two affinoid subdomains of $(\invs{W})_E$ and set $X = V_1 \times V_2 \times \invs{W}$. Then $X$ has admissible cover $\invs{U} = (Y_n)_{n \geq 1}$ given by 
\[
Y_n = V_1 \times V_2 \times \invs{W}_n
\]  
where $\invs{W}_n$ is the open affinoid subdomain of $\invs{W}$ parameterising all characters $\eta$ that satisfy $|\eta(\gamma)^{p^{n-1}} - 1|_p \leq p^{-1}$, where $\gamma$ is a topological generator of $\Gamma/\Gamma_{\mathrm{tors}}$.
The restriction maps $\ordd(Y_{n+1}) \to \ordd(Y_n)$ have dense image.  Hence $X$ is a quasi-Stein space.

Let $A_{\infty} \defeq \OO(X)$, $A_n \defeq \OO(Y_n)$.  Note that for all $n \geq 1$, $A_n$ is flat over $A_{\infty}$.

Let $\invs{F}$ and $\invs{G}$ be two Coleman families over $V_1$ and $V_2$ passing through $p$-stabilisations of $f$ and $g$ respectively, and let $M$ denote the representation $M(\invs{F})^* \hatot M(\invs{G})^*$. Fix a Galois stable lattice $T$ inside $M$, i.e. a free rank four $\ordd(V_1 \times V_2)^\circ$-submodule that is stable under the action of $G_{\mbb{Q}}$.

Let $\overline{M} = D^{\mathrm{la}}(\Gamma, M)$ denote the cyclotomic deformation of $M$, and for any $n \geq 1$ we set $M_n = M(-\kappa_n)$, where $(-\kappa_n)$ denotes the twist by the inverse of the universal character of $\invs{W}_n$. Then we also obtain a Galois stable lattice $T_n := T(-\kappa_n)$ inside $M_n$.

Let $\Sigma$ be a finite set of places containing $p, \infty$ and all primes where $M$ ramifies. Then $\overline{M}$ is a family of $G_{\Sigma}$-representations over the space $X$ and we place ourselves in situation (4) in \cite[\S 1.5]{pottharst-selmer}.

By an $\ordd_{\invs{U}}$-module we mean a compatible system of $A_n$-modules. Let $\rcont{G_{\Sigma}}{\overline{M}}$ denote the image of the complex of continuous cochains $C^\bullet_{\mathrm{cont}}(G_{\Sigma}, \overline{M})$ in the derived category of $\ordd_{\invs{U}}$-modules. Explicitly, $C^\bullet_{\mathrm{cont}}(G_{\Sigma}, \overline{M})$ is defined by the rule
\[
Y_n \mapsto C^\bullet_{\mathrm{cont}}(G_{\Sigma}, M_n)
\]
where we note that $M_n = \Gamma(Y_n, \overline{M})$.

\begin{lemma} \label{PullbackLemma}
\begin{enumerate}
    \item $\rcont{G_{\Sigma}}{\overline{M}}$ is a perfect complex, in the sense that it is quasi-isomorphic to a complex $D^\bullet$, concentrated in finitely many degrees, such that $\Gamma(Y_n, D^\bullet)$ is a finite projective $A_n$-module. 
    \item Let $\iota_n: Y_n \hookrightarrow X$ denote the inclusion. Then
    \[
    \mathbf{L}\iota_n^* \rcont{G_{\Sigma}}{\overline{M}} \cong \rcont{G_{\Sigma}}{M_n}.
    \]
\end{enumerate}
\end{lemma}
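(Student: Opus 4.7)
The plan is to prove the two parts in sequence, essentially following the strategy of \cite[\S 1.5]{pottharst-selmer}.

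For part (1), I would work affinoid-by-affinoid along the admissible cover $\invs{U} = (Y_n)_{n \geq 1}$. On each $Y_n$, the module $M_n = \Gamma(Y_n, \overline{M}) = M(-\kappa_n)$ is a finite projective module over the affinoid algebra $A_n$ (it is the twist of a rank-four free module by the universal character of $\invs{W}_n$) and carries a continuous $A_n$-linear action of $G_\Sigma$. Since $\Sigma$ is finite and contains $p$ and $\infty$, the group $G_\Sigma$ has finite $p$-cohomological dimension, so the standard finiteness theorem---proved for continuous cochains with coefficients in finite projective modules over affinoid algebras in \cite{pottharst-selmer}, building on \cite{nekovar-selmer}---produces a bounded complex $P_n^\bullet$ of finite projective $A_n$-modules, concentrated in degrees $[0,2]$ and quasi-isomorphic to $C^\bullet_{\mathrm{cont}}(G_\Sigma, M_n)$. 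To upgrade this to a perfect complex of $\ordd_{\invs{U}}$-modules, I would invoke the functoriality of the construction together with the compatibility $M_n \cong M_{n+1} \otimes_{A_{n+1}} A_n$ under the transition maps $A_{n+1} \to A_n$.

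For part (2), the key observation is that $\iota_n^* \rcont{G_\Sigma}{\overline{M}}$ is, by the definition of $\rcont{G_\Sigma}{\overline{M}}$ as the compatible system $Y_m \mapsto C^\bullet_{\mathrm{cont}}(G_\Sigma, M_m)$, precisely $C^\bullet_{\mathrm{cont}}(G_\Sigma, M_n) \simeq \rcont{G_\Sigma}{M_n}$. It therefore suffices to verify that derived pullback coincides with ordinary pullback in our situation, and this follows from either the perfectness established in (1), or directly from the flatness of $A_n$ over $A_\infty$ recorded at the start of the section.

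The main technical obstacle lies in part (1). The subtlety is that $\overline{M}$ is not finitely generated over the Fr\'{e}chet algebra $A_\infty$; it is only a compatible system of finite projective modules over the affinoid pieces $A_n$, so the finite-cohomological-dimension arguments must be performed level-by-level and then glued. One must verify both that the perfect representatives on each $Y_n$ can be arranged compatibly under restriction $Y_n \hookrightarrow Y_{n+1}$, and that the resulting compatible system genuinely defines an object in the derived category of $\ordd_{\invs{U}}$-modules. These compatibilities, however, are formal consequences of the functoriality of continuous cochains and the flat base change identity $M_n = M_{n+1} \otimes_{A_{n+1}} A_n$.
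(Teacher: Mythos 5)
Your proposal is essentially the paper's proof: the paper disposes of part (1) by citing the discussion in \cite[\S 1.2]{pottharst-selmer} and of part (2) by citing Theorem 1.6 of \emph{op.\,cit.}, which are precisely the affinoid-level finiteness and base-change results you invoke on each $Y_n$ and then propagate along the cover. One caution: the level-to-level comparison $\rcont{G_{\Sigma}}{M_{n+1}} \otimes^{\mathbf{L}}_{A_{n+1}} A_n \simeq \rcont{G_{\Sigma}}{M_n}$ needed for your gluing is not a formal consequence of functoriality and $M_n \cong M_{n+1}\otimes_{A_{n+1}}A_n$ (continuous cochains do not commute with base change on the nose); it is itself Pottharst's base-change theorem, so your appeal to ``formal consequences'' should be read as an appeal to that cited result, after which the outline is fine.
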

\begin{proof}
For the first part, this follows from the discussion in \cite[\S 1.2]{pottharst-selmer}, and the second part is just Theorem 1.6 in \emph{op.cit.}.
\end{proof}

Since $X$ is a quasi-Stein space we also have an alternative description of $\rcont{G_{\Sigma}}{\overline{M}}$, namely as the image of the complex $\varprojlim_n C^\bullet_{\mathrm{cont}}(G_{\Sigma}, M_n)$ in the derived category of $A_{\infty}$-modules. By the above lemma and Kiehl's theorem, $\rcont{G_{\Sigma}}{\overline{M}}$ is quasi-isomorphic to a complex of locally free (of finite rank) $\ordd_X$-modules, so in particular its cohomology groups are coherent sheaves on $X$. Furthermore, since $X$ is quasi-Stein, a coherent sheaf on $X$ is determined by its global sections, so we will often use these two descriptions interchangeably. We say an $A_{\infty}$-module is \emph{coadmissible} if it arises as the global sections of a coherent sheaf on $X$.

As in \S \ref{CohomologicalPreliminaries}, for a collection $\Delta = \{\Delta_v\}_{v \in \Sigma}$ of local conditions
\[
\begin{aligned}
\Delta_v \xrightarrow{\iota_v} \rcontl{\overline{M}} & \; \; \; \;  & \Delta_v \in \mathbf{D}_{\mathrm{ft}}^{[0, 2]}(A_{\infty}\text{-Mod})
\end{aligned}
\]
where $\mathbf{D}_{\mathrm{ft}}^{[0, 2]}(A_{\infty}\text{-Mod})$ is the derived category of complexes of $A_{\infty}$-modules concentrated in degrees $[0, 2]$ whose cohomology groups are coadmissible, we can construct the Selmer complex $\selcomo{\overline{M}}$ which is an object in the derived category of $A_{\infty}$-modules, concentrated in degrees $[0, 3]$, whose cohomology groups are coadmissible (see \cite[\S 1.5]{pottharst-selmer}). 

We impose the following assumptions on $f$ and $g$. Let $x_0 = (k, k', 0) \in X$, where $k+2$ and $k'+2$ are the weights of $f$ and $g$ respectively, and choose $n$ such that $x_0 \in Y_n$.
\begin{enumerate}
\item \emph{(Flatness of inertia)} If $\ide{p}_0$ denotes the prime ideal of $A_n^\circ$ corresponding to the point $x_0$, then we let $A_{n, \ide{p}_0}^\circ$ and $T_{n, \ide{p}_0}$ denote the localisations of $A_n^\circ$ and $T_n$ at $\ide{p}_0$. For every place $v \in \Sigma$ not equal to $p$, we let $I_v$ denote the inertia subgroup of the fixed decomposition group at $v$.

Then we assume that $T_{n, \ide{p}_0}^{I_v}$ is a \emph{flat} $A_n^\circ$-module, for all $v \in \Sigma \backslash \{p\}$. Since $A_n^\circ$ is a commutative Noetherian local ring, this is equivalent to $T_{n, \ide{p}_0}^{I_v}$ being free.  

In particular, by generic flatness this implies that there exists a Zariski open subset $U$ of $Y_n$ containing $x_0$ such that $(T_n)_U^{I_v}$ is a flat $\ordd(U)^\circ$-module.

\item \emph{(Minimally ramified)} Let $\ide{m}_0$ denote the maximal ideal in $A_n^{\circ}$ containing a uniformiser $\varpi$ of $E$ and the prime ideal $\ide{p}_0$ corresponding to $x_0$. Then $A_n^{\circ}/\ide{m}_0 = k$, where $k$ is the residue field of $\ordd_E$, and the ``mod $p$ representation'' of $T_n$ at the point $x_0$ is defined to be
\[
T_{\bar{\mbb{F}}_p} := (T_n \otimes_{A_n^{\circ}} \bar{\mbb{F}}_p)^{\mathrm{ss}}
\]
where ss stands for semi-simplification and the tensor product is via the map $A_n^{\circ} \to A_n^{\circ}/\ide{m}_0 \hookrightarrow \bar{\mbb{F}}_p$. 

We assume that we have the following equality 
\[
\opn{dim}_k T_n^{I_v}/\ide{m}_0 = \opn{dim}_{\bar{\mbb{F}}_p} (T_{\bar{\mbb{F}}_p})^{I_v}
\]
for all $v \in \Sigma \backslash \{p\}$. In other words, $f$ and $g$ are not congruent to forms of a lower level.    
\end{enumerate}

\begin{remark}
We give examples of pairs of modular forms satisfying the above assumptions in the appendix (\S \ref{JustificationForFI}).
\end{remark}

\begin{lemma} \label{FlatnessLemma}
Assume that the conditions (1) and (2) above hold. Then, after possibly shrinking $V_1$ and $V_2$, we have 
\begin{itemize}
    \item For all $n \geq 1$ and $v \in \Sigma \backslash \{p\}$, $T_n^{I_v}$ is a flat $A_n^\circ$-module.
    \item Let $\eta \in \invs{W}_n$ be a closed point and let $\mathbf{x} = (k, k', \eta)$ (so $\mathbf{x} \in Y_n$). If $\ide{m}$ denotes the maximal ideal of $A_n^\circ$ containing $\varpi$ and the prime ideal associated to $\mathbf{x}$, then for all $v \in \Sigma \backslash \{p\}$
    \[
    \opn{dim}_k T_n^{I_v}/\ide{m} = \opn{dim}_{\bar{\mbb{F}}_p} \left( (T_n \otimes_{A_n^\circ} \bar{\mbb{F}}_p )^{\mathrm{ss}} \right)^{I_v}
    \]
    where $k = A_n^\circ/\ide{m}$ and the tensor product in the right-hand side is via the map $A_n^\circ \to k \hookrightarrow \bar{\mbb{F}}_p$.
\end{itemize}
\end{lemma}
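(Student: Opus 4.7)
The plan is to reduce both assertions to flatness of $T^{I_v}$ as a module over $B \defeq \ordd(V_1)^\circ \hatot \ordd(V_2)^\circ$, and then use generic flatness to shrink $V_1$ and $V_2$ once and for all.

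I would first observe that for every $v \in \Sigma \setminus \{p\}$, the universal character $\kappa_n$ factors through $\Gamma = \opn{Gal}(\mbb{Q}(\mu_{p^{\infty}})/\mbb{Q})$ and is therefore unramified at $v$. Consequently $I_v$ acts on $T_n = T(-\kappa_n)$ through its action on $T$, and since $A_n^\circ$ is flat over $B$, taking $I_v$-invariants commutes with this base change, giving the identification
\[
T_n^{I_v} \;=\; T^{I_v} \otimes_B A_n^\circ .
\]
This reduces flatness of $T_n^{I_v}$ over $A_n^\circ$, for every $n$, to flatness of $T^{I_v}$ over $B$.

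For the first bullet, hypothesis (1) provides flatness of $(T_n)_{\ide{p}_0}^{I_v}$ over $(A_n^\circ)_{\ide{p}_0}$ at the specific $n$ with $x_0 \in Y_n$. The natural map $B \to A_n^\circ$ is flat and, localized at $(k,k') \to \ide{p}_0$, becomes faithfully flat; faithfully flat descent of flatness then shows that $T^{I_v}$ is flat over $B$ at the prime $(k,k')$. Generic flatness of the finitely generated $B$-module $T^{I_v}$ yields a Zariski open neighborhood $U_v \subset \mathrm{Spec}(B)$ of $(k,k')$ on which $T^{I_v}$ is flat. Since $\Sigma \setminus \{p\}$ is finite, I can shrink $V_1$ and $V_2$ so that $V_1 \times V_2 \subset \bigcap_v U_v$; base change then gives flatness of $T_n^{I_v}$ over $A_n^\circ$ for every $n \geq 1$ and every $v \in \Sigma \setminus \{p\}$.

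For the second bullet, $T_n^{I_v}$ becomes a finitely generated flat, hence locally free, module over the Noetherian ring $A_n^\circ$; its rank equals the $B$-rank of $T^{I_v}$, a single integer $r_v$ by connectedness of $V_1 \times V_2$. Hence the left-hand side equals $r_v$. For the right-hand side, triviality of $\kappa_n|_{I_v}$ yields the isomorphism of $G_{\mbb{Q}}$-representations
\[
(T_n \otimes_{A_n^\circ} \bar{\mbb{F}}_p)^{\mathrm{ss}} \;\cong\; (T \otimes_B \bar{\mbb{F}}_p)^{\mathrm{ss}} \otimes_{\bar{\mbb{F}}_p} \bar{\eta}^{-1},
\]
where $\bar{\eta}$ is the mod-$p$ reduction of $\eta$; since $\eta$ is a character of $\Gamma$, $\bar{\eta}|_{I_v}$ is trivial, so $((T_n \otimes \bar{\mbb{F}}_p)^{\mathrm{ss}})^{I_v}$ has the same $\bar{\mbb{F}}_p$-dimension as $((T \otimes \bar{\mbb{F}}_p)^{\mathrm{ss}})^{I_v}$, independently of $\eta$. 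Hypothesis (2), applied at $x_0$, identifies this common dimension with $r_v$, so the two sides match.

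The main obstacle is the technical step of descending flatness from $(A_n^\circ)_{\ide{p}_0}$ to $B_{(k,k')}$; one must verify carefully that the localized structure map is faithfully flat and that taking $I_v$-invariants commutes with the flat base change $B \to A_n^\circ$, both of which are standard but require attention to the residue fields of the relevant primes and to the profinite structure of $I_v$.
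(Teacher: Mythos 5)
Your proposal is correct and follows essentially the same route as the paper: the paper's (two-sentence) proof likewise untwists the cyclotomic variable using that every character of $\Gamma$ is trivial on $I_v$ for $v \neq p$, and then shrinks $V_1, V_2$ so that the Zariski-open flat locus provided by hypothesis (1) and generic flatness contains a product neighbourhood, with the dimension identity reduced to hypothesis (2) at $x_0$. Your version merely makes explicit the descent of flatness to the two-variable base and the local-freeness/rank bookkeeping that the paper leaves implicit.
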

\begin{proof}
By shrinking $V_1$ and $V_2$ if necessary, we can assume that the set $U$ in (1) above contains $V_1 \times V_2 \times U'$ for some open affinoid $U' \subset \invs{W}_n$. The lemma then follows from the fact that any character of $\Gamma$ restricted to $I_v$ is trivial, for $v \neq p$.
\end{proof}

\subsection{The Selmer sheaf} \label{ConstructionOfSelmerSheaf}

We first fix some notation. If $D_1$ and $D_2$ are rank two $(\varphi, \Gamma)$-modules over $\bdrigE$ equipped with a triangulation
\[
0 \rightarrow \F{+}{D_i} \rightarrow D_i \rightarrow \F{-}{D_i} \rightarrow 0
\]
where $\mathscr{F}^{\pm}D_i$ are rank one $(\varphi, \Gamma)$-modules, then we set 
\[
\Ff{\diamondsuit}{\clubsuit}{D} = \F{\diamondsuit}{D_1} \hatot \F{\clubsuit}{D_2}
\]
for $\diamondsuit, \clubsuit \in \{+, -, o \}$, where $D = D_1 \hatot D_2$.

Let $\ddrig{M}$ denote the $(\varphi, \Gamma)$-module over $\bdrigA{A}$ associated to the representation $M$, where $A = \ordd(V_1) \hatot \ordd(V_2)$. As in section \ref{ApplicationTo}, assuming $V_1$ and $V_2$ are small enough we have two possible triangulations for $\ddrig{M}$, namely 
\[
\{0\} \subset \Ff{+}{+}{\ddrig{M}} \subset \Ff{+}{o}{\ddrig{M}} \subset \Ff{+}{o}{\ddrig{M}} + \Ff{o}{+}{\ddrig{M}} \subset \ddrig{M}
\]
and
\[
\{0\} \subset \Ff{+}{+}{\ddrig{M}} \subset \Ff{o}{+}{\ddrig{M}} \subset \Ff{+}{o}{\ddrig{M}} + \Ff{o}{+}{\ddrig{M}} \subset \ddrig{M}
\]
differing only by the middle term in the filtration. 

Let $\mathbf{x} = (k_1, k_2, j)$ be a classical point in $X$ satisfying $0 \leq k_2 < k_1$ and denote the specialisation of $\overline{M}$ at $\mathbf{x}$ by $\overline{M}_{\mathbf{x}}$. The Hodge--Tate weights of $\overline{M}_{\mathbf{x}}$ are 
\[
-j, \; \;  k_2+1 - j, \; \;  k_1+1 - j, \; \;  k_1 + k_2 +2 - j
\]
We want to define a sheaf that interpolates the classical Bloch--Kato Selmer group; the correct local condition that we will need to take will therefore depend on the range we want to interpolate over. For example
\begin{itemize}
\item Suppose that $\mbf{x}$ lies in the \emph{geometric range}, i.e. one has $0 \leq j \leq \opn{min}\{k_1, k_2\}$. Then one can take the local condition at $p$ to be the cohomology of $\mathscr{F}^{+o} + \mathscr{F}^{o+}$. Indeed, this specialises to a Panchishkin submodule at $\mbf{x}$ (recall that a Panchishkin submodule of a de Rham $(\varphi, \Gamma)$-module $D$ is a submodule $D^+$ such that $D^+$ (resp. $D/D^+$) has positive (resp. non-negative) Hodge--Tate weights).
\item Suppose that $\mbf{x}$ lies in the \emph{critical range}, i.e. one has $k_2+1 \leq j \leq k_1$. Then one can take the local condition at $p$ to be the cohomology of $\mathscr{F}^{+o}$.
\end{itemize}
In this paper we are interested in interpolating in the critical range, since it is precisely the range where the $p$-adic $L$-function interpolates (critical) values of the global $L$-function.

We denote by $\ddrig{\overline{M}}$ the family of $(\varphi, \Gamma)$-modules over $X$ satisfying $\Gamma(Y_n, \ddrig{\overline{M}}) = \ddrig{M_n}$. This comes equipped with the triangulations $\Ff{\diamondsuit}{\clubsuit}{\ddrig{\overline{M}}}$ defined previously, i.e. $\Ff{\diamondsuit}{\clubsuit}{\ddrig{\overline{M}}}$ is the family of $(\varphi, \Gamma)$-modules satisfying 
\[
\Gamma(Y_n, \Ff{\diamondsuit}{\clubsuit}{\ddrig{\overline{M}}}) = \Ff{\diamondsuit}{\clubsuit}{\ddrig{M}}(-\kappa_n)
\]
where $(-\kappa_n)$ denotes the twist by the inverse of the universal character of $\invs{W}_n$.

We consider the following set of local conditions $\Delta = \{\Delta_v\}_{v \in \Sigma}$ where 
\begin{itemize}
\item For $v \neq p$, $\Delta_v$ is the unramified condition, i.e. $\Delta_v$ is the complex
\[
\Delta_v := \rcont{G_v/I_v}{\overline{M}^{I_v}} \rightarrow \rcontl{\overline{M}}
\]
\item For $v = p$ we take $\Delta_p$ to be the Panchishkin local condition given by
\[
\Delta_p := \rcont{G_p}{\Ff{+}{o}{\ddrig{\overline{M}}}} \rightarrow \rcont{G_p}{\ddrig{\overline{M}}} \cong \rcont{G_p}{\overline{M}}
\]
where $\rcont{G_p}{\Ff{+}{o}{\ddrig{\overline{M}}}}$ denotes the image of the family of Herr complexes $\invs{C}_{\varphi, \gamma}^{\bullet}(\Ff{+}{o}{\ddrig{\overline{M}}})$ in the derived category of $\ordd_{\invs{U}}$-modules, as defined in \S \ref{CohomologyPHIGAMMA}.
\end{itemize}

By Lemma \ref{FlatnessLemma}, if $V_1$ and $V_2$ are small enough (which we will assume from now on) then the above local conditions lie in $\mathbf{D}_{\mathrm{ft}}^{[0, 2]}(A_{\infty}\mathrm{-Mod})$ so we can talk about the corresponding Selmer complex.

\begin{remark}
We have defined the local conditions in terms of $\ordd_{\invs{U}}$-modules, but this is equivalent to specifying local conditions in terms of coadmissible modules by \cite[Theorem 1.13]{pottharst-selmer} and the discussion preceding it.
\end{remark}

\begin{definition}
Let $\selcomof{\overline{M}}$ denote the Selmer complex associated to $\overline{M}$ and the local conditions $\Delta$, with cohomology groups denoted by $\widetilde{H}_f^i(\mbb{Q}, \overline{M})$. 
\end{definition}

As explained in the paragraph preceding Proposition \ref{PropertiesOfSelmerComplexes}, the groups $\widetilde{H}_f^i(\mbb{Q}, \overline{M})$ are coherent sheaves on $X$ and satisfy $\widetilde{H}_f^i(\mbb{Q}, \overline{M}) = 0$ for $i \neq 0, 1, 2, 3$. 

\begin{theorem} \label{SheafTheorem}
We can take $V_1$ and $V_2$ small enough such that the following hold:
\begin{enumerate}
\item For each $n \geq 0$, $A_n$ is a flat $A_{\infty}$-module and the natural map 
\[
\selcomof{\overline{M}} \otimes_{A_{\infty}} A_n \rightarrow \selcomof{M_n}
\]
is an isomorphism, where $\selcomof{M_n}$ denotes the Selmer complex associated to $M_n = \Gamma(Y_n, \overline{M})$ with unramified local conditions at $v \neq p$ and the Panchishkin condition $\invs{C}_{\varphi, \gamma}^{\bullet}(\Ff{+}{o}{\ddrig{M_n}})$ at $p$. 
\item Let $\mbf{x}$ be an $E'$-valued point in $Y_n$ and $v$ a prime in $\Sigma$ not equal to $p$. Then the natural map 
\[
(M_n^{I_v})_\mbf{x} \rightarrow (\overline{M}_{\mathbf{x}})^{I_v}
\] 
is an isomorphism and hence we have an isomorphism
\[
\selcomof{M_n} \otimes^{\mathbf{L}}_{A_n, \mbf{x}} E' \cong \selcomof{\overline{M}_{\mathbf{x}}}
\]
where $\selcomof{\overline{M}_{\mathbf{x}}}$ is the Selmer complex associated to $\overline{M}_{\mathbf{x}}$ with unramified local conditions away from $p$ and the Panchishkin condition $\invs{C}_{\varphi, \gamma}^{\bullet}(\Ff{+}{o}{\ddrig{\overline{M}_{\mathbf{x}}}})$ at $p$ (compare with the definition in section \ref{ConvolutionOfTwo}). 
\end{enumerate}
\end{theorem}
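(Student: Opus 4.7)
The plan is to verify both base-change statements by analysing each term in the mapping fibre defining $\selcomof{\overline{M}}$ separately. Recall this cone is built out of the global cochain complex $\rcont{G_\Sigma}{\overline{M}}$, the local cochain complexes $\rcontl{\overline{M}}$ for $v \in \Sigma$, and the local conditions $\Delta_v$, so it suffices to establish base-change for each of these pieces.

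First I would establish part (1). Flatness of $A_n$ over $A_\infty$ is a standard consequence of the quasi-Stein structure of $X$: since the restriction maps $\OO(Y_{n+1}) \to \OO(Y_n)$ have dense image, $A_\infty$ is Fr\'echet--Stein and localisation to any admissible affinoid is flat. Given flatness, Lemma \ref{PullbackLemma}(2) identifies $\rcont{G_\Sigma}{\overline{M}} \otimes_{A_\infty} A_n$ with $\rcont{G_\Sigma}{M_n}$, because $\mathbf{L}\iota_n^*$ collapses to ordinary pullback. The same reasoning handles the local terms $\rcontl{\overline{M}}$. For the unramified condition at $v \neq p$, Lemma \ref{FlatnessLemma} (applied after shrinking $V_1$ and $V_2$) gives that $T_n^{I_v}$ is flat over $A_n^\circ$ for all $n$; the cohomology of the procyclic group $G_v/I_v \cong \widehat{\mbb{Z}}$ is then computed by a two-term complex $[N \xrightarrow{\sigma_v - 1} N]$ that manifestly commutes with flat base change. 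For the Panchishkin condition at $p$, the family $\Ff{+}{o}{\ddrig{\overline{M}}}$ is obtained from the rank-two sub-$(\varphi,\Gamma)$-module $\Ff{+}{o}{\ddrig{M}}$ over $V_1 \times V_2$ by twisting with the universal character of $\invs{W}$, so its formation commutes with base change; the associated Herr complex then base-changes by direct inspection.

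For part (2), fix a closed point $\mathbf{x} \in Y_n$ with residue field $E'$. The key step is to verify that the natural map $(M_n^{I_v})_\mathbf{x} \to (\overline{M}_\mathbf{x})^{I_v}$ is an isomorphism, from which the derived isomorphism $\selcomof{M_n} \otimes^{\mathbf{L}}_{A_n,\mathbf{x}} E' \cong \selcomof{\overline{M}_\mathbf{x}}$ then follows by the same term-by-term analysis combined with perfectness of each complex in the cone. Injectivity is automatic. To get surjectivity, I would invoke Lemma \ref{FlatnessLemma}: freeness of $T_n^{I_v}$ over the Noetherian local ring $A_n^\circ$ forces the fibre $(T_n^{I_v})_\mathbf{x}$ to have dimension equal to the generic rank, which by the minimally ramified equality at $x_0$ equals $\opn{dim}_{\bar{\mbb{F}}_p}(T_{\bar{\mbb{F}}_p})^{I_v}$. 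Since the dimension of inertia-invariants is upper-semicontinuous across the family and the inertia action is insensitive to the cyclotomic variable (any character $\Gamma \to \mbb{C}_p^\times$ is trivial on $I_v$ for $v \neq p$), this rank coincides with $\opn{dim}_{E'}(\overline{M}_\mathbf{x})^{I_v}$ at every closed point of $Y_n$, and the injection is an isomorphism of vector spaces of equal dimension.

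The main obstacle is the interplay between the two hypotheses of \S\ref{SelmerSheafAssumptions}: the flatness-of-inertia condition is what lets us form a coherent family of inertia invariants at all, while the minimally ramified condition is what prevents the rank from jumping at points of $Y_n$ away from $x_0$. The saving observation is that the inertia action at $v \neq p$ depends only on $V_1 \times V_2$ and not on the cyclotomic disc $\invs{W}_n$, so generic flatness on a Zariski open neighbourhood of $x_0$ in $V_1 \times V_2$ propagates, after shrinking, uniformly across every fibre $Y_n$; this is why we must be free to replace $V_1$ and $V_2$ by smaller affinoids.
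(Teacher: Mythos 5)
The essential gap is in your part (2), in the step where you pass from the point $x_0$ to an arbitrary point $\mathbf{x} \in Y_n$. Semicontinuity goes the wrong way for your purposes: for a family $M_n$ the function $\mathbf{x} \mapsto \opn{dim}_{k(\mathbf{x})}\bigl((M_n \otimes k(\mathbf{x}))^{I_v}\bigr)$ is upper semicontinuous, so it can only \emph{jump up} at special points. Knowing that the fibre dimension at the single point $x_0$ equals the generic rank of $T_n^{I_v}$ (which is what flatness plus the minimally ramified equality at $x_0$ gives you) does not prevent the fibre dimension from being strictly larger at some other $\mathbf{x} \in Y_n$, and in that case the injection $(M_n^{I_v})_{\mathbf{x}} \hookrightarrow (\overline{M}_{\mathbf{x}})^{I_v}$ would fail to be surjective. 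What is missing is a \emph{uniform upper bound} valid at every maximal ideal. The paper supplies exactly this in Lemma \ref{KeyLemma}: one reduces a lattice at an arbitrary maximal ideal to characteristic $p$, bounds $\opn{dim}(T/\ide{n}T)^{I_v}$ by the inertia invariants of the semisimplified residual representation, and then uses the fact that the mod $p$ representation of a Coleman family is \emph{constant} to identify this bound with the corresponding quantity at $x_0$, where the minimally ramified hypothesis turns the whole chain of inequalities into equalities; flatness of $T^{I_v}$ then forces equality at every point, and a final $\varpi$-inversion step ($W/JW = 0 \Rightarrow W[1/\varpi]=0$) transfers the statement from the residue-characteristic-$p$ fibres back to the characteristic-zero point $\mathbf{x}$ with residue field $E'$. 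Your proposal never invokes the constancy of the residual representation, never compares the fibre at $\mathbf{x}$ with characteristic $p$, and never addresses the char-$0$ versus char-$p$ transfer, so the rank-jumping problem is not ruled out. (Note also that Lemma \ref{FlatnessLemma} only asserts the minimally-ramified-type equality at points of the form $(k,k',\eta)$, not at arbitrary points of $Y_n$, so it cannot substitute for Lemma \ref{KeyLemma}.)

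Two smaller points. The injectivity of $(M_n^{I_v})_{\mathbf{x}} \to (\overline{M}_{\mathbf{x}})^{I_v}$ is not literally automatic: tensoring $0 \to M_n^{I_v} \to M_n \to Q \to 0$ with $k(\mathbf{x})$ produces a $\opn{Tor}_1$ obstruction, and the paper handles the analogous integral statement via the identity $(\ide{n}T)^{I_v} = \ide{n}T^{I_v}$ inside the lattice argument. And in part (1), the base change of the Panchishkin condition is not ``direct inspection'': the terms of the Herr complex of $\Ff{+}{o}{\ddrig{\overline{M}}}$ are not finite $A_{\infty}$-modules, and one needs the finiteness/coadmissibility theorem (\cite[Theorem 2.5]{pottharst-selmer}, resting on the Kedlaya--Pottharst--Xiao finiteness of cohomology of families of $(\varphi,\Gamma)$-modules) before flatness of $A_n$ over $A_{\infty}$ can be applied. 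Your treatment of the unramified conditions via the two-term complex for $G_v/I_v$ is a reasonable alternative to the paper's appeal to Pottharst's ``hypothesis A'', but the part (2) argument as written does not close.
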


We will prove this theorem in the next section. Combining this with Theorem \ref{TheoremVanishing}, we obtain the following corollary. 

\begin{corollary}
Let $f$ and $g$ be two modular forms as in section \ref{SummaryOfResults} and let $\invs{F}$ and $\invs{G}$ be Coleman families over $V_1$ and $V_2$ passing through $p$-stabilisations of $f$ and $g$ respectively. Let $\overline{M}$ denote the cyclotomic deformation of $M(\invs{F})^* \hatot M(\invs{G})^*$ as above, and let $\invs{S} = \selmergrp{2}{\overline{M}}$ denote the coherent sheaf obtained as the second cohomology group of the Selmer complex attached to $\overline{M}$.
\begin{enumerate}
    \item Suppose that the (NLZ) hypothesis holds for $f$ and $g$ and that the ``flatness of inertia'' and ``minimally ramified'' hypotheses hold for $\overline{M}$ and $x_0 = (k, k', 0)$. Then, shrinking $V_1$ and $V_2$ if necessary, for all $\mathbf{x} = (k_1, k_2, j) \in X$ with $k_1, k_2, j$ integers satisfying $1 \leq k_2 + 1 \leq j \leq k_1$, the specialisation of $\invs{S}$ satisfies 
    \[
    \invs{S}_{\mathbf{x}} \cong \opn{H}^1_f(\mbb{Q}, [M(\invs{F}_{k_1}) \otimes M(\invs{G}_{k_2})](1+j) )^*
    \]
    where the right-hand side is the (dual of the) Bloch--Kato Selmer group.
    \item (Theorem \ref{SecondMainTheorem}) Suppose that the (NLZ) hypothesis holds for $f$ and $g$, and that the (BI), ``flatness of inertia'' and ``minimally ramified'' hypotheses hold for $\overline{M}$ and $x_0 = (k , k', 0)$. Then, shrinking $V_1$ if necessary, we have the following inclusion 
\[
\opn{supp}\invs{S}_{k'} \subset \{L_p = 0\} 
\]
where ``$\,\opn{supp}$'' denotes the support of a sheaf, $L_p$ is the three-variable $p$-adic $L$-function and $\invs{S}_{k'}$ denotes the specialisation of $\invs{S}$ at $k'$ in the second variable.
\end{enumerate}
\end{corollary}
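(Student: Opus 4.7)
The plan is to deduce both parts of the corollary from three ingredients already in place: the specialisation isomorphism of Theorem \ref{SheafTheorem}, the vanishing result Theorem \ref{TheoremVanishing}, and the comparison between the Panchishkin and Bloch--Kato local conditions at classical points discussed in \S\ref{RelationBKSelmer}. The first step, shared by both parts, is to shrink $V_1$ (and if needed $V_2$) so that the openness statements gathered in \S\ref{SelmerSheafAssumptions} and the remarks on (NLZ) and (BI) hold uniformly on the chosen affinoids, and so that the conclusion of Theorem \ref{SheafTheorem} is available.

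For part (1), fix a classical point $\mathbf{x}=(k_1,k_2,j)$ with $1\le k_2+1\le j\le k_1$, and let $E'=\kappa(\mathbf{x})$. Choose $n$ large enough so that $\mathbf{x}\in Y_n$. By Theorem \ref{SheafTheorem}(1)--(2), the specialisation $\selcomof{\overline{M}}\otimes^{\mathbf{L}}_{A_\infty}E'$ is quasi-isomorphic to the Selmer complex $\selcomof{\overline{M}_{\mathbf{x}}}$ attached to the pointwise Panchishkin condition, so in particular the second cohomology $\widetilde{H}^2_f(\mathbb{Q},\overline{M}_{\mathbf{x}})$ computes the specialisation $\mathcal{S}_{\mathbf{x}}$. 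Applying Grothendieck duality for Selmer complexes (Proposition \ref{PropertiesOfSelmerComplexes}(1)), and using that the Panchishkin submodule $\mathscr{F}^{+o}\mathbf{D}^{\dagger}_{\mathrm{rig}}(\overline{M}_{\mathbf{x}})$ is, on the critical range, dual to the Panchishkin submodule of $\overline{M}_{\mathbf{x}}^*(1)$, we get
\[
\widetilde{H}^2_f(\mathbb{Q},\overline{M}_{\mathbf{x}})\;\cong\;\widetilde{H}^1_f(\mathbb{Q},\overline{M}_{\mathbf{x}}^*(1))^*.
\]
The discussion of \S\ref{RelationBKSelmer}, which uses (NLZ) to force the two $\mathbf{H}^0$ contributions on the local $(\varphi,\Gamma)$-modules to vanish, together with \cite[Proposition 3.7]{pottharst-selmer}, identifies the right-hand side with the classical Bloch--Kato Selmer group $\mathrm{H}^1_f(\mathbb{Q},\overline{M}_{\mathbf{x}}^*(1))^*$. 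Since $\overline{M}_{\mathbf{x}}^*(1)=[M(\mathcal{F}_{k_1})\otimes M(\mathcal{G}_{k_2})](1+j)$, part (1) follows.

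For part (2) (Theorem \ref{SecondMainTheorem}), it is enough to show that for every point $\mathbf{x}=(k_1,k',\eta)\in V_1\times\{k'\}\times\mathcal{W}$ with $L_p(\mathbf{x})\neq 0$ the fibre $\mathcal{S}_{k'}\otimes\kappa(\mathbf{x})$ vanishes. Since $V_1\times\{k'\}$ is cut out of $V_1\times V_2$ by a regular closed immersion, transitivity of tensor products gives $\mathcal{S}_{k'}\otimes\kappa(\mathbf{x})=\mathcal{S}\otimes\kappa(\mathbf{x})$. Representing $\selcomof{\overline{M}}$ by a bounded complex of finite projective $A_n$-modules (Lemma \ref{PullbackLemma}) and using the universal coefficient short exact sequence, we obtain an injection
\[
\mathcal{S}\otimes\kappa(\mathbf{x})\;\hookrightarrow\;\widetilde{H}^2_f(\mathbb{Q},\overline{M}_{\mathbf{x}}),
\]
whose target is the specialised Selmer group provided by Theorem \ref{SheafTheorem}(2). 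Our assumption that $f$ and $g$ satisfy (NLZ), (BI), flatness of inertia and minimal ramification propagates (after shrinking $V_1$) to the pair $(\mathcal{F}_{k_1},g)$ for every $k_1\in V_1$, while the condition $k_1\ne k'$ imposed in \S\ref{ApplicationTo} is arranged by choosing $V_1$ not to contain $k'$. Theorem \ref{TheoremVanishing} then applies directly and gives $\widetilde{H}^2_f(\mathbb{Q},\overline{M}_{\mathbf{x}})=0$, whence $\mathcal{S}_{k'}\otimes\kappa(\mathbf{x})=0$ and $\mathbf{x}\notin\operatorname{supp}\mathcal{S}_{k'}$.

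The main technical nuisance will be bookkeeping: we must shrink $V_1$ once and for all so that (a) the Selmer sheaf is well-behaved in the sense of Theorem \ref{SheafTheorem}, (b) (NLZ) and (BI) hold at every specialisation in $V_1\times\{k'\}\times\mathcal{W}$ (using the openness of both conditions and local constancy of the residual representation), (c) $k'\notin V_1$ so that the nonvanishing factor in the explicit reciprocity law (\ref{ExplicitRecFactor}) and the irreducibility argument in \S\ref{ESinFamilies} remain available, and (d) the auxiliary integer $c$ in the construction of the Beilinson--Flach Euler system can be chosen uniformly. Once the shrinking is done, both parts of the corollary reduce to the pointwise inputs assembled in the previous sections.
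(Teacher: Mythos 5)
Your route is the paper's (Theorem \ref{SheafTheorem} plus Theorem \ref{TheoremVanishing} plus the comparison of \S\ref{RelationBKSelmer}), but there is a genuine gap at the step where you pass from the derived specialisation to the fibre of $\invs{S}$. Theorem \ref{SheafTheorem} gives a quasi-isomorphism $\selcomof{\overline{M}}\otimes^{\mathbf{L}}_{A_{\infty}}E' \simeq \selcomof{\overline{M}_{\mathbf{x}}}$, but this does not by itself identify $\invs{S}_{\mathbf{x}} = \widetilde{H}^2_f(\mbb{Q},\overline{M})\otimes_{A_{\infty}}E'$ with $\widetilde{H}^2_f(\mbb{Q},\overline{M}_{\mathbf{x}})$: in the base-change spectral sequence $\opn{Tor}^{A_{\infty}}_{-i}\bigl(\widetilde{H}^j_f(\mbb{Q},\overline{M}),E'\bigr) \Rightarrow \widetilde{H}^{i+j}_f(\mbb{Q},\overline{M}_{\mathbf{x}})$ the degree-$2$ term is contaminated by $\widetilde{H}^3_f(\mbb{Q},\overline{M})$. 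The same problem invalidates the ``universal coefficient short exact sequence'' you invoke in part (2): $A_{\infty}$ (or $A_n$) has dimension three, so the residue field of $\mathbf{x}$ has projective dimension three and there is no two-term UCT; the natural map $\invs{S}\otimes_{A_{\infty}}E' \to \widetilde{H}^2_f(\mbb{Q},\overline{M}_{\mathbf{x}})$ has kernel receiving a contribution from $\opn{Tor}_2^{A_{\infty}}\bigl(\widetilde{H}^3_f(\mbb{Q},\overline{M}),E'\bigr)$, and injectivity of this map is exactly what you need in order to deduce $\invs{S}\otimes_{A_{\infty}}E'=0$ from $\widetilde{H}^2_f(\mbb{Q},\overline{M}_{\mathbf{x}})=0$. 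So both parts of your argument silently assume control of $\widetilde{H}^3_f(\mbb{Q},\overline{M})$, which you never establish.

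The missing ingredient, which the paper supplies, is the vanishing $\widetilde{H}^3_f(\mbb{Q},\overline{M})=0$ (equivalently, that no point of $X$ lies in its support). By duality for Selmer complexes one has $\widetilde{H}^3_f(\mbb{Q},M_n)^* \cong \widetilde{H}^0_f(\mbb{Q},M_n^*(1)) \subset \opn{H}^0(\mbb{Q},M_n^*(1)) = \opn{Hom}_{G_{\mbb{Q}}}\bigl(M(\invs{F})^*, M(\invs{G})(\kappa_n+1)\bigr)$, and this vanishes because both representations are irreducible and, after shrinking $V_1, V_2$ and using $k\neq k'$, their generalised Hodge--Tate weights can never agree. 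Once this is inserted, your part (1) proceeds as in the paper ($\invs{S}_{\mathbf{x}}\cong\widetilde{H}^2_f(\mbb{Q},\overline{M}_{\mathbf{x}})$, then duality and \S\ref{RelationBKSelmer} with (NLZ) identify this with the dual Bloch--Kato group), and your part (2) reduces to Theorem \ref{TheoremVanishing} together with the standard Nakayama argument (fibre zero implies stalk zero for a coadmissible/coherent module), which is the paper's proof; the remaining bookkeeping about shrinking $V_1$, the choice of $c$, and the propagation of (NLZ) and (BI) is handled correctly in your write-up.
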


\begin{remark} \label{NobleRemark}
If we take $V_2$ to be small enough then the conclusion of part (2) of the above Corollary holds for all classical specialisations in $V_2$ (provided that the same hypotheses also hold). More precisely, if $k_2$ is a classical weight in $V_2$ then 
\[
\opn{supp}\invs{S}_{k_2} \subset \{L_p = 0 \},
\]
or to put it another way, the ``slices'' of $\invs{S}$ in the second variable are controlled by the $p$-adic $L$-function $L_p$ provided that the weight in the second variable is classical. 

The reason for this is as follows. By \cite[Lemma 2.7]{bellpadic}, if $V_2$ is small enough then any classical specialisation of $\invs{G}$ is the $p$-stabilisation of a new eigenform $h$ of level $\Gamma_1(N_2)$, and \emph{both} $p$-stabilisations of $h$ are noble. This allows us to prove the analogue of Theorem \ref{TheoremVanishing} for the pair $f, h$ of modular forms (instead of $f$ and $g$). Here we are crucially using the fact that $k \neq k'$.
\end{remark}

\begin{proof}
Let $\mathbf{x} \in X$ be an $E'$-valued point. By Theorem \ref{SheafTheorem}, we have the following isomorphism
\[
\selcomof{\overline{M}} \otimes_{A_{\infty}, x}^{\mathbf{L}} E' \xrightarrow{\sim} \selcomof{\overline{M}_{\mbf{x}}}
\]
which gives the following Tor-spectral sequence 
\[
E^{i, j}_2 \colon \opn{Tor}_{-i}^{A_{\infty}}\left(\widetilde{H}_f^j(\mbb{Q}, \overline{M}), E' \right) \Rightarrow \widetilde{H}_f^{i+j}(\mbb{Q}, \overline{M}_{\mbf{x}}).
\]
If $\mathbf{x}$ does not lie in the support of $\widetilde{H}_f^3(\mbb{Q}, \overline{M})$ then, since $\widetilde{H}_f^i(\mbb{Q}, \overline{M}) = 0$ for $i \geq 4$, we see that 
\[
\widetilde{H}_f^2(\mbb{Q}, \overline{M}) \otimes_{A_{\infty}, x} E' \cong \widetilde{H}_f^2(\mbb{Q}, \overline{M}_{\mbf{x}}).
\]
If $\mathbf{x} = (k_1, k_2, j)$ where $k_1, k_2, j$ are integers satisfying $1 \leq k_2 + 1 \leq j \leq k_1$, then by the discussion in section $\ref{RelationBKSelmer}$ we see that the right hand side of the above isomorphism is isomorphic to the dual of the Bloch--Kato Selmer group for the representation $\overline{M}_{\mathbf{x}}^*(1)$. This proves part $1$ assuming that $\mathbf{x}$ does not lie in the support of $\widetilde{H}_f^3(\mbb{Q}, \overline{M})$. 

Now let $\mbf{x} = (k_1, k', \eta) \in X$ with $k_1$ and $\eta$ not necessarily classical. Assume that $L_p(\mbf{x}) \neq 0$. Then we can apply Theorem \ref{TheoremVanishing}, which says that $\widetilde{H}_f^2(\mbb{Q}, \overline{M}_{\mbf{x}}) = 0$. 

Let $\ide{m}$ denote the kernel of the map $A_{\infty} \to E'$ and let $A_{\infty, \ide{m}}$ denote the localisation of $A_{\infty}$ at $\ide{m}$. Since $\invs{S} = \widetilde{H}_f^2(\mbb{Q}, \overline{M})$ is a coadmissible module
\[
\invs{S} \otimes_{A_{\infty}} A_{\infty, \ide{m}}
\]
is a finitely generated $A_{\infty, \ide{m}}$-module; so by Nakayama's lemma we must have $\invs{S} \otimes_{A_{\infty}} A_{\infty, \ide{m}} = 0$. But this precisely means that $\mbf{x}$ is not in the support of $\invs{S}$ (because the set of points where the stalk of a coherent sheaf is non-zero is automatically closed). This proves part 2 for the points that don't lie in the support of $\selmergrp{3}{\overline{M}}$.

But since $M(\invs{F})$ and $M(\invs{G})$ are irreducible, we have $\widetilde{H}_f^3(\mbb{Q}, \overline{M}) = 0$. Indeed by duality  
\[
\widetilde{H}_f^3(\mbb{Q}, M_n)^* \cong \widetilde{H}_f^0(\mbb{Q}, M_n^*(1)) \subset \opn{H}^0(\mbb{Q}, M_n^*(1)) 
\]
and $\opn{H}^0(\mbb{Q}, M_n^*(1)) = \opn{Hom}_{G_{\mbb{Q}}}(M(\invs{F})^*, M(\invs{G})(\kappa_n + 1))$. If this group is non-zero then because $M(\invs{F})^*$ and $M(\invs{G})(\kappa_n +1)$ are irreducible, they must be isomorphic as representations. But (taking $V_1$ and $V_2$ to be small enough) the generalised Hodge--Tate weights of these representations can never be the same (because $k \neq k'$). 
\end{proof}

\subsection{Proof of Theorem \ref{SheafTheorem}}

We start by proving the following lemma.

\begin{lemma} \label{KeyLemma}
Suppose that $V_1$ and $V_2$ are small enough so that $T_n^{I_v}$ is a flat $A^{\circ}_n$-module, and suppose that the ``minimally ramified'' hypothesis is satisfied. Then for any maximal ideal $\ide{m}$ in $A_n$, the natural map
\[
(M_n^{I_v})/\ide{m} \rightarrow (M_n/\ide{m})^{I_v} 
\] 
is an isomorphism.
\end{lemma}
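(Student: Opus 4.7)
The plan is to reduce the statement to the integral setting involving $T_n$ and $A_n^\circ$, and then combine the flatness of inertia with the minimally ramified hypothesis to show that the quotient $Q \defeq T_n/T_n^{I_v}$ is itself flat over $A_n^\circ$. Once flatness of the quotient is established, the desired isomorphism follows from left-exactness of the invariants functor applied to the base-changed short exact sequence, together with a dimension count.

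First I would reduce to the integral setup. Since the localization $A_n^\circ \to A_n$ is flat and taking continuous $I_v$-invariants commutes with flat base change for finitely generated representations of profinite groups, we have $M_n^{I_v} = T_n^{I_v} \otimes_{A_n^\circ} A_n$. Writing $\mathfrak{m}^\circ$ for the maximal ideal of $A_n^\circ$ containing $\mathfrak{m} \cap A_n^\circ$ and a uniformiser of $E$, with residue field $k$, the statement then reduces to proving that the natural map
\[
T_n^{I_v} \otimes_{A_n^\circ} E' \longrightarrow \bigl(T_n \otimes_{A_n^\circ} E'\bigr)^{I_v}
\]
is an isomorphism.

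Next I would establish that $Q$ is flat over $A_n^\circ$. By the local criterion for flatness applied after localising at $\mathfrak{m}^\circ$, this amounts to showing that the reduction map $T_n^{I_v}/\mathfrak{m}^\circ \to T_n/\mathfrak{m}^\circ$ is injective. Faithful flatness of $\bar{\mathbb{F}}_p$ over $k$ allows us to check this after further base change, giving a canonical chain
\[
T_n^{I_v} \otimes_{A_n^\circ} \bar{\mathbb{F}}_p \longrightarrow \bigl(T_n \otimes_{A_n^\circ} \bar{\mathbb{F}}_p\bigr)^{I_v} \hookrightarrow \bigl((T_n \otimes_{A_n^\circ} \bar{\mathbb{F}}_p)^{\mathrm{ss}}\bigr)^{I_v}.
\]
The source has $\bar{\mathbb{F}}_p$-dimension $r \defeq \mathrm{rank}_{A_n^\circ} T_n^{I_v}$ by flatness, while the rightmost term has dimension $r$ by the minimally ramified hypothesis — which, after possibly shrinking $V_1$ and $V_2$ further, extends from the single point $x_0$ to every closed point of $Y_n$ via Lemma \ref{FlatnessLemma}(2). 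An elementary argument with the Jordan--H\"older filtration of $T_n \otimes \bar{\mathbb{F}}_p$ then identifies the image of the composition with the full semisimplified invariant subspace, forcing the first arrow to be injective and so giving the required $\mathrm{Tor}$-vanishing.

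With flatness of $Q$ established, base-changing the short exact sequence $0 \to T_n^{I_v} \to T_n \to Q \to 0$ along $A_n^\circ \to E'$ stays exact, producing an injection $T_n^{I_v} \otimes E' \hookrightarrow T_n \otimes E'$ whose image is visibly contained in the $I_v$-invariants. Surjectivity onto $(T_n \otimes E')^{I_v}$ then follows from a dimension count: the source has $E'$-dimension $r$ by flatness, while the target is bounded above by $r$ using the minimally ramified constraint on the residual representation together with the injectivity just proved. The main obstacle is the characteristic-$p$ dimension argument: establishing that the image of $T_n^{I_v} \otimes \bar{\mathbb{F}}_p$ really does exhaust the semisimplified invariants requires careful bookkeeping in the residual representation, and crucially depends on propagating the minimally ramified condition from $x_0$ to a full neighborhood in $Y_n$, which is the substance of Lemma \ref{FlatnessLemma}.
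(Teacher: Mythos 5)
Your overall skeleton (prove that $Q\defeq T_n/T_n^{I_v}$ is flat at the relevant maximal ideal of $A_n^{\circ}$, then base-change the short exact sequence and count dimensions) could in principle yield the lemma, but the step on which everything rests is not actually proved. You need the injectivity of $T_n^{I_v}\otimes_{A_n^{\circ}}k \to (T_n\otimes_{A_n^{\circ}}k)^{I_v}$, i.e. $\opn{Tor}_1^{A_n^{\circ}}(Q,k)=0$, and you try to extract it from a dimension count: source of dimension $r$, and $((T_n\otimes\bar{\mbb{F}}_p)^{\mathrm{ss}})^{I_v}$ also of dimension $r$. This cannot work as stated. First, there is no natural map from a representation to its semisimplification, only the inequality $\opn{dim} V^{I_v}\leq \opn{dim}(V^{\mathrm{ss}})^{I_v}$, so ``the image of the composition'' in the semisimplified invariants is not even defined. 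Second, and more seriously, knowing that the source has dimension $r$ and that an \emph{upper bound} for the target equals $r$ says nothing about the kernel of the first arrow: a priori the reduction map could kill part of $T_n^{I_v}\otimes\bar{\mbb{F}}_p$; what is at stake is exactly whether $T_n^{I_v}\cap \ide{n}T_n=\ide{n}T_n^{I_v}$ for the residual maximal ideal $\ide{n}$, i.e. saturation of the invariants, and this is not a consequence of equality of dimensions. The paper does not deduce this injectivity from dimensions: it takes the identity $(\ide{n}T)^{I_v}=\ide{n}T^{I_v}$ as the input giving the injections (\ref{firsteq})--(\ref{thirdeq}), and then uses flatness and constancy of dimensions in the \emph{opposite} direction, namely to upgrade those injections to isomorphisms and to kill the cokernel $W$ after inverting $\varpi$. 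So your ``elementary Jordan--H\"older argument'' is precisely the missing content, and as written the claim that the image exhausts the semisimplified invariants is the statement you are trying to prove.

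There is a second, smaller gap in how you propagate the minimally ramified condition. You assert that it extends ``to every closed point of $Y_n$ via Lemma \ref{FlatnessLemma}(2)'', but that lemma only moves the hypothesis in the cyclotomic direction, i.e. to points $(k,k',\eta)$ with the classical weights of $f$ and $g$ fixed. Since $\ide{m}$ is an arbitrary maximal ideal of $A_n$, you must also move in the $V_1\times V_2$ directions. In the paper this is done by first reducing (using that characters of $\Gamma$ are trivial on $I_v$ for $v\neq p$) to the slice $V_1\times V_2\times\{0\}$, and then invoking the constancy of the mod-$p$ representation of the Coleman families after shrinking $V_1,V_2$, combined with the constancy of $\opn{dim}\, T^{I_v}/\ide{n}T^{I_v}$ forced by flatness, so that the numerical equality known at $\ide{n}_0$ propagates to every residual maximal ideal $\ide{n}$. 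That mechanism, which is also what your characteristic-zero upper bound $\opn{dim}_{E'}(M_n/\ide{m})^{I_v}\leq r$ ultimately relies on, is absent from your argument.
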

\begin{proof}
As explained previously (see Lemma \ref{FlatnessLemma} following the ``minimally ramified'' assumption), the result is invariant under twisting in the third variable, so we may assume that we're working over the space
\[
V = V_1 \times V_1 \times \{0\}.
\]
Shrinking $V_1$ and $V_2$ if necessary, we may assume that $V$ is an irreducible affinoid space over $E$. Let $R$ denote its global sections - this is an integral domain. Let $I = I_v$ be an inertia group and set $M = M(\invs{F})^* \hatot M(\invs{G})^*$, thought of as a representation over $R$. It is enough to prove that the natural map 
\[
(M^I)/\ide{m} \rightarrow (M/\ide{m})^I
\]
is an isomorphism, for all maximal ideals $\ide{m}$ of $R$. 

Let $|\cdot|_E$ denote the norm on $E$ (normalised so that $|p|_E = 1/p$) and for any finite field extension $E'$ of $E$, let $|\cdot|_{E'}$ denote the unique norm on $E'$ extending $|\cdot|_E$. Since $R$ is a reduced affinoid algebra it comes with a Banach norm given by
\[
||f|| := \opn{sup}_{\ide{m}}|f \!\!\! \mod{\ide{m}}|_{k(\ide{m})}
\]
where $k(\ide{m})$ denotes the residue field of $\ide{m}$.

Let $R^{\circ}$ denote the unit ball inside $R$ and from now on $E'$ will denote the residue field of a maximal ideal $\ide{m}$ inside $R$. If $R \to E'$ is the continuous surjective homomorphism corresponding to $\ide{m}$ then, by the description of $||\cdot||$ above, we see that $R^{\circ}$ is mapped into the unit ball $\ordd_{E'}$ inside $E'$. Let $\ide{p} = \ide{m} \cap R^{\circ}$. We also let $\ide{m}_0$ denote the maximal ideal corresponding to the point $(k, k', 0)$, and let $\ide{p}_0 = \ide{m}_0 \cap R^{\circ}$. 

Since $\ordd_E$ embeds isometrically into $R^{\circ}$ we have 
\[
\ordd_E \hookrightarrow R^{\circ}/\ide{p} \hookrightarrow \ordd_{E'}.
\]  
Now $R^{\circ}/\ide{p}$ is an integral domain with fraction field $E'$, so $R^{\circ}/\ide{p}$ is an $\ordd_E$-algebra that is finite free of rank $[E':E]$ as an $\ordd_E$-module ($R^{\circ}/\ide{p}$ is torsion-free and $\ordd_E$ is a principal ideal domain). Let $\varpi'$ be a uniformiser of $E'$ and set $J = (\varpi') \cap R^{\circ}/\ide{p}$. This corresponds to a maximal ideal of $R^{\circ}$ which we will denote by $\ide{n}$. 

Let $k$ and $k'$ denote the residue fields of $\ordd_E$ and $\ordd_{E'}$ respectively. Then we have 
\[
k \hookrightarrow R^{\circ}/\ide{n} \hookrightarrow k'
\]
Take $T \subset M$ to be a Galois stable $R^{\circ}$-lattice (which exists by compactness). The representation $T/\ide{p}T \otimes \ordd_{E'}$ is a Galois stable lattice inside $M/\ide{m}M$ and the ``mod $p$ representation'' attached to $M/\ide{m}M$ is 
\[
\left( \left( (T/\ide{p}T \otimes \ordd_{E'}) \otimes \ordd_{E'}/\varpi' \right) \otimes \bar{\mbb{F}}_p \right)^{\mathrm{ss}} = \left( T/\ide{n}T \otimes_{R^{\circ}/\ide{n}} \bar{\mbb{F}}_p \right)^{\mathrm{ss}}
\]
where ss stands for semi-simplification. Since the ``mod $p$ representation'' for a Coleman family is constant, we have
\[
\left( T/\ide{n}T \otimes_{R^{\circ}/\ide{n}} \bar{\mbb{F}}_p \right)^{\operatorname{ss}} \cong \left( T/\ide{n}_0T \otimes_{R^{\circ}/\ide{n}_0} \bar{\mbb{F}}_p \right)^{\operatorname{ss}}.
\]
Here we are using the property that if $\nu$ and $\nu'$ are two representations then $(\nu \otimes \nu')^{\mathrm{ss}} = (\nu^{\mathrm{ss}} \otimes (\nu')^{\mathrm{ss}})^{\operatorname{ss}}$. Now consider the short exact sequence
\begin{equation} 
\exactseq{\ide{n}T}{T}{T/\ide{n}T} \nonumber
\end{equation}
Taking inertia invariants and using the fact that $(\ide{n}T)^I = \ide{n}T^I$, we see that
\begin{equation} \label{firsteq}
T^I/\ide{n}T^I \hookrightarrow \left(T/\ide{n}T\right)^I.
\end{equation}
Similarly we have two more injective maps 
\begin{eqnarray} 
T^I/\ide{p}T^I & \hookrightarrow & \left(T/\ide{p}T\right)^I \label{secondeq} \\
\left(T/\ide{p}T\right)^I/J & \hookrightarrow & \left(T/\ide{n}T\right)^I. \label{thirdeq}
\end{eqnarray}
The map in (\ref{firsteq}) factors as (\ref{secondeq}) modulo $J$ followed by (\ref{thirdeq}), i.e. it factors as
\[
T^I/\ide{n}T^I \rightarrow \left(T/\ide{p}T\right)^I/J \hookrightarrow \left(T/\ide{n}T\right)^I
\]
and the first map is injective. We then have 
\[
T^I/\ide{n}T^I \otimes_{R^{\circ}/\ide{n}} \bar{\mbb{F}}_p \hookrightarrow (T/\ide{n}T)^I \otimes \bar{\mbb{F}}_p \hookrightarrow (T/\ide{n}T \otimes \bar{\mbb{F}}_p)^I
\]
and we obtain the following sequence of inequalities:
\begin{eqnarray}
\opn{dim}_{R^{\circ}/\ide{n}}T^I/\ide{n}T^I & \leq & \opn{dim}_{R^{\circ}/\ide{n}}(T/\ide{n}T)^I \nonumber \\
 & \leq & \opn{dim}_{\bar{\mbb{F}}_p}\left(T/\ide{n}T \otimes \bar{\mbb{F}}_p \right)^I \nonumber \\
 & \leq & \opn{dim}_{\bar{\mbb{F}}_p}\left((T/\ide{n}T \otimes \bar{\mbb{F}}_p)^{\operatorname{ss}} \right)^I \nonumber \\
 & = & \opn{dim}_{\bar{\mbb{F}}_p}\left((T/\ide{n}_0T \otimes \bar{\mbb{F}}_p)^{\operatorname{ss}} \right)^I. \label{inequalities}
\end{eqnarray}
All of these inequalities become equalities when $\ide{n} = \ide{n}_0$ by the ``minimially ramified'' assumption. 

Now we use the fact that $T^I$ is a flat $R^{\circ}$-module. In particular, the localisation $(T^I)_{\ide{n}}$ is free and so
\[
\opn{dim}_{R^{\circ}/\ide{n}}T^I/\ide{n} = \opn{dim}_{R^{\circ}/\ide{n}}(T^I)_{\ide{n}}/\ide{n} = \opn{dim}_{\invs{K}}T^I \otimes_{R^{\circ}} \invs{K}
\]
where $\invs{K}$ denotes the fraction field of $R^{\circ}$ (recall that $R^{\circ}$ is an integral domain). Hence the quantity $\opn{dim}_{R^{\circ}/\ide{n}}T^I/\ide{n}$ is constant and so all inequalities in (\ref{inequalities}) become equalities, for general $\ide{n}$. This implies that the map
\[
T^I/\ide{n}T^I \hookrightarrow \left(T/\ide{p}T\right)^I/J
\]
is an isomorphism.  

Consider the exact sequence
\[
\exactseq{T^I/\ide{p}T^I}{(T/\ide{p}T)^I}{W}
\]
where $W$ is a finitely generated $R^{\circ}/\ide{p}$-module. Then we have $W/JW = 0$, which implies that $W[\frac{1}{\varpi}] = 0$. Indeed, $W$ is a finitely generated $\ordd_E$-module and $\varpi \in J$. Since inverting $\varpi$ commutes with taking inertia invariants, we localise the above sequence and we see that the natural map
\[
M^I/\ide{m}M^I \rightarrow \left(M/\ide{m}M\right)^I
\]
is an isomorphism, as required.
\end{proof}

We are now in a position to prove Theorem \ref{SheafTheorem}.

\begin{proof}[Proof of Theorem \ref{SheafTheorem}]
It is clear from the definition of $A_n$ as the global sections of $Y_n$ that $A_n$ is a flat $A_{\infty}$-module. So for the first part we need to check that taking cohomology and constructing the local conditions both commute with $- \otimes_{A_{\infty}} A_n$. 

By part 2 in Lemma \ref{PullbackLemma} and finiteness, we have 
\[
\rcont{G}{\overline{M}} \otimes_{A_{\infty}} A_n \cong \rcont{G}{M_n}
\]
for $G = G_{\Sigma}$ or $G = G_v$. Similarly $\overline{M}^{I_v}$ is a family of representations over $X$ of the group $G_v/I_v$ and, after shrinking $V_1$ and $V_2$, we can assume that $\overline{M}^{I_v}$ is a flat family, in the sense that 
\[
\Gamma(Y_n, \overline{M}^{I_v}) = M_n^{I_v}
\]
is a flat $A_n$-module, for all $n$. The pair $(G_v/I_v, M_n^{I_v})$ satisfies ``hypothesis A'' in \cite{pottharst-selmer}, so in fact (by the same proof) the conclusion in part 2 of Lemma \ref{PullbackLemma} holds for $G_v/I_v$ and $\overline{M}^{I_v}$ in place of $G_v$ and $\overline{M}$, i.e. 
\[
\mathbf{L}\iota_n^* \rcont{G_v/I_v}{\overline{M}^{I_v}} \cong \rcont{G_v/I_v}{M_n^{I_v}}.
\]
Again by finiteness this implies that 
\[
\rcont{G_v/I_v}{\overline{M}^{I_v}} \otimes_{A_{\infty}} A_n \cong \rcont{G_v/I_v}{M_n^{I_v}}. 
\]

To complete the proof of part 1, we just need to check that the local condition at $p$ commutes with base change to $A_n$. By \cite[Theorem 2.5]{pottharst-selmer}, the Herr complex $\invs{C}^{\bullet}_{\varphi, \gamma}(\Ff{+}{o}\ddrig{\overline{M}})$ is quasi-isomorphic to a complex $C^{\bullet}$ of coadmissible $A_{\infty}$-modules. Since $A_n$ is a flat $A_{\infty}$-module we have
\[
C^{\bullet} \hatot_{A_{\infty}} A_n \cong C^\bullet \hatot^{\mathbf{L}}_{A_{\infty}} A_n \cong \invs{C}^{\bullet}_{\varphi, \gamma}(\Ff{+}{o}\ddrig{M_n}).
\]
By finiteness of cohomology, this implies that the natural map 
\[
\rcont{\mbb{Q}_p}{\Ff{+}{o}{\ddrig{\overline{M}}}} \otimes_{A_{\infty}} A_n \rightarrow \rcont{\mbb{Q}_p}{\Ff{+}{o}{\ddrig{M_n}}}
\]
is an isomorphism. 

Let $\mbf{x}$ be an $E'$-valued point in $Y_n$. We now restrict ourselves to the setting where we have a representation $M_n$ over $A_n$ which comes from a $A_n^{\circ}$-lattice $T_n \subset M_n$. Shrinking $V_1$ and $V_2$ if necessary, we can assume that $T_n^{I_v}$ is a flat $A_n^{\circ}$-module for all $v \in \Sigma$ not equal to $p$. Furthermore, by Lemma \ref{KeyLemma}, specialisation at $\mbf{x}$ commutes with taking inertia invariants. The result then follows from \cite[\S 3.4]{pottharst-selmer} (see in particular equation (3.3)). 
\end{proof}


\appendix

\section{Justification of hypotheses} \label{Appendix}

In this appendix we give justifications for the hypotheses made throughout the paper.

\subsection{The ``Big Image'' hypothesis} \label{BIJustification}

Let $f$ and $g$ be normalised new cuspidal eigenforms of levels $\Gamma_1(N_1)$ and $\Gamma_1(N_2)$, weights $k+2, k'+2$ and characters $\varepsilon_f$ and $\varepsilon_g$ respectively. Let $L_f$ and $L_g$ be the coefficient fields of $f$ and $g$. Assume that $f$ and $g$ are not of CM type and that $f$ is not a Galois twist of $g$, i.e. there doesn't exist an embedding $\gamma: L_f \to \mbb{C}$ and a Dirichlet character $\chi$ such that $f^{\gamma} = g \otimes \chi$.

Let $V$ be a $p$-adic representation of $G_{\mbb{Q}}$ with coefficients in a finite extension $E$ of $\mbb{Q}_p$. Recall the ``Big Image'' hypothesis from section \ref{AVanishingResult}:
\begin{itemize}
\item[(BI)] There exists an element $\sigma \in \Gal(\bar{\mbb{Q}}/\mbb{Q}(\mu_{p^{\infty}}))$ such that $V/(\sigma - 1)V$ is one-dimensional (over $E$).
\end{itemize}

Let $\ide{p}$ be a prime in the compositum $L = L_f L_g$ and consider the representation $V = M_{L_{\ide{p}}}(f)^* \otimes M_{L_{\ide{p}}}(g)^*$. Then it is shown in \cite{LoefflerBI} that for all but finitely many primes $\ide{p}$, the ``Big Image'' hypothesis holds for the representation $V$. Let $\ide{p}$ be such a prime (lying above a prime $p \geq 7$ say) and suppose that we have a Coleman family $\invs{F}$ defined over $V_1$ passing through a $p$-stabilisation of $f$. Then it is not immediately obvious whether we can shrink $V_1$ such that the ``Big Image'' hypothesis holds for the representation
\[
M(\invs{F}_{k_1})^* \otimes M(g)^*
\]
for all specialisations $k_1 \in V_1$, even if we were to restrict $k_1$ to just classical weights. In this section we show that this is indeed possible by using the fact that the mod $p$ reduction of the above representation is constant, for $V_1$ small enough.

\begin{lemma} \label{teichmuller}
Let $G$ be a profinite group, let $\rho \colon G \to \GL_n(\OO_E)$ be a continuous representation, and let
$\bar{\rho} \colon G \to \GL_n(k_E)$ be the corresponding
residual representation.  Suppose there exists $g_0 \in G$
so that $\bar{\rho}(g_0)$ has eigenvalue $1$ with multiplicity
one.  Then there exists $g \in G$ such that $\rho(g)$ has
eigenvalue $1$ with multiplicity one.
\end{lemma}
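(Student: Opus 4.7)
The plan is to first use Hensel's lemma to lift the hypothesis on $\bar{\rho}(g_0)$ to a statement about a $1$-unit eigenvalue of $\rho(g_0)$, and then to \emph{rotate} this eigenvalue to exactly $1$ by raising $g_0$ to a suitable $\hat{\mbb{Z}}$-power inside the procyclic closure $\overline{\langle g_0 \rangle} \subset G$.

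First I would consider the characteristic polynomial $\chi(T) = \det(T \cdot I - \rho(g_0)) \in \mathcal{O}_E[T]$. The hypothesis gives $\bar{\chi}(T) = (T-1)\bar{h}(T)$ in $k_E[T]$ with $\bar{h}(1) \in k_E^{\times}$, and Hensel's lemma lifts this coprime factorisation to $\chi(T) = (T - \lambda) h(T)$ in $\mathcal{O}_E[T]$ with $\lambda \in 1 + \mathfrak{m}_E$. Thus $\lambda$ is a simple eigenvalue of $\rho(g_0)$ lying in the pro-$p$ group $1 + \mathfrak{m}_E$ (using $p > 2$), while every other eigenvalue $\mu$ of $\rho(g_0)$ (over $\bar{E}$) satisfies $\bar{\mu} \neq 1$ in $\bar{k}$.

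Next I would use that the procyclic closure $\overline{\langle g_0 \rangle} \subset G$ is a continuous quotient of $\hat{\mbb{Z}} = \prod_{\ell} \mbb{Z}_{\ell}$, so that for any $a \in \hat{\mbb{Z}}$ the element $g_0^a \in G$ makes sense and satisfies $\rho(g_0^a) = \rho(g_0)^a$ by continuity. Since the characteristic polynomial of $\rho(g_0)^n$ equals $\prod_i (T - \lambda_i^n)$ for each integer $n$, passing to the limit along integers $n \to a$ shows that the eigenvalues of $\rho(g_0)^a$ are precisely the $a$-th powers $\{\lambda_i^a\}$ of the eigenvalues of $\rho(g_0)$ (with the same multiplicities). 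I would then take $a = (0, 1, 1, \ldots) \in \prod_{\ell} \mbb{Z}_{\ell}$ and set $g = g_0^a$. Since $1 + \mathfrak{m}_E$ is pro-$p$, the continuous map $b \mapsto \lambda^b$ factors through the $p$-component $\mbb{Z}_p$, so $\lambda^a = \lambda^0 = 1$. For any other eigenvalue $\mu$, working over a finite extension $E'/E$ containing $\mu$, I would decompose $\mu = \tau(\bar{\mu}) \cdot u$ where $\tau(\bar{\mu})$ is the Teichm\"{u}ller lift (of some order $d$ coprime to $p$, since $\bar{k}$ has characteristic $p$) and $u \in 1 + \mathfrak{m}_{E'}$; then $u^a = 1$ as above since $a_p = 0$, while $a \equiv 1 \pmod{d}$ (using $p \nmid d$ and $a_{\ell} = 1$ for $\ell \neq p$) gives $\tau(\bar{\mu})^a = \tau(\bar{\mu}) \neq 1$. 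Hence $\mu^a \neq 1$, and $\rho(g)$ has $1$ as an eigenvalue of multiplicity exactly one.

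The main subtlety I expect is to handle the case where $\lambda$ has infinite multiplicative order (which forces the use of genuine $\hat{\mbb{Z}}$-powers rather than integer powers); this is accommodated naturally by the procyclic closure argument and the observation that $\lambda^a$ depends only on the $p$-adic component $a_p$, while $\tau(\bar{\mu})^a$ depends only on the prime-to-$p$ components of $a$, so the two conditions can be imposed independently.
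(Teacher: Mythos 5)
Your proof is correct and is essentially the paper's argument: the element $g = g_0^a$ with $a = (0,1,1,\dots) \in \hat{\ZZ}$ is exactly the paper's $g = \lim_{n\to\infty} g_0^{p^{n!}}$, and your computation $\lambda_i \mapsto \lambda_i^a$ is precisely the statement that the eigenvalues of $\rho(g)$ are the Teichm\"{u}ller lifts of those of $\bar{\rho}(g_0)$. You simply spell out the details (Hensel factorisation, continuity of $\hat{\ZZ}$-powers, the splitting $\mu = \tau(\bar{\mu})u$) that the paper leaves as a routine check; note also that $1+\mathfrak{m}_E$ is pro-$p$ for every $p$, so the parenthetical appeal to $p>2$ is unnecessary.
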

\begin{proof}
Take $g=\lim_{n \to \infty} (g_0)^{p^{n!}}$. Indeed, to show this sequence converges in $G$ it is enough to show that its image in any finite quotient of $G$ is eventually constant, and this is a routine check. Furthermore, the
eigenvalues of $\rho(g)$ are the Teichm\"{u}ller lifts of
the eigenvalues of $\bar{\rho}(g_0)$, so $1$ is an eigenvalue for $\rho(g)$ with multiplicity one.
\end{proof}

In particular, the above lemma can be applied to the tensor
product of a pair of Galois representations whose
residual representations are ``good'' in the following sense.

\begin{definition}
For $i=1, 2$ let $\sigma_i: G_{\mbb{Q}} \to \opn{GL}_2(\bar{\mbb{F}}_p)$ be (continuous) Galois representations. We say the pair $(\sigma_1, \sigma_2)$ is \emph{good} if
\begin{itemize}
\item $\chi_i \defeq \opn{det} \circ \sigma_i : G_{\mbb{Q}(\mu_{p^{\infty}})} \to \bar{\mbb{F}}_p^{\times}$ is a Dirichlet character of conductor $N_i$, and $p$ doesn't divide the order of the group $(\mbb{Z}/N\mbb{Z})^{\times}$, where $N$ is the lowest common multiple of $N_1$ and $N_2$.
\item There exists an element $u \in (\mbb{Z}/N\mbb{Z})^{\times}$ such that the group
\[
\{(\sigma_1(g), \sigma_2(g)) \in \opn{GL}_2(\bar{\mbb{F}}_p) \times \opn{GL}_2(\bar{\mbb{F}}_p) : g \in G_{\mbb{Q}(\mu_{p^{\infty}})} \}
\]
contains the subgroup generated by $\opn{SL}_2(\mbb{F}_p) \times \opn{SL}_2(\mbb{F}_p)$ and the element 
\[
\left( \tbyt{1}{0}{0}{\chi_1(u)}, \tbyt{1}{0}{0}{\chi_2(u)} \right).
\]
\end{itemize} 
If we want to specify the element $u$ we also call $(\sigma_1, \sigma_2, u)$ good. 
\end{definition}

\begin{lemma} \label{SMainTheorem}
Assume $p \ge 7$ and let $E$ be a finite extension of $\mbb{Q}_p$. Let $\rho_1, \rho_2 \colon G_{\QQ} \to \GL_2(\OO_E)$ be two $p$-adic representations, and let $\bar{\rho}_1$, $\bar{\rho}_2$ denote the corresponding residual representations. If there exists an element $u \in (\ZZ/N\ZZ)^{\times}$ such that $(\bar{\rho}_1, \bar{\rho}_2, u)$ is good and $\chi_1(u) \chi_2(u) \neq 1$, then $\rho_1 \otimes_E \rho_2$ satisfies condition (BI).
\end{lemma}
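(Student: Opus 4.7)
The strategy is to apply Lemma~\ref{teichmuller} to the restriction of $\rho_1 \otimes \rho_2$ to $G \defeq G_{\mbb{Q}(\mu_{p^\infty})}$. This reduces the problem to finding $g_0 \in G$ such that $(\bar\rho_1 \otimes \bar\rho_2)(g_0)$ has $1$ as an eigenvalue of multiplicity exactly one. Indeed, the element $g = \lim_n g_0^{p^{n!}}$ produced by Lemma~\ref{teichmuller} lies in $G$ (a closed subgroup of $G_\mbb{Q}$), and its eigenvalues on $V \defeq \rho_1 \otimes \rho_2$ are the Teichm\"uller lifts of those of $(\bar\rho_1 \otimes \bar\rho_2)(g_0)$. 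In particular $1$ is then a simple eigenvalue of $(\rho_1 \otimes \rho_2)(g)$, so $V/(g-1)V$ is one-dimensional, which is condition (BI).

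By the goodness hypothesis, the image of $(\bar\rho_1, \bar\rho_2)|_G$ contains the pair $(A_1, A_2) \defeq (\opn{diag}(1, \chi_1(u)), \opn{diag}(1, \chi_2(u)))$ together with all of $\opn{SL}_2(\mbb{F}_p) \times \opn{SL}_2(\mbb{F}_p)$, and hence all products $(s_1 A_1, s_2 A_2)$ with $s_i \in \opn{SL}_2(\mbb{F}_p)$. The tensor $A_1 \otimes A_2$ is diagonal with entries $1, \chi_1(u), \chi_2(u), \chi_1(u)\chi_2(u)$. In the generic case where $\chi_1(u) \neq 1$ and $\chi_2(u) \neq 1$, the hypothesis $\chi_1(u)\chi_2(u) \neq 1$ ensures that $1$ appears exactly once, and we take $g_0$ to be any preimage of $(A_1, A_2)$.

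In the remaining case, exactly one of $\chi_1(u), \chi_2(u)$ is trivial---both cannot be, since then $\chi_1(u)\chi_2(u) = 1$. By symmetry assume $\chi_1(u) = 1$ and $\chi_2(u) \neq 1$, so $A_1 = I$. For $\alpha \in \mbb{F}_p^\times$ with $\alpha \neq \pm 1$, take $s_1 = \opn{diag}(\alpha, \alpha^{-1})$ and $s_2 = \tbyt{\alpha^{-1}}{1}{0}{\alpha}$; then $s_2 A_2$ is upper triangular with eigenvalues $\alpha^{-1}$ and $\alpha\chi_2(u)$, so $s_1 \otimes (s_2 A_2)$ has eigenvalues $1, \alpha^2 \chi_2(u), \alpha^{-2}, \chi_2(u)$. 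Requiring $\alpha^2 \neq 1$ and $\alpha^2 \neq \chi_2(u)^{-1}$ excludes at most four elements of $\mbb{F}_p^\times$, leaving at least $p - 5 \geq 2$ valid choices since $p \geq 7$; any such $\alpha$ yields a $g_0$ of the desired form.

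The main obstacle is precisely the degenerate case above, where the diagonal element $(A_1, A_2)$ alone fails to produce a simple $1$-eigenvalue on the tensor product; one must use the $\opn{SL}_2(\mbb{F}_p) \times \opn{SL}_2(\mbb{F}_p)$ part of the image to perturb into a suitable element. The hypothesis $p \geq 7$ enters exactly to guarantee enough room in $\mbb{F}_p^\times$ to avoid the resulting exceptional loci.
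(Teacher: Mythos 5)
Your proof is correct and follows essentially the same route as the paper: apply Lemma \ref{teichmuller} to $\rho_1 \otimes \rho_2$ restricted to $G_{\mbb{Q}(\mu_{p^\infty})}$, and use the goodness hypothesis together with a counting argument in $\mbb{F}_p^{\times}$ (which is where $p \geq 7$ enters) to produce a residual element having $1$ as a simple eigenvalue. The only difference is cosmetic: the paper avoids your case split by the single uniform choice of image $\left( \tbyt{x}{0}{0}{x^{-1}\chi_1(u)}, \tbyt{x^{-1}}{0}{0}{x\chi_2(u)} \right)$ with $x \in \mbb{F}_p^{\times}$ chosen so that $x^{-2}\chi_1(u) \neq 1$ and $x^{2}\chi_2(u) \neq 1$, which yields eigenvalues $\{1, x^{-2}\chi_1(u), x^{2}\chi_2(u), \chi_1(u)\chi_2(u)\}$ and so handles the degenerate case $\chi_i(u) = 1$ simultaneously.
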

\begin{proof}
Since $p \ge 7$, there exists
$x \in \mathbb{F}_p^{\times}$ such that
$x^{-2} \chi_1(u)$ and $x^2 \chi_2(u)$
are different from $1$. Since $(\bar{\rho}_1, \bar{\rho}_2, u)$ is a good triple, there exists an element $g_0 \in G_{\QQ(\mu_{p^\infty})}$ such that
\[ \bar{\rho}_1(g_0) = \begin{pmatrix} x & 0 \\ 0 & x^{-1} \chi_1(u) \end{pmatrix} \, \quad \bar{\rho}_2(g_0) = \begin{pmatrix} x^{-1} & 0 \\ 0 & x \chi_2(u) \end{pmatrix} \,. \]
The eigenvalues of $\bar{\rho}_1(g_0) \otimes \bar{\rho}_2(g_0)$ are $\{1,x^{-2} \chi_1(u),x^2 \chi_2(u),\chi_1(u)\chi_2(u)\}$, so the eigenvalue
$1$ has multiplicity one and we may apply Lemma \ref{teichmuller}.
\end{proof}

\subsubsection{Examples of good triples $(\sigma_1, \sigma_2, u)$}

Returning to the situation at the start of section \ref{BIJustification}, let $\ide{p}$ be a prime of the compositum $L = L_fL_g$ lying above a prime $p \geq 7$. We have Galois representations
\begin{eqnarray}
\rho_{f, \ide{p}}^* \colon G_{\mbb{Q}}  \to  \opn{GL}(M_{L_{\ide{p}}}(f)^*) \nonumber \\
\rho_{g, \ide{p}}^* \colon  G_{\mbb{Q}}  \to  \opn{GL}(M_{L_{\ide{p}}}(g)^*) \nonumber 
\end{eqnarray}
which satisfy $\opn{det} \circ \rho_{f, \ide{p}}^* = \varepsilon_f^{-1} \chi_{\mathrm{cycl}}^{1+k}$ and $\opn{det} \circ \rho_{g, \ide{p}}^* = \varepsilon_g^{-1} \chi_{\mathrm{cycl}}^{1+k'}$. Let $\sigma_1$ and $\sigma_2$ denote the reductions modulo $\ide{p}$ of $\rho_{f, \ide{p}}^*$ and $\rho_{g, \ide{p}}^*$ respectively. 

Then it is shown in \cite{LoefflerBI} that, for a very large amount of primes $\ide{p}$, $(\sigma_1, \sigma_2)$ is a good pair.\footnote{The Galois representations considered in \cite{LoefflerBI} are actually $\rho_{f, \ide{p}}$ and $\rho_{g, \ide{p}}$ but the results easily carry over to our situation.} In particular, for all but finitely many $\ide{p}$ which split completely in $L/\mbb{Q}$ the triple $(\sigma_1, \sigma_2, u)$ is a good triple for any $u \in (\mbb{Z}/N\mbb{Z})^{\times}$, where $N = 4 \opn{lcm}(N_1, N_2)$. 

Now suppose that $\invs{F}$ and $\invs{G}$ are two Coleman families over open affinoids $V_1, V_2$ passing through $p$-stabilisations of $f$ and $g$ respectively. Let 
\[
M = M_{V_1}(\invs{F})^* \hatot M_{V_2}(\invs{G})^*
\]
be the tensor product of the Galois representations attached to $\invs{F}$ and $\invs{G}$ and take $V_1$ and $V_2$ to be small enough such that $M$ is constant modulo $p$. Note that the representations 
\begin{eqnarray}
\rho_{\invs{F}}^* \colon G_{\mbb{Q}} \rightarrow \opn{GL}(M_{V_1}(\invs{F})^*) \nonumber \\
\rho_{\invs{G}}^* \colon G_{\mbb{Q}} \rightarrow \opn{GL}(M_{V_2}(\invs{G})^*) \nonumber
\end{eqnarray}
satisfy $\opn{det} \circ \rho_{\invs{F}}^*(g) = \varepsilon_{\invs{F}}^{-1}(g)$ and $\opn{det} \circ \rho_{\invs{G}}^*(g) = \varepsilon_{\invs{G}}^{-1}(g)$, for all $g \in G_{\mbb{Q}(\mu_{p^{\infty}})}$. In particular, we can shrink $V_1$ and $V_2$ so that $\varepsilon_{\invs{F}_{k_1}} = \varepsilon_f$ and $\varepsilon_{\invs{G}_{k_2}} = \varepsilon_g$ for all specialisations $\mathbf{x} = (k_1, k_2) \in V_1 \times V_2$. 

Then, assuming $\varepsilon_f(u)\varepsilon_g(u) \not\equiv 1$ modulo $p$, by Lemma \ref{SMainTheorem} we have that the (BI) condition holds for the representation $M_{\mathbf{x}}$ for any specialisation at $\mathbf{x} \in V_1 \times V_2$.

\subsection{The ``flatness of inertia'' and ``minimally ramified'' hypotheses} \label{JustificationForFI}

An example of a pair of modular forms $f$ and $g$ that satisfy the ``flatness of inertia'' and ``minimally ramified'' hypotheses are as follows. Let $\ell_1$ and $\ell_2$ be two distinct primes $\geq 7$ both different from $p$, and let $f$ and $g$ be two normalised cuspidal new eigenforms of levels $\Gamma_1(\ell_1)$ and $\Gamma_1(\ell_2)$, weights $k+2$ and $k'+2$, and characters $\varepsilon_f = \varepsilon_1$ and $\varepsilon_g = \varepsilon_2$ respectively. Suppose that $\varepsilon_1$ and $\varepsilon_2$ are both non-trivial modulo $p$. Let $E$ be a $p$-adic field containing $a_n(f)$, $a_n(g)$ and the images of $\varepsilon_1$ and $\varepsilon_2$, and suppose that $a_{\ell_1}(f)$ and $a_{\ell_2}(g)$ are both non-zero. 

Let $\rho_1$ and $\rho_2$ denote the restriction of $M_E(f)$ and $M_E(g)$ to the inertia group at $\ell_1$ and $\ell_2$ respectively. By \cite[\S 5]{localcomponent}, the local components at $\ell_i$ of the automorphic representations associated to $f$ and $g$ are prinicipal series representations; therefore by the local Langlands correspondence (and local-global compatibility) we have 
\[
\rho_i \cong \mathbf{1} \oplus \varepsilon_i^{-1}
\]
where $\mathbf{1}$ is the trivial character.  

Let $\invs{F}$ and $\invs{G}$ be Coleman families over $V_1$ and $V_2$ passing through $p$-stabilisations of $f$ and $g$ respectively. Let $M$ denote the representation $M_{V_1}(\invs{F})^* \hatot M_{V_2}(\invs{G})^*$ and let $M_i$ denote the restriction of $M$ to the inertia group $I_{\ell_i}$.

Since inertial types are locally constant, we can shrink $V_1$ and $V_2$ so that for every classical weight $\mathbf{k} = (k_1, k_2) \in V_1 \times V_2$ the specialisation of $M$ satisfies 
\[
M_{i, \mathbf{k}} \cong \mathbf{1} \oplus \mathbf{1} \oplus \varepsilon_i \oplus \varepsilon_i
\]
for $i=1, 2$. It is not hard to see that the action of $I_{\ell_i}$ on $M$ factors through a finite quotient isomorphic to $\left(\mbb{Z}/\ell_i \mbb{Z}\right)^{\times}$ (it is true on a Zariski dense subset) and that $M_i$ must decompose as 
\[
M_i \cong \mathbf{1} \oplus \mathbf{1} \oplus \varepsilon_i \oplus \varepsilon_i.
\]
Indeed the action factors though a finite group and we can define idempotents corresponding to each direct summand. Taking $\Sigma = \{\ell_1, \ell_2, p, \infty \}$ we see that the ``flatness of inertia'' and ``minimally ramified'' hypotheses hold for $f$ and $g$ (provided that $V_1$ and $V_2$ are small enough). 

A similar argument can be applied if either (or both) of $\varepsilon_i$ are trivial, except now the local component can be an unramified twist of the Steinberg representation and the action of inertia factors through a (not necessarily finite) abelian quotient. However we are primarily interested in the case $\varepsilon_1 \cdot \varepsilon_2 \neq 1$ anyway, otherwise the ``Big Image'' hypothesis would not hold for the representation $M_E(f)^* \otimes M_E(g)^*$.


\renewcommand{\MR}[1]{}
\providecommand{\bysame}{\leavevmode\hbox to3em{\hrulefill}\thinspace}
\providecommand{\MR}{\relax\ifhmode\unskip\space\fi MR }
\providecommand{\MRhref}[2]{%
  \href{http://www.ams.org/mathscinet-getitem?mr=#1}{#2}
}
\providecommand{\href}[2]{#2}


\Addresses


\begin{thebibliography}{KPX14}

\bibitem[AI17]{AIFamilies}
Fabrizio {Andreatta} and Adrian {Iovita}, \emph{{Triple product p-adic
  L-functions associated to finite slope p-adic families of modular forms, with
  an appendix by Eric Urban}}, arXiv e-prints (2017),
  \href{https://arxiv.org/abs/1708.02785}{arXiv:1708.02785}.

\bibitem[BC08]{Berger-Colmez}
L.~Berger and P.~Colmez, \emph{{F}amilles de repr\'{e}sentations de de {R}ham
  et monodromie $p$-adique}, {R}epr\'{e}sentations $p$-adiques de groupes
  $p$-adiques {I}: {R}epr\'{e}sentations galoisiennes et $(\varphi,
  \Gamma)$-modules, vol. 319, Ast\'{e}risque, 2008, pp.~303 -- 337.

\bibitem[Bel12]{bellpadic}
Jo\"{e}l Bella\"{i}che, \emph{Critical {$p$}-adic {$L$}-functions}, Invent.
  Math. \textbf{189} (2012), no.~1, 1--60,
  \url{https://doi.org/10.1007/s00222-011-0358-z}. \MR{2929082}

\bibitem[Ber02]{berger2}
Laurent Berger, \emph{Repr\'{e}sentations {$p$}-adiques et \'{e}quations
  diff\'{e}rentielles}, Invent. Math. \textbf{148} (2002), no.~2, 219--284,
  \url{https://doi.org/10.1007/s002220100202}. \MR{1906150}

\bibitem[Ber03]{berger3}
\bysame, \emph{Bloch and {K}ato's exponential map: three explicit formulas},
  Doc. Math. (2003), no.~Extra Vol., 99--129,
  \url{http://eudml.org/doc/124694}. \MR{2046596}

\bibitem[Col96]{coleman-overconvergent}
Robert~F. Coleman, \emph{Classical and overconvergent modular forms}, Invent.
  Math. \textbf{124} (1996), no.~1-3, 215--241. \MR{1369416}

\bibitem[Col08]{ColmezRepTrianguline}
Pierre Colmez, \emph{{R}epr\'{e}sentations triangulines de dimension 2},
  Ast\'{e}risque (2008), no.~319, 213--258.

\bibitem[Col10]{colmez-fonctions}
\bysame, \emph{Fonctions d'une variable {$p$}-adique}, Ast\'erisque
  \textbf{330} (2010), 13--59. \MR{2642404}

\bibitem[Jan88]{Jannsen1988}
Uwe Jannsen, \emph{Continuous \'{e}tale cohomology}, Math. Ann. \textbf{280}
  (1988), no.~2, 207--245, \url{https://doi.org/10.1007/BF01456052}.
  \MR{929536}

\bibitem[Kin16]{Kings2015}
Guido Kings, \emph{On {$p$}-adic interpolation of motivic {E}isenstein
  classes}, Elliptic curves, modular forms and {I}wasawa theory, Springer Proc.
  Math. Stat., vol. 188, Springer, Cham, 2016, pp.~335--371. \MR{3629656}

\bibitem[KL10]{familiesphiGamma}
Kiran~S. Kedlaya and Ruochuan Liu, \emph{On families of $(\varphi,
  {\Gamma})$-modules}, Algebra Number Theory \textbf{4} (2010), no.~7,
  943--967, \url{https://doi.org/10.2140/ant.2010.4.943}.

\bibitem[KLZ15]{KLZ-modular}
Guido Kings, David Loeffler, and Sarah~Livia Zerbes, \emph{{Rankin--Eisenstein
  classes for modular forms}}, Amer. J. Math. (2015), to appear,
  \url{https://arxiv.org/abs/1501.03289}.

\bibitem[KLZ17]{KLZ2015}
\bysame, \emph{Rankin-{E}isenstein classes and explicit reciprocity laws},
  Camb. J. Math. \textbf{5} (2017), no.~1, 1--122,
  \url{https://doi.org/10.4310/CJM.2017.v5.n1.a1}. \MR{3637653}

\bibitem[KPX14]{kpx}
Kiran~S. Kedlaya, Jonathan Pottharst, and Liang Xiao, \emph{Cohomology of
  arithmetic families of {$(\varphi,\Gamma)$}-modules}, J. Amer. Math. Soc.
  \textbf{27} (2014), no.~4, 1043--1115,
  \url{https://doi.org/10.1090/S0894-0347-2014-00794-3}. \MR{3230818}

\bibitem[Lan95]{lang-modular}
Serge Lang, \emph{Introduction to modular forms}, Grundlehren der
  Mathematischen Wissenschaften [Fundamental Principles of Mathematical
  Sciences], vol. 222, Springer-Verlag, Berlin, 1995, With appendixes by D.
  Zagier and Walter Feit, Corrected reprint of the 1976 original. \MR{1363488}

\bibitem[Liu08]{Liuduality}
Ruochuan Liu, \emph{Cohomology and duality for {$(\phi,\Gamma)$}-modules over
  the {R}obba ring}, Int. Math. Res. Not. IMRN (2008), no.~3, Art. ID rnm150,
  32, \url{https://doi.org/10.1093/imrn/rnm150}. \MR{2416996}

\bibitem[Liu15]{liu}
R.~Liu, \emph{Triangulation of refined families}, Comment. Math. Helv.
  \textbf{90} (2015), no.~4, 831--904, \url{https://doi.org/10.4171/CMH/372}.
  \MR{3433281}

\bibitem[LLZ11]{LLZ-coleman-map}
Antonio Lei, David Loeffler, and Sarah Zerbes, \emph{Coleman maps and the
  p-adic regulator}, Algebra \& Number Theory \textbf{5} (2011), no.~8,
  1095--1131, \url{https://doi.org/10.2140/ant.2011.5.1095}.

\bibitem[LLZ14]{BeilinsonFlach}
Antonio Lei, David Loeffler, and Sarah~Livia Zerbes, \emph{Euler systems for
  {R}ankin-{S}elberg convolutions of modular forms}, Ann. of Math. (2)
  \textbf{180} (2014), no.~2, 653--771,
  \url{https://doi.org/10.4007/annals.2014.180.2.6}. \MR{3224721}

\bibitem[Loe17]{LoefflerBI}
David Loeffler, \emph{{I}mages of adelic {G}alois representations for modular
  forms}, Glasg. Math. J. \textbf{59} (2017), no.~1, 11--25,
  \url{https://doi.org/10.1017/s0017089516000367}.

\bibitem[Loe18]{LoefflerRS}
\bysame, \emph{A {N}ote on p-adic {R}ankin--{S}elberg {L}-functions}, Canadian
  Mathematical Bulletin \textbf{61} (2018), no.~3, 608–621,
  \url{https://doi.org/10.4153/CMB-2017-047-9}.

\bibitem[LW12]{localcomponent}
David Loeffler and Jared Weinstein, \emph{On the computation of local
  components of a newform}, Math. Comp. \textbf{81} (2012), no.~278,
  1179--1200, \url{https://doi.org/10.1090/s0025-5718-2011-02530-5}.

\bibitem[LZ16]{lz-coleman}
David Loeffler and Sarah~Livia Zerbes, \emph{Rankin-{E}isenstein classes in
  {C}oleman families}, Res. Math. Sci. \textbf{3} (2016), Paper No. 29, 53,
  \url{https://doi.org/10.1186/s40687-016-0077-6}. \MR{3552987}

\bibitem[Nak14]{Nakamura2014}
Kentaro Nakamura, \emph{Iwasawa theory of de {R}ham
  {$(\varphi,\Gamma)$}-modules over the {R}obba ring}, J. Inst. Math. Jussieu
  \textbf{13} (2014), no.~1, 65--118,
  \url{https://doi.org/10.1017/S1474748013000078}. \MR{3134016}

\bibitem[Nek06]{nekovar-selmer}
Jan Nekov\'a\v{r}, \emph{Selmer complexes}, Ast\'erisque (2006), no.~310,
  viii+559. \MR{2333680}

\bibitem[Pot12]{pottc}
Jonathan Pottharst, \emph{Cyclotomic {I}wasawa theory of motives}.

\bibitem[Pot13]{pottharst-selmer}
\bysame, \emph{Analytic families of finite-slope {S}elmer groups}, Algebra
  Number Theory \textbf{7} (2013), no.~7, 1571--1612,
  \url{https://doi.org/10.2140/ant.2013.7.1571}. \MR{3117501}

\bibitem[PR00]{PerrinRiou}
Bernadette Perrin-Riou, \emph{{$p$}-adic {$L$}-functions and {$p$}-adic
  representations}, SMF/AMS Texts and Monographs, vol.~3, American Mathematical
  Society, Providence, RI; Soci\'{e}t\'{e} Math\'{e}matique de France, Paris,
  2000, Translated from the 1995 French original by Leila Schneps and revised
  by the author. \MR{1743508}

\bibitem[Rub00]{Rubin}
Karl Rubin, \emph{Euler systems}, Annals of Mathematics Studies, vol. 147,
  Princeton University Press, Princeton, NJ, 2000,
  \url{https://doi.org/10.1515/9781400865208}. \MR{1749177}

\bibitem[Urb14]{Urban}
Eric Urban, \emph{Nearly overconvergent modular forms}, Iwasawa Theory 2012
  (Berlin, Heidelberg), Springer Berlin Heidelberg, 2014, pp.~401--441.

\end{thebibliography}
\end{document}